\numberwithin{equation}{section}
\newcommand{\N}{\mathbb{N}}
\newcommand{\R}{\mathbb{R}}
\newcommand{\B}{\mathbb{B}}
\newcommand{\sfd}{{\sf d}}
\renewcommand{\d}{{\mathrm d}}
\newcommand{\X}{{\rm X}}
\newcommand{\Y}{{\rm Y}}
\newcommand{\mm}{\mathfrak{m}}
\newcommand{\1}{\mathbbm 1}
\newcommand{\limi}{\varliminf}
\newcommand{\lims}{\varlimsup}
\newcommand{\fr}{\penalty-20\null\hfill\(\blacksquare\)}
\newtheorem{theorem}{Theorem}[section]
\newtheorem{lemma}[theorem]{Lemma}
\newtheorem{proposition}[theorem]{Proposition}
\newtheorem{definition}[theorem]{Definition}
\newtheorem{remark}[theorem]{Remark}
\title{On the reflexivity properties of Banach bundles and Banach modules}
\author[Milica Lu\v{c}i\'{c}]{Milica Lu\v{c}i\'{c}}
\address[Milica Lu\v{c}i\'{c}]{Department of Mathematics and Informatics, Faculty of Sciences, University of Novi Sad,
Trg D.\ Obradovi\'{c}a 4, 21000 Novi Sad, Serbia}
\email{milica.lucic@dmi.uns.ac.rs}
\author[Enrico Pasqualetto]{Enrico Pasqualetto}
\address[Enrico Pasqualetto]{Scuola Normale Superiore, Piazza dei Cavalieri 7, 56126 Pisa, Italy}
\email{enrico.pasqualetto@sns.it}
\author[Ivana Vojnovi\'{c}]{Ivana Vojnovi\'{c}}
\address[Ivana Vojnovi\'{c}]{Department of Mathematics and Informatics, Faculty of Sciences, University of Novi Sad,
Trg D.\ Obradovi\'{c}a 4, 21000 Novi Sad, Serbia}
\email{ivana.vojnovic@dmi.uns.ac.rs}
\begin{document}
\date{\today}
\keywords{Banach bundle, normed module, uniform convexity, reflexivity}
\subjclass[2020]{18F15, 53C23}
\begin{abstract}
In this paper we investigate some reflexivity-type properties of separable measurable Banach bundles
over a \(\sigma\)-finite measure space. Our two main results are the following:
\begin{itemize}
\item The fibers of a bundle are uniformly convex (with a common modulus of convexity) if and only
if the space of its \(L^p\)-sections is uniformly convex for every \(p\in(1,\infty)\).
\item The fibers of a bundle are reflexive if and only if the space of its \(L^p\)-sections is reflexive.
\end{itemize}
These results generalise the well-known corresponding ones for Lebesgue--Bochner spaces.
\end{abstract}
\maketitle
\tableofcontents
\section{Introduction}
\subsection*{General overview}
In this paper, we focus on the theory of measurable Banach bundles over a given \(\sigma\)-finite measure space \((\X,\Sigma,\mm)\).
Our primary aim is to understand whether some important properties of the fibers of a measurable Banach bundle (such as Hilbertianity,
uniform convexity, and reflexivity) carry over to the space of its \(L^p\)-sections, and vice versa.
\medskip

Given an `ambient' Banach space \(\B\), a weakly measurable multivalued map \(\mathbf{E}\colon\X\twoheadrightarrow\B\) is said
to be a \emph{Banach \(\B\)-bundle} on \(\X\) if \(\mathbf{E}(x)\) is a closed linear subspace of \(\B\) for every \(x\in\X\).
A strongly measurable map \(v\colon\X\to\B\) such that \(v(x)\in\mathbf{E}(x)\) for every \(x\in\X\) is called a \emph{section} of \(\mathbf{E}\).
For any exponent \(p\in(1,\infty)\), we denote by \(\Gamma_p(\mathbf{E})\) the space of (equivalence classes, up to \(\mm\)-a.e.\ equality, of)
those sections of \(\mathbf{E}\) for which \(\X\ni x\mapsto\|v(x)\|_\B\in\R\) belongs to \(L^p(\mm)\). It is worth pointing out that the well-known
concept of \emph{Lebesgue--Bochner space} \(L^p(\mm;\B)\) is a particular instance of a section space, corresponding to
the bundle constantly equal to \(\B\).
\medskip

The space \(\Gamma_p(\mathbf{E})\) naturally comes with a pointwise multiplication by \(L^\infty(\mm)\)-functions and with a
\emph{pointwise norm} operator \(|\cdot|\colon\Gamma_p(\mathbf{E})\to L^p(\mm)\), given by \(|v|\coloneqq\big\|v(\cdot)\|_\B\).
Furthermore, the function \(\Gamma_p(\mathbf{E})\ni v\mapsto\|v\|_{\Gamma_p(\mathbf{E})}\coloneqq\big\||v|\big\|_{L^p(\mm)}\in[0,+\infty)\)
defines a complete norm on \(\Gamma_p(\mathbf{E})\). All in all, \(\Gamma_p(\mathbf{E})\) is an \emph{\(L^p(\mm)\)-normed \(L^\infty(\mm)\)-module},
in the sense of Gigli \cite{Gigli14}. We remark that, more surprisingly, in the case of separable normed modules the converse implication holds
as well: every separable \(L^p(\mm)\)-normed \(L^\infty(\mm)\)-module \(\mathscr M\) is isomorphic to \(\Gamma_p(\mathbf{E})\), for some measurable
Banach \(\B\)-bundle \(\mathbf{E}\) on \(\X\), where \(\B\) is a separable Banach space. This representation result -- first obtained
in \cite{LP18} for `locally finitely-generated' modules and later generalised in \cite{DMLP21} to all separable modules -- in fact strongly
motivates our interest towards the language of measurable Banach bundles.
\medskip

The theory of \(L^p(\mm)\)-normed \(L^\infty(\mm)\)-modules was introduced by Gigli in \cite{Gigli14} -- as already mentioned -- and
refined further in \cite{Gigli17}. The main purpose was to provide a robust functional-analytic framework, suitable for constructing
effective notions of \emph{\(1\)-forms} and \emph{vector fields} in the setting of metric measure spaces. The key object introduced in \cite{Gigli14}
is the \emph{cotangent module} \(L^2(T^*\X)\), which is obtained, roughly speaking, as the completion of the \(L^\infty(\mm)\)-linear combinations
of the `formal differentials' \(\d f\) of Sobolev functions \(f\in W^{1,2}(\X)\). It is evident that it is not sufficient to consider only the Banach
space structure of \(L^2(T^*\X)\), but instead one has to keep track also of the `pointwise' behaviour of the elements of \(L^2(T^*\X)\),
which is encoded into the \(L^\infty(\mm)\)-module structure and the pointwise norm. Due to this reason, \(L^p(\mm)\)-normed \(L^\infty(\mm)\)-modules
were the correct class of spaces to take into account. In this regard, an enlightening side result -- which is not strictly needed for the
purposes of this paper, but that we report for future reference -- will be discussed in Appendix \ref{app:criterion_nmod}. More precisely,
in Theorem \ref{thm:when_norm_ind_ptwse_norm} we will characterise those complete norms over a given \(L^\infty(\mm)\)-module that are induced by
an \(L^p(\mm)\)-valued pointwise norm operator via integration.
\medskip

Prior to the development of \(L^p\)-normed \(L^\infty\)-modules on metric measure spaces, some strictly related notions were already
well-established in the literature, for instance \emph{randomly normed spaces} \cite{HLR91} or \emph{random normed modules} \cite{Guo-1995},
which are typically formulated over a probability measure space. In view of this fact, we will work in the general framework of
normed modules over a \(\sigma\)-finite measure space. The notion of a random normed module is an important concept in random metric theory,
which is derived from the investigation of probabilistic metric spaces. A key construction in this theory is that of a random conjugate space.
The random metric theory has applications in finance optimisation problems, and it is connected with the study of conditional and dynamic risk
measures. See \cite{Guo-2011} and the references therein.
\subsection*{Statement of results}
Let us now describe more in details the main results that we will achieve in this paper. Fix a \(\sigma\)-finite measure space \((\X,\Sigma,\mm)\),
a separable Banach space \(\B\), and a measurable Banach \(\B\)-bundle \(\mathbf{E}\) on \(\X\). Then we will prove the following statements:
\begin{itemize}
\item[\(\rm a)\)] \emph{\(\mathbf{E}(x)\) is Hilbert for \(\mm\)-a.e.\ \(x\in\X\) if and only if \(\Gamma_2(\mathbf{E})\) is Hilbert.}
See Theorem \ref{thm:Hilb_bundles_mod}.
\item[\(\rm b)\)] \emph{\(\mathbf{E}(x)\) is uniformly convex for \(\mm\)-a.e.\ \(x\in\X\) (and with modulus of convexity independent of \(x\))
if and only if \(\Gamma_p(\mathbf{E})\) is uniformly convex for all \(p\in(1,\infty)\)}. See Theorem \ref{thm:unif_conv_bundles_mod}. Its proof
is more involved than the one for the Hilbertian case, and relies upon some previous results about \emph{random uniform convexity} by Guo and Zeng
\cite{GZRNM,GZRNM2}. The corresponding statement for Lebesgue--Bochner spaces can be found, \emph{e.g.}, in \cite{UCpaper}.
\item[\(\rm c)\)] \emph{\(\mathbf{E}(x)\) is reflexive for \(\mm\)-a.e.\ \(x\in\X\) if and only if \(\Gamma_p(\mathbf{E})\) is reflexive for all
\(p\in(1,\infty)\).} See Theorem \ref{thm:reflex_impl}.
\end{itemize}
The above results are well-known in the special case of Lebesgue--Bochner spaces. We point out that the implication `\(L^p(\mm;\B)\) reflexive
implies \(\B\) reflexive' can be easily proved: assuming \(\mm(\X)=1\) for simplicity, one can realise \(\B\) as a closed linear subspace of
\(L^p(\mm;\B)\) (by sending each \(v\in\B\) to the section constantly equal to \(v\)). However, the corresponding implication
`\(\Gamma_p(\mathbf{E})\) reflexive implies \(\mathbf{E}(x)\) reflexive for \(\mm\)-a.e.\ \(x\)' will require a much more difficult proof.
\medskip

We also mention that, along the way to prove item c), we will obtain a result of independent interest: shortly said, given a measurable Banach
\(\B\)-bundle \(\mathbf{E}\) (with \(\B\) not necessarily separable), the dual of \(\Gamma_p(\mathbf{E})\) as a normed module can be
identified with the space of \(q\)-integrable \emph{weakly\(^*\) measurable} sections of the dual bundle \(\X\ni x\mapsto\mathbf{E}(x)'\),
where \(\frac{1}{p}+\frac{1}{q}=1\). See Section \ref{s:dual_sections} for the precise formulation, as well as Theorem \ref{thm:char_dual}
for the relevant equivalence result. The corresponding statement for Lebesgue--Bochner spaces, stating that
\(L^p(\mm;\B)'\) can be identified with the space \(L^q_{w^*}(\mm;\B')\) of \(q\)-integrable `weakly\(^*\) measurable' maps
from \((\X,\Sigma,\mm)\) to \(\B'\), was previously known (see, \emph{e.g.}, \cite{dunford1958linear}). We also point out
that a variant of the statement in c) for normed modules has been recently obtained in \cite[Theorems 3.9 and 4.17]{GLP22}.
However, in general neither the results of \cite{GLP22} imply c), nor the vice versa.
\subsection*{Addendum}
While in a previous version of this manuscript only one of the two implications in c) was obtained (namely, that `reflexive fibers
implies reflexive section space'), in the current version the full equivalence is proved. This is due to the fact that an anonymous
colleague kindly pointed out to us the result \cite[Theorem 6.19]{HLR91}, which is the analogue of c) in the setting of \emph{direct integrals}.
However, we do not obtain the implication `reflexive section space implies reflexive fibers' as a consequence of \cite[Theorem 6.19]{HLR91},
but we rather follow the same proof strategy; see Remark \ref{rmk:on_proof_refl} for more comments on this. It would be very interesting
-- but outside the scopes of this manuscript -- to investigate the relation between our notion of Banach bundle
and the theory of direct integrals considered in \cite{HLR91}.
\subsection*{Acknowledgements}
The authors thank Nicola Gigli for having suggested the proof of Theorem \ref{thm:char_dual}.
The first and the third named authors acknowledge the financial support of the Ministry of Education,
Science and Technological Development of the Republic of Serbia (Grant No.\ 451-03-68/2022-14/200125).
The second named author acknowledges the support by the Balzan project led by Luigi Ambrosio. The third author acknowledges financial support of the Croatian Science Foundation under the project 2449 MiTPDE.
\section{Preliminaries}
To begin with, we fix some general terminology, which we will use throughout the entire paper. For any \(p\in[1,\infty]\),
we tacitly denote by \(q\in[1,\infty]\) its \textbf{conjugate exponent}, \emph{i.e.},
\[
\frac{1}{p}+\frac{1}{q}=1.
\]
Given a \(\sigma\)-finite measure space \((\X,\Sigma,\mm)\), we denote by \(\mathcal L^0_{\rm ext}(\Sigma)\) the space of all measurable
functions from \(\X\) to \(\R\cup\{\pm\infty\}\), while \(L^0_{\rm ext}(\mm)\) stands for the quotient of \(\mathcal L^0_{\rm ext}(\Sigma)\) up to
\(\mm\)-a.e.\ equality. We denote by \(\pi_\mm\colon\mathcal L^0_{\rm ext}(\Sigma)\to L^0_{\rm ext}(\mm)\) the usual projection map on the quotient.
Moreover, we define \(\mathcal L^0(\Sigma)\coloneqq\big\{f\in\mathcal L^0_{\rm ext}(\Sigma)\,:\,f(\X)\subseteq\R\big\}\) and
\(L^0(\mm)\coloneqq\pi_\mm\big(\mathcal L^0(\Sigma)\big)\). During the paper we will use two different notions of `essential supremum/infimum', namely:
\begin{itemize}
\item Given any \(f\in\mathcal L^0_{\rm ext}(\Sigma)\), we define \({\rm ess\,sup}_\X f,{\rm ess\,inf}_\X f\in\R\cup\{\pm\infty\}\) respectively as
\[\begin{split}
\underset{\X}{\rm ess\,sup}\,f&\coloneqq\inf\Big\{\lambda\in\R\cup\{\pm\infty\}\;\Big|\;f\leq\lambda,\text{ holds }\mm\text{-a.e.\ on }\X\Big\},\\
\underset{\X}{\rm ess\,inf}\,f&\coloneqq\sup\Big\{\lambda\in\R\cup\{\pm\infty\}\;\Big|\;f\geq\lambda,\text{ holds }\mm\text{-a.e.\ on }\X\Big\}.
\end{split}\]
\item Given a (possibly uncountable) family \(\{f_i\}_{i\in I}\subseteq\mathcal L^0_{\rm ext}(\Sigma)\), we define
\(\bigvee_{i\in I}f_i\in L^0_{\rm ext}(\mm)\) as the unique \(f\in L^0_{\rm ext}(\mm)\)
such that \(f_i\leq f\) \(\mm\)-a.e.\ for every \(i\in I\) and satisfying
\[
g\in L^0_{\rm ext}(\mm),\;\;f_i\leq g\;\,\mm\text{-a.e.\ for every }i\in I\quad\Longrightarrow\quad f\leq g\;\,\mm\text{-a.e.}.
\]
Similarly, \(\bigwedge_{i\in I}f_i\in L^0_{\rm ext}(\mm)\) is the unique element \(f\) of \(L^0_{\rm ext}(\mm)\)
such that \(f_i\geq f\) \(\mm\)-a.e.\ for every \(i\in I\) and satisfying
\[
g\in L^0_{\rm ext}(\mm),\;\;f_i\geq g\;\,\mm\text{-a.e.\ for every }i\in I\quad\Longrightarrow\quad f\geq g\;\,\mm\text{-a.e.}.
\]
\end{itemize}
Notice that the above notions of essential supremum/infimum are invariant under modifications of the functions \(f\) and \(f_i\) on an
\(\mm\)-negligible set, thus accordingly we can unambiguously consider \({\rm ess\,sup}_\X f\), \({\rm ess\,inf}_\X f\), \(\bigvee_{i\in I}f_i\),
\(\bigwedge_{i\in I}f_i\) whenever \(f\) and \(f_i\) are elements of \(L^0_{\rm ext}(\mm)\).
\medskip

The \textbf{Lebesgue spaces} are defined in the usual way: first, given any \(p\in[1,\infty)\), we define
\[
\mathcal L^p(\mm)\coloneqq\bigg\{f\in\mathcal L^0(\Sigma)\;\bigg|\;\int|f|^p\,\d\mm<+\infty\bigg\},
\qquad\mathcal L^\infty(\mm)\coloneqq\bigg\{f\in\mathcal L^0(\Sigma)\;\bigg|\;\sup_\X|f|<+\infty\bigg\}.
\]
Moreover, we consider the quotient spaces \(L^p(\mm)\coloneqq\pi_\mm\big(\mathcal L^p(\mm)\big)\) and
\(L^\infty(\mm)\coloneqq\pi_\mm\big(\mathcal L^\infty(\mm)\big)\), which are Banach spaces if endowed with the usual pointwise operations and with the norms
\[
\|f\|_{L^p(\mm)}\coloneqq\bigg(\int|f|^p\,\d\mm\bigg)^{1/p},\qquad\|f\|_{L^\infty(\mm)}\coloneqq\underset{\X}{\rm ess\,sup}\,|f|,
\]
respectively. Recall also that \(L^q(\mm)\) is isomorphic as a Banach space to the dual of \(L^p(\mm)\).
\medskip

It is worth recalling that, given an arbitrary \(\sigma\)-finite measure space \((\X,\Sigma,\mm)\) and any exponent \(p\in[1,\infty)\),
the Lebesgue space \(L^p(\mm)\) is not necessarily separable. In fact, it holds
\begin{equation}\label{eq:equiv_meas_sep}
(\X,\Sigma,\mm)\text{ is separable }\quad\Longleftrightarrow\quad\text{ }L^p(\mm)\text{ is separable, for every }p\in[1,\infty),
\end{equation}
where \((\X,\Sigma,\mm)\) is said to be \textbf{separable} provided there exists a countable family \(\mathcal C\subseteq\Sigma\)
for which the following property holds: given any set \(E\in\Sigma\) with \(\mm(E)<+\infty\) and \(\varepsilon>0\), there exists \(F\in\mathcal C\)
such that \(\mm(E\Delta F)<\varepsilon\). The equivalence stated in \eqref{eq:equiv_meas_sep} is well-known; it follows, for instance,
from \cite[Lemma 2.14]{DMLP21}. We also point out that if \((\X,\sfd)\) is a complete and separable metric space, \(\Sigma\) is the Borel
\(\sigma\)-algebra of \(\X\), and \(\mm\) is a boundedly-finite Borel measure on \(\X\), then \((\X,\Sigma,\mm)\) is a separable measure space.
\subsection{Banach spaces}
Let us begin by fixing some basic terminology about Banach spaces. Given a Banach space \(\B\),  we denote by \(\B'\) its (continuous) dual space.
Moreover, we denote by \(B_\B\) and \(\mathbb S_\B\) the \textbf{closed unit ball} and the \textbf{unit sphere} of \(\B\), respectively.
Namely, we set
\[
 B_\B\coloneqq\big\{v\in\B\;\big|\;\|v\|_\B\leq 1\big\},\qquad\mathbb S_\B\coloneqq\big\{v\in\B\;\big|\;\|v\|_\B=1\big\}.
\]
In this paper we are mostly concerned with Hilbert, uniformly convex, and reflexive spaces. We recall the notion of uniform
convexity, just to fix a notation for the modulus of convexity.
\begin{definition}[Uniform convexity]
Let \(\B\) be a Banach space. Let us define the \textbf{modulus of convexity} \(\delta_\B\colon(0,2]\to[0,1]\) of the space \(\B\) as follows:
given any \(\varepsilon\in(0,2]\), we set
\[
\delta_\B(\varepsilon)\coloneqq\inf\bigg\{1-\bigg\|\frac{v+w}{2}\bigg\|_\B\;\bigg|\;v,w\in\mathbb S_\B,\,\|v-w\|_\B\geq\varepsilon\bigg\}.
\]
Then we say that \(\B\) is \textbf{uniformly convex} if and only if \(\delta_\B(\varepsilon)>0\) holds for every \(\varepsilon\in(0,2]\).
\end{definition}
It is well-known that the following implications are verified:
\[
\B\text{ is Hilbert}\quad\Longrightarrow\quad\B\text{ is uniformly convex}\quad\Longrightarrow\quad\B\text{ is reflexive.}
\]
The following elementary observation will play a r\^{o}le during the proof of Theorem \ref{thm:unif_conv_bundles_mod}.
\begin{remark}\label{rmk:check_unif_conv_on_dense}{\rm
The uniform convexity condition can be checked on a dense set. Namely, given any dense subset \(D\) of \(\mathbb S_\B\), one has that
for every \(\varepsilon\in(0,2]\) it holds that
\[
\delta_\B(\varepsilon)=\inf\bigg\{1-\bigg\|\frac{v+w}{2}\bigg\|_\B\;\bigg|\;v,w\in D,\,\|v-w\|_\B>\varepsilon\bigg\}.
\]
The proof of this claim can be easily obtained via a standard approximation argument.
\fr}\end{remark}
Let us briefly recall the basics of integration theory in the sense of Bochner. Fix a \(\sigma\)-finite measure space \((\X,\Sigma,\mm)\)
and a Banach space \(\B\). Let \(v\colon\X\to\B\) be given. Then we say that:
\begin{itemize}
\item \(v\) is \textbf{weakly measurable} if \(\X\ni x\mapsto\langle\omega,v(x)\rangle\in\R\) is measurable for every \(\omega\in\B'\).
\item \(v\) is \textbf{essentially separably valued} provided there exists an \(\mm\)-null set \(N\in\Sigma\) such that the image
\(v(\X\setminus N)\subseteq\B\) is separable.
\item \(v\) is \textbf{strongly measurable} if it is weakly measurable and essentially separably valued.
\end{itemize}
It is well-known that a weakly measurable map is strongly measurable if and only if there exists a sequence \((v_n)_{n\in\N}\) of simple maps
\(v_n\colon\X\to\B\) and an \(\mm\)-null set \(N\in\Sigma\) such that \(\lim_n\|v_n(x)-v(x)\|_\B=0\) holds for every \(x\in\X\setminus N\).
Here, by a \textbf{simple map} we mean a map \(w\colon\X\to\B\) which can be written as \(w=\sum_{i=1}^k\1_{A_i}w_i\), where
\((A_i)_{i=1}^k\subseteq\Sigma\) are pairwise disjoint with \(\mm(A_i)<\infty\) and \((w_i)_{i=1}^k\subseteq\B\).
\medskip

We denote by \(\mathcal L^0(\mm;\B)\) the space of all strongly measurable maps from \(\X\) to \(\B\), while for any given exponent
\(p\in[1,\infty)\) we define \(\mathcal L^p(\mm;\B)\coloneqq\big\{v\in\mathcal L^0(\mm;\B)\,:\,\int\|v(\cdot)\|_\B^p\,\d\mm<+\infty\big\}\),
and \(\mathcal L^\infty(\mm;\B)\coloneqq\big\{v\in\mathcal L^0(\mm;\B)\,:\,\sup_\X\|v(\cdot)\|_\B<+\infty\big\}\). These definitions are
well-posed, since \(\|v(\cdot)\|_\B\) is measurable thanks to the strong measurability of \(v\) and the continuity of \(\|\cdot\|_\B\).
We introduce an equivalence relation on \(\mathcal L^0(\mm;\B)\): given two elements \(v,w\in\mathcal L^0(\mm;\B)\), we declare that \(v\sim w\)
if and only if \(v(x)=w(x)\) for \(\mm\)-a.e.\ \(x\in\X\). Then we define
\[
L^0(\mm;\B)\coloneqq\mathcal L^0(\mm;\B)/\sim,
\]
while \(\pi_\mm\colon\mathcal L^0(\mm;\B)\to L^0(\mm;\B)\) stands for the projection map on the quotient. Moreover, we set
\(L^p(\mm;\B)\coloneqq\pi_\mm\big(\mathcal L^p(\mm;\B)\big)\) for every \(p\in[1,\infty]\). The linear space \(L^p(\mm;\B)\)
is a Banach space if endowed with the norm \(\|v\|_{L^p(\mm;\B)}\coloneqq\big\|\|v(\cdot)\|_\B\big\|_{L^p(\mm)}\). The spaces \(L^p(\mm;\B)\)
are called the \textbf{Lebesgue--Bochner spaces}. Note also that \(\mathcal L^p(\mm;\R)=\mathcal L^p(\mm)\) and \(L^p(\mm;\R)=L^p(\mm)\).
\medskip

Finally, we recall that a given strongly measurable map \(v\colon\X\to\B\) is said to be \textbf{Bochner integrable} on a set \(E\in\Sigma\)
provided \(\1_E\cdot v\in\mathcal L^1(\mm;\B)\). In this case, it is well-known that there exists a sequence \((v_n)_{n\in\N}\) of simple
maps \(v_n\colon\X\to\B\) such that \(\lim_n\int_E\|v_n(\cdot)-v(\cdot)\|_\B\,\d\mm=0\). In particular, it holds that the limit
\(\int_E v\,\d\mm\coloneqq\lim_n\int_E v_n\,\d\mm\in\B\) exists, where for any simple map \(w=\sum_{i=1}^k\1_{A_i}w_i\)
we set \(\int_E w\,\d\mm\coloneqq\sum_{i=1}^k\mm(A_i\cap E)w_i\in\B\). The element \(\int_E v\,\d\mm\), which is independent
of the specific choice of \((v_n)_n\), is called the \textbf{Bochner integral} of \(v\) on \(E\).
\subsection{Banach bundles}
Aim of this section is to recall the notion of Banach bundle introduced in \cite{DMLP21} and its main properties.
Let us fix a measurable space \((\X,\Sigma)\) and a Banach space \(\B\). By \(\boldsymbol\varphi\colon\X\twoheadrightarrow\B\)
we denote a \textbf{multivalued map}, \emph{i.e.}, a map from \(\X\) to the power set of \(\B\). Following \cite{AliprantisBorder99},
we say that \(\boldsymbol\varphi\) is \textbf{weakly measurable} provided \(\big\{x\in\X\,:\,\boldsymbol\varphi(x)\cap U\neq\varnothing\big\}\in\Sigma\)
holds for every open set \(U\subseteq\B\). The following definition is taken from \cite[Definition 4.1]{DMLP21} (cf.\ also with
\cite[Definition 2.15]{GLDDE} for the case of a non-separable ambient space \(\B\)):
\begin{definition}[Banach bundle]
Let \((\X,\Sigma)\) be a measurable space and \(\B\) a Banach space. Then a given weakly measurable multivalued map \(\mathbf E\colon\X\twoheadrightarrow\B\)
is said to be a \textbf{Banach \(\B\)-bundle} on \(\X\) provided \(\mathbf E(x)\) is a closed linear subspace of \(\B\) for every \(x\in\X\).
\end{definition}
Let us also introduce the following subclasses of Banach bundles, which will be studied in details in Sections \ref{s:Hilb_and_UC} and \ref{s:reflex}.
\begin{definition}[Hilbert, uniformly convex, and reflexive bundles]
Let \((\X,\Sigma,\mm)\) be a measure space, \(\B\) a Banach space, and \(\mathbf E\) a Banach \(\B\)-bundle over \(\X\). Then we say that:
\begin{itemize}
\item[\(\rm i)\)] \(\mathbf E\) is \textbf{Hilbert} provided \(\mathbf E(x)\) is Hilbert for \(\mm\)-a.e.\ \(x\in\X\).
\item[\(\rm ii)\)] \(\mathbf E\) is \textbf{uniformly convex} provided \(\mathbf E(x)\) is uniformly convex for \(\mm\)-a.e.\ \(x\in\X\).
\item[\(\rm iii)\)] \(\mathbf E\) is \textbf{reflexive} provided \(\mathbf E(x)\) is reflexive for \(\mm\)-a.e.\ \(x\in\X\).
\end{itemize}
\end{definition}

By a \textbf{section} of a Banach \(\B\)-bundle \(\mathbf E\) over \(\X\) we mean a measurable selector of \(\mathbf E\), \emph{i.e.},
a strongly measurable map \(v\colon\X\to\B\) with \(v(x)\in\mathbf E(x)\) for all \(x\in\X\). We denote by \(\bar\Gamma_0(\mathbf E)\)
the family of all sections of \(\mathbf E\). We introduce an equivalence relation on \(\bar\Gamma_0(\mathbf E)\):
given \(v,w\in\bar\Gamma_0(\mathbf E)\), we declare that \(v\sim w\) if and only if \(v(x)=w(x)\) for \(\mm\)-a.e.\ \(x\in\X\).
We then define
\[
\Gamma_0(\mathbf E)\coloneqq\bar\Gamma_0(\mathbf E)/\sim,
\]
while \(\pi_\mm\colon\bar\Gamma_0(\mathbf E)\to\Gamma_0(\mathbf E)\) stands for the projection map on the quotient.
By analogy with the case of Lebesgue--Bochner spaces, for any given exponent \(p\in[1,\infty]\) we define
\[
\bar\Gamma_p(\mathbf E)\coloneqq\bigg\{v\in\bar\Gamma_0(\mathbf E)\;\bigg|\;\|v(\cdot)\|_\B\in\mathcal L^p(\mm)\bigg\},
\qquad\Gamma_p(\mathbf E)\coloneqq\pi_\mm\big(\bar\Gamma_p(\mathbf E)\big).
\]
The previous definitions are well-posed, since the function \(\X\ni x\mapsto\|v(x)\|_\B\in\R\) is measurable thanks to the
strong measurability of \(v\) and the continuity of \(\|\cdot\|_\B\). One can readily check that \(\Gamma_p(\mathbf E)\) is
a Banach space if endowed with the pointwise operations and with the norm
\[
\|v\|_{\Gamma_p(\mathbf E)}\coloneqq\big\|\|v(\cdot)\|_\B\big\|_{L^p(\mm)},\quad\text{ for every }v\in\Gamma_p(\mathbf E).
\]
This is de facto a generalisation of Lebesgue--Bochner spaces: calling \(\B\) the Banach \(\B\)-bundle whose fibers
are constantly equal to \(\B\), it holds \(\bar\Gamma_p(\B)=\mathcal L^p(\mm;\B)\) and \(\Gamma_p(\B)=L^p(\mm;\B)\).
\begin{remark}{\rm
Consistently with the case of Lebesgue spaces, the space of sections \(\Gamma_p(\mathbf E)\) of a given Banach \(\B\)-bundle \(\mathbf E\) over \(\X\)
needs not be separable, even if \(\B\) is separable, \(\mm\) is \(\sigma\)-finite, and \(p\in[1,\infty)\). In fact, under the assumption that
the ambient space \(\B\) is separable, it holds
\[
\Gamma_p(\mathbf E)\text{ is separable, for every }p\in[1,\infty)\quad\Longleftrightarrow\quad(\X,\Sigma,\mm|_G)\text{ is separable},
\]
where \(G\coloneqq\big\{x\in\X\,:\,\mathbf E(x)\neq\{0_\B\}\big\}\). We omit the proof, similar to the one of \eqref{eq:equiv_meas_sep}.
\fr}\end{remark}
Hereafter, we shall focus on \(\sigma\)-finite measure spaces and Banach \(\B\)-bundles \(\mathbf E\) over \(\X\), where the space \(\B\) is separable.
We will need the following result, taken from \cite[Proposition 4.4]{DMLP21}.
\begin{proposition}\label{prop:spanning_sects}
Let \((\X,\Sigma,\mm)\) be a \(\sigma\)-finite measure space. Let \(\B\) be a separable Banach space and let \(\mathbf E\) be a Banach
\(\B\)-bundle over \(\X\). Let \(p\in[1,\infty)\) be given. Then there exists a countable \(\mathbb Q\)-linear subspace \(\mathcal C\)
of \(\bar \Gamma_p(\mathbf E)\) such that \(\mathbf E(x)={\rm cl}_\B\big\{v(x)\,:\,v\in\mathcal C\big\}\) for every \(x\in\X\).
\end{proposition}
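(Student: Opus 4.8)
\emph{Proof sketch.} The plan is to combine a measurable selection theorem with a truncation procedure. The crucial structural input is the weak measurability of \(\mathbf E\): since \(\B\) is separable (hence Polish) and each fiber \(\mathbf E(x)\) is a nonempty closed subset of \(\B\), the Castaing representation theorem (equivalently, a repeated application of the Kuratowski--Ryll-Nardzewski selection theorem) furnishes a countable family \((s_n)_{n\in\N}\) of measurable selectors \(s_n\colon\X\to\B\), with \(s_n(x)\in\mathbf E(x)\) for every \(x\in\X\), such that
\[
\mathbf E(x)={\rm cl}_\B\big\{s_n(x)\,:\,n\in\N\big\}\qquad\text{for every }x\in\X.
\]
Because \(\B\) is separable, each selector is essentially separably valued and weakly measurable, hence strongly measurable; thus \(s_n\in\bar\Gamma_0(\mathbf E)\) for every \(n\in\N\).

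The first issue is that the \(s_n\) need not lie in \(\bar\Gamma_p(\mathbf E)\), as \(\|s_n(\cdot)\|_\B\) may fail to be \(p\)-integrable. I would remedy this by truncating simultaneously in measure and in norm. Exploiting \(\sigma\)-finiteness, fix an increasing sequence \((X_k)_{k\in\N}\subseteq\Sigma\) with \(\mm(X_k)<+\infty\) and \(\bigcup_k X_k=\X\), and for all \(n,k,m\in\N\) set
\[
v_{n,k,m}\coloneqq\1_{A_{n,k,m}}\,s_n,\qquad\text{where }A_{n,k,m}\coloneqq\big\{x\in X_k\,:\,\|s_n(x)\|_\B\leq m\big\}\in\Sigma.
\]
Since \(0_\B\in\mathbf E(x)\) for every \(x\), each \(v_{n,k,m}\) is again a strongly measurable selector of \(\mathbf E\), and the pointwise bound \(\|v_{n,k,m}(\cdot)\|_\B\leq m\,\1_{X_k}\) together with \(\mm(X_k)<+\infty\) yields \(v_{n,k,m}\in\bar\Gamma_p(\mathbf E)\).

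Next I would verify that this countable family already spans the fibers densely. Fix \(x\in\X\), \(w\in\mathbf E(x)\), and \(\varepsilon>0\). By the Castaing property there is \(n\) with \(\|s_n(x)-w\|_\B<\varepsilon\); choosing \(k\) with \(x\in X_k\) and \(m\geq\|s_n(x)\|_\B\) gives \(x\in A_{n,k,m}\), so that \(v_{n,k,m}(x)=s_n(x)\) and hence \(\|v_{n,k,m}(x)-w\|_\B<\varepsilon\). Therefore \(\{v_{n,k,m}(x)\}_{n,k,m}\) is dense in \(\mathbf E(x)\). Finally, I would let \(\mathcal C\) be the \(\mathbb Q\)-linear span of \(\{v_{n,k,m}\,:\,n,k,m\in\N\}\). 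Since \(\bar\Gamma_p(\mathbf E)\) is a real vector space (the fibers being linear subspaces, and \(L^p(\mm)\) being closed under linear combinations), \(\mathcal C\) is a countable \(\mathbb Q\)-linear subspace of \(\bar\Gamma_p(\mathbf E)\); moreover every \(v(x)\) with \(v\in\mathcal C\) lies in the subspace \(\mathbf E(x)\), so \({\rm cl}_\B\{v(x):v\in\mathcal C\}\subseteq\mathbf E(x)\), while the reverse inclusion follows from the density established above. This yields the claimed equality for every \(x\in\X\).

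I expect the only genuinely delicate point to be the invocation of the measurable selection theorem, together with the verification that its hypotheses are met: namely, that weak measurability of \(\mathbf E\) in the sense defined above is exactly the condition under which the Castaing representation produces a countable, pointwise dense family of measurable selectors. The truncation step and the passage to the \(\mathbb Q\)-linear span are routine. \fr
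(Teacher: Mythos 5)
The paper does not prove this proposition: it is imported verbatim from \cite{DMLP21} (Proposition 4.4 there), whose proof runs along exactly the lines you propose --- the Castaing representation theorem applied to the weakly measurable, closed-nonempty-valued map \(\mathbf E\) into the Polish space \(\B\), followed by a truncation over a \(\sigma\)-finite exhaustion and passage to the \(\mathbb Q\)-linear span. Your argument is correct, including the two points where care is needed: the selectors are strongly (not just Borel) measurable because \(\B\) is separable, and the truncation via the sets \(A_{n,k,m}\) preserves the everywhere (not merely \(\mm\)-a.e.) density in the fibers, since every \(x\) lies in some \(X_k\) and \(0_\B\in\mathbf E(x)\).
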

In the sequel, it will be convenient to apply the Lebesgue Differentiation Theorem to the sections of a given Banach bundle.
In the particular case where the base space \(\X\) is a doubling metric measure space, this is classical result. However, we
are concerned with a much more general base space \(\X\), thus we need a generalised form of Lebesgue Differentiation Theorem,
obtained in \cite{GLDDE}. Before passing to its statement, we introduce some auxiliary terminology.
\medskip

Let \((\X,\Sigma,\mm)\) be a measure space. Then we say that a family \(\mathcal I\subseteq\Sigma\) is a \textbf{differentiation basis}
on \((\X,\Sigma,\mm)\) provided \(0<\mm(I)<+\infty\) for every \(I\in\mathcal I\), the set \(\mathcal I_x\coloneqq\{I\in\mathcal I\,:\,x\in I\}\)
is non-empty for \(\mm\)-a.e.\ \(x\in\X\), and each \(\mathcal I_x\) is \textbf{directed by downward inclusion}, meaning that for
any \(I,I'\in\mathcal I_x\) there exists \(J\in\mathcal I_x\) such that \(J\subseteq I\cap I'\). Every differentiation basis induces
a notion of limit, as follows. Fix a metric space \((\Y,\sfd)\) and a mapping \(\Phi\colon\X\to\Y\). Then we declare that the
\textbf{\(\mathcal I\)-limit} of \(\Phi\) at a point \(x\in\X\) exists and coincides with \(y\in\Y\) if for any \(\varepsilon>0\) there
exists \(I\in\mathcal I_x\) such that \(\sfd\big(\Phi(J),y\big)<\varepsilon\) holds for every \(J\in\mathcal I_x\) with \(J\subseteq I\).
In this case, we write \(\lim_{I\Rightarrow x}\Phi(I)=y\) for brevity. When \(\Y=\R\), we also define the \textbf{\(\mathcal I\)-limit superior}
\(\lims_{I\Rightarrow x}\Phi(I)\in\R\cup\{\pm\infty\}\) and the \textbf{\(\mathcal I\)-limit inferior} \(\limi_{I\Rightarrow x}\Phi(I)\in\R\cup\{\pm\infty\}\) as
\[\begin{split}
\lims_{I\Rightarrow x}\Phi(I)&\coloneqq\inf\Big\{\sigma\in\R\;\Big|\;\exists\,I\in\mathcal I_x:\;\Phi(J)\leq\sigma,
\text{ for every }J\in\mathcal I_x\text{ with }J\subseteq I\Big\},\\
\limi_{I\Rightarrow x}\Phi(I)&\coloneqq\sup\Big\{\sigma\in\R\;\Big|\;\exists\,I\in\mathcal I_x:\;\Phi(J)\geq\sigma,
\text{ for every }J\in\mathcal I_x\text{ with }J\subseteq I\Big\},
\end{split}\]
respectively. Observe that \(\lim_{I\Rightarrow x}\Phi(I)\) exists if and only if \(\limi_{I\Rightarrow x}\Phi(I)=\lims_{I\Rightarrow x}\Phi(I)\in\R\).
In this case, it also holds that \(\lim_{I\Rightarrow x}\Phi(I)=\limi_{I\Rightarrow x}\Phi(I)=\lims_{I\Rightarrow x}\Phi(I)\). Finally, we recall that a
strongly measurable map \(v\colon\X\to\B\) is said to be \textbf{locally Bochner integrable with respect to \(\mathcal I\)} if for \(\mm\)-a.e.\ \(x\in\X\)
there exists \(I\in\mathcal I_x\) such that \(v\) is Bochner integrable on \(I\).
\medskip

We are in a position to state the following theorem, which was proved in \cite[Theorem 3.5]{GLDDE}.
\begin{theorem}[Lebesgue Differentiation Theorem for sections]\label{thm:Leb_diff}
Let \((\X,\Sigma,\mm)\) be a complete, \(\sigma\)-finite measure space, \(\B\) a Banach space, \(\mathbf E\) a Banach \(\B\)-bundle over \(\X\).
Then there exists a differentiation basis \(\mathcal I\) on \((\X,\Sigma,\mm)\) such that the following property holds. Given \(p\in[1,\infty]\)
and \(v\in\Gamma_p(\mathbf E)\), it holds that the map \(v\) is locally Bochner integrable with respect to \(\mathcal I\) and that
\begin{equation}\label{eq:Lebesgue_pt}
\exists\,\hat v(x)\coloneqq\lim_{I\Rightarrow x}\fint_I v\,\d\mm\in\mathbf E(x)\;\;\text{and}\;\;
\lim_{I\Rightarrow x}\fint_I\big\|v(\cdot)-\hat v(x)\big\|_\B\,\d\mm=0,\quad\text{ for }\mm\text{-a.e.\ }x\in\X.
\end{equation}
Setting \(\hat v(x)\coloneqq 0_{\mathbf E(x)}\) elsewhere, we have that \(\hat v\in\bar\Gamma_p(\mathbf E)\)
and \(\hat v(x)=v(x)\) for \(\mm\)-a.e.\ \(x\in\X\).
\end{theorem}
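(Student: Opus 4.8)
The plan is to deduce the bundle-valued statement from its scalar counterpart. The crucial ingredient, where essentially all the work lies, is the construction of a differentiation basis \(\mathcal I\) on the abstract space \((\X,\Sigma,\mm)\) for which the \emph{scalar} Lebesgue Differentiation Theorem holds: namely, such that for every locally \(\mathcal I\)-integrable function \(f\) one has \(\lim_{I\Rightarrow x}\fint_I f\,\d\mm=f(x)\) for \(\mm\)-a.e.\ \(x\in\X\). Granting such a basis, the bundle-valued statement follows by a standard Lebesgue-point argument, exploiting the strong measurability (hence essential separability) of the sections. I would fix this basis once and for all; observe that it depends neither on \(v\) nor on \(p\), and that every \(I\in\mathcal I\) has finite measure, so that each \(v\in\Gamma_p(\mathbf E)\) is automatically Bochner integrable on every \(I\in\mathcal I\) by H\"older's inequality (since \(\int_I\|v(\cdot)\|_\B\,\d\mm\leq\|v\|_{\Gamma_p(\mathbf E)}\,\mm(I)^{1/q}<+\infty\)), in particular locally Bochner integrable with respect to \(\mathcal I\).

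For the reduction, fix \(v\in\Gamma_p(\mathbf E)\) together with a representative in \(\bar\Gamma_p(\mathbf E)\). Since \(v\) is strongly measurable it is essentially separably valued, so there is an \(\mm\)-null set outside of which \(v\) takes values in a fixed separable closed subspace \(S\subseteq\B\); pick a countable dense set \(\{b_n\}_n\subseteq S\). For each \(n\) the scalar function \(x\mapsto\|v(x)-b_n\|_\B\) is locally integrable, so applying the scalar theorem and intersecting the resulting countably many full-measure sets I obtain a set \(G\) of full measure on which \(\lim_{I\Rightarrow x}\fint_I\|v(\cdot)-b_n\|_\B\,\d\mm=\|v(x)-b_n\|_\B\) holds simultaneously for all \(n\). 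I would then run the classical Lebesgue-point estimate: given \(x\in G\) and \(\varepsilon>0\), choose \(b_n\) with \(\|v(x)-b_n\|_\B<\varepsilon\) and insert the triangle inequality \(\|v(y)-v(x)\|_\B\leq\|v(y)-b_n\|_\B+\varepsilon\) under the average, obtaining \(\lims_{I\Rightarrow x}\fint_I\|v(\cdot)-v(x)\|_\B\,\d\mm\leq\|v(x)-b_n\|_\B+\varepsilon\leq 2\varepsilon\). Letting \(\varepsilon\downarrow 0\) yields the second assertion in \eqref{eq:Lebesgue_pt}, i.e.\ every \(x\in G\) is a Lebesgue point of \(v\).

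From the Lebesgue-point property the first assertion is immediate: since the norm of a Bochner integral is controlled by the integral of the norm, \(\big\|\fint_I v\,\d\mm-v(x)\big\|_\B=\big\|\fint_I\big(v(\cdot)-v(x)\big)\,\d\mm\big\|_\B\leq\fint_I\|v(\cdot)-v(x)\|_\B\,\d\mm\to 0\), so the \(\mathcal I\)-limit \(\hat v(x)=\lim_{I\Rightarrow x}\fint_I v\,\d\mm\) exists and equals \(v(x)\) for every \(x\in G\); in particular \(\hat v(x)=v(x)\in\mathbf E(x)\), because \(v\) is a section. Setting \(\hat v(x)\coloneqq 0_{\mathbf E(x)}\) off \(G\), the map \(\hat v\) coincides \(\mm\)-a.e.\ with \(v\), whence \(\|\hat v(\cdot)\|_\B=\|v(\cdot)\|_\B\) \(\mm\)-a.e.\ gives \(\hat v\in\bar\Gamma_p(\mathbf E)\); the strong measurability of \(\hat v\) follows from that of \(v\) together with the completeness of the measure space, which guarantees that a map agreeing \(\mm\)-a.e.\ with a strongly measurable one is itself strongly measurable.

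The genuine difficulty is concentrated entirely in the first step, the production of a differentiation basis supporting the scalar Lebesgue Differentiation Theorem on an arbitrary complete \(\sigma\)-finite measure space, where no metric or covering structure is available a priori. Here I would not rely on Vitali- or Besicovitch-type covering arguments but on a measure-theoretic substitute: either a lifting of \(L^\infty(\mm)\) (in the spirit of the von Neumann--Maharam lifting theorem) or a suitable increasing net of countably generated sub-\(\sigma\)-algebras, along which scalar differentiation reduces to a martingale-type convergence statement. Checking that the induced family \(\mathcal I\) is a genuine differentiation basis in the sense defined above -- in particular that each \(\mathcal I_x\) is directed by downward inclusion and that the associated \(\mathcal I\)-limits compute \(\mm\)-a.e.\ pointwise values -- is the technical heart of the argument, and the step I expect to require the most care.
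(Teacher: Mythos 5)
You have located the difficulty correctly, but you have not resolved it, and it is essentially the whole theorem. The reduction you perform -- essential separability of the range, a countable dense set \(\{b_n\}_{n\in\N}\), scalar Lebesgue points of \(x\mapsto\|v(x)-b_n\|_\B\) for all \(n\) simultaneously, the triangle-inequality estimate, and \(\big\|\fint_I\big(v(\cdot)-v(x)\big)\,\d\mm\big\|_\B\leq\fint_I\|v(\cdot)-v(x)\|_\B\,\d\mm\) -- is correct, as are your remarks on local Bochner integrability (H\"{o}lder against \(\mm(I)<+\infty\)) and on the measurability of \(\hat v\) via completeness of \((\X,\Sigma,\mm)\). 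But this is the routine part. The substance of Theorem \ref{thm:Leb_diff} is the existence of a differentiation basis \(\mathcal I\) on an \emph{arbitrary} complete \(\sigma\)-finite measure space -- with no metric, topology, or covering structure -- that differentiates every locally integrable scalar function, and on this point your proposal contains no proof: \(\mathcal I\) is never constructed, the directedness of each \(\mathcal I_x\) is never verified, and the a.e.\ identity \(\lim_{I\Rightarrow x}\fint_I f\,\d\mm=f(x)\) is never established. Invoking ``a lifting of \(L^\infty(\mm)\) or a martingale argument along countably generated sub-\(\sigma\)-algebras'' names two candidate tools but uses neither. Note moreover that the martingale route, as described, only makes sense when the relevant \(\sigma\)-algebra is essentially countably generated, i.e.\ when the measure space is separable, which is not assumed here; and in the lifting route, the passage from a lifting to a basis that differentiates all of \(L^1\) (rather than merely bounded functions) is itself a substantive theorem, not a verification. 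So, in substance, your argument is conditional on a statement at least as strong as the one to be proved.

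For comparison: the paper does not prove Theorem \ref{thm:Leb_diff} at all; it imports it from \cite{GLDDE} (Theorem 3.5 there), the same source as the approximate-continuity result of Theorem \ref{thm:approx_cont}. In that reference the differentiation basis is indeed produced from a lifting of \(L^\infty(\mm)\), and the Lebesgue-point property of \(L^p\)-sections is deduced from the approximate-continuity statement (Lemma 3.2 there), which plays the role of your scalar step. So your overall architecture -- abstract basis first, then a pointwise reduction for vector-valued sections -- is aligned with the cited source, but as written your text is a reduction of the theorem to an unproved core, not a proof.
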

We say that \(\hat v\) is the \textbf{precise representative} of \(v\). Every \(x\in\X\) where \eqref{eq:Lebesgue_pt} holds
is said to be a \textbf{Lebesgue point} of \(v\). We denote by \({\rm Leb}(v)\in\Sigma\) the set of all Lebesgue points of \(v\).
\medskip

In fact, a stronger property holds: as proven in \cite[Lemma 3.2]{GLDDE}, on any complete, \(\sigma\)-finite measure space it is possible to
find a differentiation basis \(\mathcal I\) such that every \(L^p\)-section is `approximately continuous' with respect to \(\mathcal I\).
The precise statement reads as follows.
\begin{theorem}\label{thm:approx_cont}
Let \((\X,\Sigma,\mm)\) be a complete, \(\sigma\)-finite measure space, \(\B\) a Banach space, \(\mathbf E\) a Banach \(\B\)-bundle over \(\X\).
Then there exists a differentiation basis \(\mathcal I\) on \((\X,\Sigma,\mm)\) such that the following property holds. Given any \(p\in[1,\infty]\) and
\(v\in\Gamma_p(\mathbf E)\), it is possible to find a representative \(\bar v\in\bar\Gamma_p(\mathbf E)\) of \(v\) such that for \(\mm\)-a.e.\ point
\(x\in\X\) it holds that
\begin{equation}\label{eq:approx_cont}
\forall\varepsilon>0,\quad\exists\,I\in\mathcal I_x:\quad\big\|\bar v(y)-\bar v(x)\big\|_\B<\varepsilon,\quad\text{for every }y\in I.
\end{equation}
By a point of \textbf{approximate continuity} of \(\bar v\) we mean a point \(x\in\X\) for which \eqref{eq:approx_cont} holds.
\end{theorem}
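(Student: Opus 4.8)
The plan is to construct a single differentiation basis \(\mathcal I\) once and for all from a \textbf{lifting} of the measure space, and then, for each section, to take \(\bar v\) to be the corresponding lifting representative. The reason a lifting seems unavoidable is that no \(v\)-independent basis built only from the spanning sections can work: e.g.\ on \([0,1]\) with \(\mathbf E\equiv\R\) the spanning family reduces to constants, so adapting the cells to it imposes nothing, yet a nested basis of intervals fails to make the indicator of a nowhere dense set of positive measure approximately continuous at its density points. Thus the basis must ``see'' \emph{all} measurable sets at once, which is exactly what a lifting provides.

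First I would reduce to a finite measure. By \(\sigma\)-finiteness write \(\X=\bigsqcup_{n}X_n\) with \(0<\mm(X_n)<\infty\); it suffices to build a basis on each \(X_n\) and take the union, since \eqref{eq:approx_cont} is a local condition. On a complete finite measure space the von Neumann--Maharam theorem yields a lifting \(\ell\), i.e.\ a map \(A\mapsto\ell(A)\) on \(\Sigma\) with \(\ell(A)=A\) up to \(\mm\)-null sets, \(\ell(A\cap B)=\ell(A)\cap\ell(B)\), and \(\ell(A)=\varnothing\) whenever \(\mm(A)=0\). I would then call a set \textbf{density-open} if \(A\subseteq\ell(A)\), and set
\[
\mathcal I:=\big\{A\in\Sigma\;:\;A\subseteq\ell(A),\ 0<\mm(A)<\infty\big\},\qquad\mathcal I_x:=\{A\in\mathcal I:x\in A\}.
\]
Directedness by downward inclusion is immediate and is precisely where multiplicativity of \(\ell\) enters: if \(x\in A,B\) with \(A\subseteq\ell(A)\) and \(B\subseteq\ell(B)\), then \(A\cap B\subseteq\ell(A)\cap\ell(B)=\ell(A\cap B)\), so \(A\cap B\in\mathcal I_x\) lies below both, while \(x\in A\cap B\subseteq\ell(A\cap B)\) forces \(\mm(A\cap B)>0\). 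Replacing each \(X_n\) by \(X_n\cap\ell(X_n)\) shows \(\mathcal I_x\neq\varnothing\) for \(\mm\)-a.e.\ \(x\).

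For the representative I would use Proposition \ref{prop:spanning_sects} to fix a countable family \(\{w_n\}\) spanning the fibers, and recall that every section is essentially separably valued. Since \(\B\) is separable, the scalar lifting \(\ell\) can be promoted to strongly measurable \(\B\)-valued maps — applying it to the countably many scalar functions \(\langle\omega_j,v(\cdot)\rangle\) for a sequence \((\omega_j)\subseteq\B'\) norming on a dense set, and then passing to the limit — producing a representative \(\bar v\) (put \(\bar v(x):=0_{\mathbf E(x)}\) on the residual null set, so that \(\bar v\in\bar\Gamma_p(\mathbf E)\)). The decisive property, which is the classical content of lifting theory, is that \(\bar v\) is \emph{continuous for the density topology}: for \(\mm\)-a.e.\ \(x\) and every \(\varepsilon>0\) the set \(A_{x,\varepsilon}:=\{y:\|\bar v(y)-\bar v(x)\|_\B<\varepsilon\}\) contains a density-open set \(A\ni x\). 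Taking \(I:=A\cap\big(X_n\cap\ell(X_n)\big)\in\mathcal I_x\) then gives \(\|\bar v(y)-\bar v(x)\|_\B<\varepsilon\) for \emph{every} \(y\in I\), which is exactly \eqref{eq:approx_cont}; note this works uniformly in \(p\in[1,\infty]\), since only measurability of \(v\) is used.

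The hard part will be the third step: transporting the scalar lifting theory to \(\B\)-valued, fiber-constrained sections. Two points require care — first, constructing the \(\B\)-valued lifting and verifying its density-continuity, so that the strengthening from ``a.e.\ \(y\)'' to ``every \(y\in I\)'' genuinely holds (this is the whole content of the statement beyond Theorem \ref{thm:Leb_diff}); and second, guaranteeing \(\bar v(x)\in\mathbf E(x)\), for which I would exploit \(\mathbf E(x)=\mathrm{cl}_\B\{w_n(x)\}\) together with the density-continuity of the countably many \(\bar w_n\) to confine \(\bar v(x)\) to the fiber off a null set.
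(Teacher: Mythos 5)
First, a point of order: the paper does not prove Theorem \ref{thm:approx_cont} at all -- it imports it from \cite[Lemma 3.2]{GLDDE} -- and the proof given there is indeed of the kind you outline: a von Neumann--Maharam lifting \(\ell\) on each finite-measure piece, the induced density topology \(\{A\in\Sigma\,:\,A\subseteq\ell(A)\}\), and a differentiation basis made of density-open sets of positive finite measure. Your reduction to finite measure and your verification that \(\mathcal I\) is a differentiation basis (directedness via multiplicativity of \(\ell\), positivity of measure via \(\ell(A)=\varnothing\) when \(\mm(A)=0\), non-emptiness of \(\mathcal I_x\) via \(X_n\cap\ell(X_n)\)) are correct and match the source.

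The gap lies in the step you yourself flag as ``the hard part'', and the route you sketch for it would not work as stated. The lifting acts on \(L^\infty(\mm)\), so it cannot be applied to the pairings \(\langle\omega_j,v(\cdot)\rangle\) of an unbounded \(L^p\)-section without truncation; more seriously, reconstructing \(\bar v(x)\) ``by passing to the limit'' from countably many lifted scalars produces only a bounded \(\mathbb Q\)-linear functional on the closed span of \((\omega_j)_{j\in\N}\), hence (after Hahn--Banach) an element of \(\B''\): nothing forces this element to lie in \(\B\), let alone in the fiber \(\mathbf E(x)\), and your proposed repair of the fiber constraint via the \(\bar w_n\) presupposes exactly the density-continuity you are trying to establish. (Note also that \(\B\) is \emph{not} assumed separable in the statement; what is available is essential separability of the range of \(v\).) The correct completion is simpler and needs no vector-valued lifting at all: pick a countable set \((b_k)_{k\in\N}\) dense in the essential image of \(v\), lift the \emph{bounded scalar} functions \(f_k\coloneqq\min\big\{\|v(\cdot)-b_k\|_\B,1\big\}\), let \(N\) be the \(\mm\)-null set off which \(f_k=\ell(f_k)\) for every \(k\), and take \(\bar v\coloneqq v\) on \(\X\setminus N\) and \(\bar v(x)\coloneqq 0_{\mathbf E(x)}\) on \(N\), so that the fiber constraint is automatic. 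The mechanism that upgrades ``for \(\mm\)-a.e.\ \(y\)'' to ``for \emph{every} \(y\in I\)'' -- which is the entire content of the theorem beyond Theorem \ref{thm:Leb_diff}, and which is missing from your proposal -- is the observation that the complement of a null set is density-open, since \(\ell(\X\setminus N)=\ell(\X)\supseteq\X\setminus N\); hence every basis element may be intersected with \(\X\setminus N\). Concretely, given \(x\notin N\) (in the piece \(X_n\)) and \(\varepsilon\in(0,1)\), choose \(k\) with \(\|v(x)-b_k\|_\B<\varepsilon/8\); positivity and multiplicativity of the lifting yield \(x\in\ell\big(\{f_k\leq\varepsilon/4\}\big)\) and \(\ell(f_k)\leq\varepsilon/4\) everywhere on \(\ell\big(\{f_k\leq\varepsilon/4\}\big)\), so the set \(I\coloneqq\ell\big(\{f_k\leq\varepsilon/4\}\big)\cap(\X\setminus N)\cap\big(X_n\cap\ell(X_n)\big)\) belongs to \(\mathcal I_x\) and every \(y\in I\) satisfies \(\|\bar v(y)-\bar v(x)\|_\B\leq\varepsilon/4+\varepsilon/8<\varepsilon\), which is \eqref{eq:approx_cont}.
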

Note that any \(x\in\X\) of approximate continuity of \(\bar v\) is a Lebesgue point of \(v\) and \(\hat v(x)=\bar v(x)\).
\subsection{Banach modules}
In this section we recall the basics of the theory of Banach modules. We begin by introducing
the notion of \(L^p\)-normed \(L^\infty\)-module proposed by N.\ Gigli in \cite{Gigli14}.
\begin{definition}[\(L^p\)-normed \(L^\infty\)-module]
Let \((\X,\Sigma,\mm)\) be a \(\sigma\)-finite measure space and let \(p\in(1,\infty)\) be a given exponent. Let \(\mathscr M\) be a module over
\(L^\infty(\mm)\). Then a map \(|\cdot|\colon\mathscr M\to L^p(\mm)\) is said to be an \textbf{\(L^p(\mm)\)-pointwise norm} on \(\mathscr M\)
provided it verifies the following conditions:
\begin{subequations}\begin{align}
\label{eq:ptwse_norm1}
|v|\geq 0&,\quad\text{ for every }v\in\mathscr M,\text{ with equality if and only if }v=0,\\
\label{eq:ptwse_norm2}
|v+w|\leq|v|+|w|&,\quad\text{ for every }v,w\in\mathscr M,\\
\label{eq:ptwse_norm3}
|f\cdot v|=|f||v|&,\quad\text{ for every }f\in L^\infty(\mm)\text{ and }v\in\mathscr M,
\end{align}\end{subequations}
where equalities and inequalities are intended in the \(\mm\)-a.e.\ sense. The couple \((\mathscr M,|\cdot|)\) is called an
\textbf{\(L^p(\mm)\)-normed \(L^\infty(\mm)\)-module}. We endow \(\mathscr M\) with the norm \(\|\cdot\|_{\mathscr M}\), given by
\[
\|v\|_{\mathscr M}\coloneqq\big\||v|\big\|_{L^p(\mm)},\quad\text{ for every }v\in\mathscr M.
\]
When \(\|\cdot\|_{\mathscr M}\) is complete, we say that \((\mathscr M,|\cdot|)\) is an \textbf{\(L^p(\mm)\)-Banach \(L^\infty(\mm)\)-module}.
\end{definition}
A prototypical example of \(L^p(\mm)\)-Banach \(L^\infty(\mm)\)-module is the space of \(L^p(\mm)\)-sections of a Banach \(\B\)-bundle
\(\mathbf E\) over \(\X\), the \(L^p(\mm)\)-pointwise norm on \(\Gamma_p(\mathbf E)\) being \(|v|\coloneqq\|v(\cdot)\|_{\mathbf E(\cdot)}\).
\medskip

An important class of \(L^2(\mm)\)-Banach \(L^\infty(\mm)\)-modules is the one of Hilbert modules. Following \cite[Definition 1.2.20]{Gigli14},
we say that an \(L^2(\mm)\)-Banach \(L^\infty(\mm)\)-module \(\mathscr H\) is \textbf{Hilbert} provided it is Hilbert when viewed as a Banach space.
It is shown in \cite[Proposition 1.2.21]{Gigli14} that \(\mathscr H\) is a Hilbert module if and only if the pointwise parallelogram identity holds:
\[
|v+w|^2+|v-w|^2=2|v|^2+2|w|^2\,\;\;\mm\text{-a.e.,}\quad\text{ for every }v,w\in\mathscr H.
\]

An operator between two \(L^p(\mm)\)-normed \(L^\infty(\mm)\)-modules is called a \textbf{homomorphism of \(L^p(\mm)\)-normed \(L^\infty(\mm)\)-modules}
provided it is \(L^\infty(\mm)\)-linear and continuous. The \textbf{dual} of an \(L^p(\mm)\)-normed \(L^\infty(\mm)\)-module \(\mathscr M\) is
given by the space \(\mathscr M^*\) of all \(L^\infty(\mm)\)-linear and continuous maps from \(\mathscr M\) to \(L^1(\mm)\).
It holds that \(\mathscr M^*\) is an \(L^q(\mm)\)-Banach \(L^\infty(\mm)\)-module if endowed with the \(L^q(\mm)\)-pointwise
norm operator \(|\cdot|\colon\mathscr M^*\to L^q(\mm)\), which is defined as
\[
|\omega|\coloneqq\bigvee\big\{\omega(v)\;\big|\;v\in\mathscr M,\,|v|\leq 1\text{ holds }\mm\text{-a.e.}\big\},\quad\text{ for every }\omega\in\mathscr M^*.
\]
Furthermore, we denote by \(J_{\mathscr M}\colon\mathscr M\to\mathscr M^{**}\) the \textbf{James' embedding} of \(\mathscr M\) into its bidual, \emph{i.e.},
the unique homomorphism of \(L^p(\mm)\)-Banach \(L^\infty(\mm)\)-modules satisfying
\begin{equation}\label{eq:def_James}
\langle J_{\mathscr M}(v),\omega\rangle=\langle\omega,v\rangle,\quad\text{ for every }v\in\mathscr M\text{ and }\omega\in\mathscr M^*.
\end{equation}
We have that \(|J_{\mathscr M}(v)|=|v|\) holds \(\mm\)-a.e.\ for every \(v\in\mathscr M\). Then the space \(\mathscr M\) is said to be
\textbf{reflexive (as a module)} provided \(J_{\mathscr M}\) is surjective (and thus an isomorphism). According to \cite[Corollary 1.2.18]{Gigli14},
\(\mathscr M\) is reflexive if and only if it is reflexive as a Banach space.
\medskip

Let us also recall the notion of \textbf{adjoint operator}: given a homomorphism \(\varphi\colon\mathscr M\to\mathscr N\) between two
\(L^p(\mm)\)-Banach \(L^\infty(\mm)\)-modules \(\mathscr M\) and \(\mathscr N\), we denote by \(\varphi^{\rm ad}\colon\mathscr N^*\to\mathscr M^*\)
the unique homomorphism of \(L^q(\mm)\)-Banach \(L^\infty(\mm)\)-modules such that
\begin{equation}\label{eq:def_ad}
\langle\varphi^{\rm ad}(\omega),v\rangle=\langle\omega,\varphi(v)\rangle,\quad\text{ for every }\omega\in\mathscr N^*\text{ and }v\in\mathscr M.
\end{equation}
It holds that \(\varphi^{\rm ad}\) is an isomorphism if and only if \(\varphi\) is an isomorphism.
\medskip

In the theory of Banach modules, it is often convenient to drop the integrability assumption:
\begin{definition}[\(L^0\)-normed \(L^0\)-module]
Let \((\X,\Sigma,\mm)\) be a \(\sigma\)-finite measure space. Let \(\mathscr M\) be a module over \(L^0(\mm)\). Then a map \(|\cdot|\colon\mathscr M\to L^0(\mm)\)
is said to be an \textbf{\(L^0(\mm)\)-pointwise norm} on \(\mathscr M\) provided it verifies \eqref{eq:ptwse_norm1}, \eqref{eq:ptwse_norm2}, \eqref{eq:ptwse_norm3},
but replacing \(L^\infty(\mm)\) with \(L^0(\mm)\) in \eqref{eq:ptwse_norm3}. The couple \((\mathscr M,|\cdot|)\) is called an
\textbf{\(L^0(\mm)\)-normed \(L^0(\mm)\)-module}, or a \textbf{random normed module over \(\R\) with base \((\X,\Sigma,\mm)\)}
in the case where \(\mm\) is a probability measure. We also endow \(\mathscr M\) with the distance \(\sfd_{\mathscr M}\), given by
\begin{equation}\label{eq:def_L0_dist}
\sfd_{\mathscr M}(v,w)\coloneqq\int\min\{|v-w|,1\}\,\d\mm',\quad\text{ for every }v\in\mathscr M,
\end{equation}
where \(\mm'\) is any given probability measure on \(\Sigma\) satisfying \(\mm\ll\mm'\ll\mm\). When \(\sfd_{\mathscr M}\) is complete,
we say that \((\mathscr M,|\cdot|)\) is an \textbf{\(L^0(\mm)\)-Banach \(L^0(\mm)\)-module}.
\end{definition}
A key example of \(L^0(\mm)\)-Banach \(L^0(\mm)\)-module is the space \(\Gamma_0(\mathbf E)\), where \(\mathbf E\) is a Banach \(\B\)-bundle
over \(\X\), and as an \(L^0(\mm)\)-pointwise norm on \(\Gamma_0(\mathbf E)\) we consider \(|v|\coloneqq\|v(\cdot)\|_{\mathbf E(\cdot)}\).
\medskip

It is worth pointing out that a random normed module is complete with respect to the distance introduced in \eqref{eq:def_L0_dist}
if and only if it is complete in the sense of \cite{GZRNM,GZRNM2}, \emph{i.e.}, with respect to the so-called \((\epsilon,\lambda)\)-topology.
Indeed, both the topology induced by the \(L^0\)-distance and the \((\epsilon,\lambda)\)-topology coincide with the one of
`convergence in measure', cf.\ \cite{GP20} and \cite{Guo99}.
\medskip

An operator between two \(L^0(\mm)\)-normed \(L^0(\mm)\)-modules is called a \textbf{homomorphism of \(L^0(\mm)\)-normed \(L^0(\mm)\)-modules}
provided it is \(L^0(\mm)\)-linear and continuous.
\medskip

The relation between \(L^p(\mm)\)-Banach \(L^\infty(\mm)\)-modules and \(L^0(\mm)\)-Banach \(L^0(\mm)\)-modules
can be expressed by the following result, which is taken from \cite[Theorem/Definition 2.7]{Gigli17}.
\begin{proposition}[\(L^0\)-completion]
Let \((\X,\Sigma,\mm)\) be a \(\sigma\)-finite measure space. Let \(\mathscr M\) be an \(L^p(\mm)\)-Banach \(L^\infty(\mm)\)-module,
for some exponent \(p\in(1,\infty)\). Then there exists a unique couple \((\mathscr M^0,\iota)\), where \(\mathscr M^0\) is an
\(L^0(\mm)\)-Banach \(L^0(\mm)\)-module, while \(\iota\colon\mathscr M\to\mathscr M^0\) is a linear operator which preserves the
pointwise norm and has dense image. Uniqueness is intended up to unique isomorphism: given another couple \((\mathscr N^0,\iota')\) having
the same properties, there exists a unique isomorphism of \(L^0(\mm)\)-Banach \(L^0(\mm)\)-modules \(\Phi\colon\mathscr M^0\to\mathscr N^0\)
such that \(\iota'=\Phi\circ\iota\). The space \(\mathscr M^0\) is called the \textbf{\(L^0(\mm)\)-completion} of \(\mathscr M\).
\end{proposition}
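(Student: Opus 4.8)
The plan is to realise $\mathscr M^0$ as the abstract metric completion of $\mathscr M$ with respect to the $L^0$-distance $\sfd_{\mathscr M}$ of \eqref{eq:def_L0_dist}, and then to transport the entire algebraic and metric structure across the completion by continuity. First I would check that $\sfd_{\mathscr M}$ is a genuine translation-invariant metric on $\mathscr M$: positivity and symmetry are immediate from \eqref{eq:ptwse_norm1}, while the triangle inequality follows from \eqref{eq:ptwse_norm2} together with the subadditivity and monotonicity of $t\mapsto\min\{t,1\}$. Letting $(\mathscr M^0,\sfd_{\mathscr M^0})$ denote the completion and $\iota\colon\mathscr M\to\mathscr M^0$ the canonical isometric embedding, the image $\iota(\mathscr M)$ is dense and $\mathscr M^0$ is complete by construction; thus the only remaining task is to equip $\mathscr M^0$ with the correct structure and verify the stated axioms.

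Next I would extend the additive structure and the pointwise norm. Addition on $\mathscr M$ satisfies $\sfd_{\mathscr M}(v_1+w_1,v_2+w_2)\le\sfd_{\mathscr M}(v_1,v_2)+\sfd_{\mathscr M}(w_1,w_2)$ by translation invariance, so it is uniformly continuous and extends uniquely and continuously to $\mathscr M^0$, making it an abelian topological group. Similarly, the pointwise norm $|\cdot|\colon(\mathscr M,\sfd_{\mathscr M})\to L^0(\mm)$ — where $L^0(\mm)$ carries the topology of convergence in measure — is $1$-Lipschitz, since $\big||v|-|w|\big|\le|v-w|$ $\mm$-a.e.\ by \eqref{eq:ptwse_norm2}, and hence extends to a continuous map $|\cdot|\colon\mathscr M^0\to L^0(\mm)$. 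The remaining and genuinely delicate point is the construction of a scalar multiplication by $L^0(\mm)$, which must simultaneously account for the real vector-space structure (via constant functions) and promote the original $L^\infty(\mm)$-action to a full $L^0(\mm)$-action.

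The hard part will be precisely this extension of the module action. Here I would use that $L^\infty(\mm)$ is dense in $L^0(\mm)$ for convergence in measure, via the truncations $f_n\coloneqq f\,\1_{\{|f|\le n\}}$, and establish the pointwise estimate
\[
|f\cdot v-g\cdot w|\le|f|\,|v-w|+|f-g|\,|w|\quad\mm\text{-a.e.},
\]
which follows from \eqref{eq:ptwse_norm2} and \eqref{eq:ptwse_norm3}. Since multiplication $L^0(\mm)\times L^0(\mm)\to L^0(\mm)$ is jointly continuous for convergence in measure (a fixed factor times something tending to $0$ in measure tends to $0$ in measure, and a sequence convergent in measure is bounded in probability), this estimate shows that smallness in measure of $|v-w|$ and $|f-g|$ forces smallness in measure of $|f\cdot v-g\cdot w|$. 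Consequently, for $f\in L^0(\mm)$ and $\xi\in\mathscr M^0$, picking $f_n\in L^\infty(\mm)$ with $f_n\to f$ in measure and $v_n\in\mathscr M$ with $\iota(v_n)\to\xi$, the sequence $\iota(f_n\cdot v_n)$ is Cauchy in $\mathscr M^0$, and its limit is independent of the approximating sequences; this defines $f\cdot\xi$. I would then verify the $L^0(\mm)$-module axioms and that $|\cdot|$ obeys \eqref{eq:ptwse_norm3} with $L^0(\mm)$ in place of $L^\infty(\mm)$, each identity passing to the limit from its counterpart on $\iota(\mathscr M)$. This establishes that $(\mathscr M^0,|\cdot|)$ is an $L^0(\mm)$-Banach $L^0(\mm)$-module and that $\iota$ is a pointwise-norm–preserving linear map with dense image.

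Finally, for uniqueness I would argue by density. Given another couple $(\mathscr N^0,\iota')$ with the stated properties, both $\iota$ and $\iota'$ preserve the pointwise norm and therefore preserve $\sfd_{\mathscr M}$, so the assignment $\iota(v)\mapsto\iota'(v)$ is a well-defined surjective isometry between the dense subsets $\iota(\mathscr M)$ and $\iota'(\mathscr M)$. As $\mathscr N^0$ is complete, it extends uniquely to an isometry $\Phi\colon\mathscr M^0\to\mathscr N^0$ whose image is closed and dense, hence all of $\mathscr N^0$, so $\Phi$ is an isometric bijection. That $\Phi$ is $L^0(\mm)$-linear and pointwise-norm preserving follows by passing to the limit the corresponding identities valid on $\iota(\mathscr M)$, using the continuity of all operations established above; and the required relation $\iota'=\Phi\circ\iota$ holds by construction. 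Uniqueness of $\Phi$ is then forced by the density of $\iota(\mathscr M)$ together with $\iota'=\Phi\circ\iota$.
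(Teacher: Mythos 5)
Two preliminary remarks. First, the paper does not actually prove this proposition: it is quoted from \cite[Theorem/Definition 2.7]{Gigli17}, so there is no internal proof to compare against; your metric-completion construction is precisely the standard route taken in that reference. Second, the existence half of your argument is essentially sound: \(\sfd_{\mathscr M}\) is a translation-invariant metric, addition and the pointwise norm extend by uniform continuity and \(1\)-Lipschitzness, and your construction of the \(L^0(\mm)\)-action via truncations \(f_n=f\,\1_{\{|f|\leq n\}}\), the estimate \(|f\cdot v-g\cdot w|\leq|f||v-w|+|f-g||w|\), and tightness of Cauchy-in-measure sequences is correct. One routine point you should state explicitly rather than subsume under ``each identity passing to the limit'': the completion metric \(\sfd_{\mathscr M^0}\) coincides with \(\int\min\{|\cdot|,1\}\,\d\mm'\) computed from the \emph{extended} pointwise norm (both sides are continuous and agree on the dense set \(\iota(\mathscr M)\)); this is what makes \(\mathscr M^0\) an \(L^0(\mm)\)-Banach \(L^0(\mm)\)-module in the paper's sense, and it is also what gives the nondegeneracy axiom \(|\xi|=0\Rightarrow\xi=0\).

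There is, however, a genuine gap in the uniqueness part. You assert that \(\Phi\) is \(L^0(\mm)\)-linear ``by passing to the limit the corresponding identities valid on \(\iota(\mathscr M)\)''. The identity you need on \(\iota(\mathscr M)\) is \(\Phi\big(f\cdot\iota(v)\big)=f\cdot\Phi\big(\iota(v)\big)\) for \(f\in L^\infty(\mm)\), which, using \(f\cdot\iota(v)=\iota(f\cdot v)\) (valid by your construction of the action on \(\mathscr M^0\)), unwinds to \(\iota'(f\cdot v)=f\cdot\iota'(v)\). But this is \emph{not} among the hypotheses on \(\iota'\): the statement only requires \(\iota'\) to be linear (\emph{i.e.}, \(\R\)-linear) and pointwise-norm preserving, so the ``corresponding identities'' are simply not available and must be derived. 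Fortunately, they are automatic, by the following argument, which is the missing ingredient. Given \(E\in\Sigma\) and \(v\in\mathscr M\), set \(a\coloneqq\iota'(\1_E\cdot v)\) and \(b\coloneqq\iota'(\1_{\X\setminus E}\cdot v)\), so that \(a+b=\iota'(v)\), \(|a|=\1_E|v|\), and \(|b|=\1_{\X\setminus E}|v|\) hold \(\mm\)-a.e.\ by linearity and norm preservation. Then \(|\1_{\X\setminus E}\cdot a|=\1_{\X\setminus E}\1_E|v|=0\) forces \(\1_{\X\setminus E}\cdot a=0\), \emph{i.e.}, \(a=\1_E\cdot a\), and likewise \(b=\1_{\X\setminus E}\cdot b\); hence \(\1_E\cdot\iota'(v)=\1_E\cdot a+\1_E\cdot b=a=\iota'(\1_E\cdot v)\). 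By \(\R\)-linearity this gives \(\iota'(s\cdot v)=s\cdot\iota'(v)\) for every simple function \(s\), and since every pointwise-norm-preserving linear map is an isometry for the \(L^0\)-distances, approximating \(f\in L^\infty(\mm)\) uniformly by simple functions upgrades this to all of \(L^\infty(\mm)\). With this lemma in hand, your limiting argument for the \(L^0(\mm)\)-linearity of \(\Phi\), and hence the whole uniqueness proof, goes through.
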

\section{Main results}
\subsection{Hilbertian and uniformly convex bundles/modules}\label{s:Hilb_and_UC}
In this section, we prove that a given separable Banach bundle is Hilbert (resp.\ uniformly convex) if and only if its
space of sections is Hilbert (resp.\ uniformly convex). Let us begin with Hilbert bundles/modules.
\begin{theorem}[Hilbert bundles/modules]\label{thm:Hilb_bundles_mod}
Let \((\X,\Sigma,\mm)\) be a \(\sigma\)-finite measure space. Let \(\B\) be a separable Banach space and let \(\mathbf E\)
be a Banach \(\B\)-bundle over \(\X\). Then \(\mathbf E\) is a Hilbert bundle if and only if \(\Gamma_2({\mathbf E})\)
is a Hilbert space. Necessity holds also when \(\B\) is non-separable.
\end{theorem}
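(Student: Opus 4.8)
The statement to prove is an equivalence: $\mathbf E$ is a Hilbert bundle (fibers Hilbert $\mm$-a.e.) if and only if $\Gamma_2(\mathbf E)$ is a Hilbert space. The guiding principle is the criterion recalled just before the theorem: a Banach space is Hilbert iff the parallelogram identity holds, and an $L^2(\mm)$-Banach $L^\infty(\mm)$-module is Hilbert iff the \emph{pointwise} parallelogram identity $|v+w|^2+|v-w|^2=2|v|^2+2|w|^2$ holds $\mm$-a.e. for all $v,w$. So the real content is to connect the pointwise identity on sections with the norm parallelogram identity in the fibers.

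\textbf{Necessity (fibers Hilbert $\Longrightarrow$ $\Gamma_2(\mathbf E)$ Hilbert).} This is the direction that should hold even for non-separable $\B$, and it is the easier one. The plan is to verify the pointwise parallelogram identity in $\Gamma_2(\mathbf E)$ directly. For any $v,w\in\Gamma_2(\mathbf E)$ and $\mm$-a.e.\ $x$, the vectors $v(x),w(x)$ lie in the fiber $\mathbf E(x)$, which by hypothesis is Hilbert for $\mm$-a.e.\ $x$; hence the parallelogram identity holds pointwise in the norm $\|\cdot\|_\B=\|\cdot\|_{\mathbf E(x)}$. Since the pointwise norm on $\Gamma_2(\mathbf E)$ is exactly $|v|=\|v(\cdot)\|_\B$, this reads as the pointwise parallelogram identity $|v+w|^2+|v-w|^2=2|v|^2+2|w|^2$ holding $\mm$-a.e. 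By \cite[Proposition 1.2.21]{Gigli14} this makes $\Gamma_2(\mathbf E)$ a Hilbert module, in particular a Hilbert space. No separability is used, which is why necessity survives the non-separable case.

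\textbf{Sufficiency ($\Gamma_2(\mathbf E)$ Hilbert $\Longrightarrow$ fibers Hilbert $\mm$-a.e.).} Here separability of $\B$ enters and this is the main obstacle: from the \emph{global} parallelogram identity in $\Gamma_2(\mathbf E)$ I must recover the \emph{pointwise} identity in each fiber for $\mm$-a.e.\ $x$. The plan is to use Proposition \ref{prop:spanning_sects} to fix a countable $\mathbb Q$-linear family $\mathcal C\subseteq\bar\Gamma_2(\mathbf E)$ with $\mathbf E(x)=\mathrm{cl}_\B\{v(x):v\in\mathcal C\}$ for every $x$. For each fixed pair $v,w\in\mathcal C$, one localises the global identity: testing the parallelogram law in $\Gamma_2(\mathbf E)$ against $\mathbf 1_A\cdot v,\mathbf 1_A\cdot w$ over arbitrary $A\in\Sigma$ of finite measure forces $|v+w|^2+|v-w|^2=2|v|^2+2|w|^2$ to hold $\mm$-a.e.\ on $\X$ (if it failed on a positive-measure set one gets a contradiction by integrating over that set). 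Since $\mathcal C$ is countable, a single $\mm$-null set $N$ can be discarded outside of which the pointwise parallelogram identity $\|v(x)+w(x)\|_\B^2+\|v(x)-w(x)\|_\B^2=2\|v(x)\|_\B^2+2\|w(x)\|_\B^2$ holds simultaneously for all $v,w\in\mathcal C$.

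The final step is to promote this from the countable dense family to the whole fiber. For $x\notin N$, the set $\{v(x):v\in\mathcal C\}$ is a $\mathbb Q$-linear subspace of $\mathbf E(x)$ whose closure is all of $\mathbf E(x)$, and on it the parallelogram identity holds. Since the map $(\xi,\eta)\mapsto\|\xi+\eta\|_\B^2+\|\xi-\eta\|_\B^2-2\|\xi\|_\B^2-2\|\eta\|_\B^2$ is continuous on $\mathbf E(x)\times\mathbf E(x)$, it vanishes on a dense subset and hence identically; thus $\mathbf E(x)$ satisfies the parallelogram identity and is Hilbert for every $x\notin N$, i.e.\ for $\mm$-a.e.\ $x$. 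The delicate point throughout is the measurability/localisation bookkeeping in passing from the global module identity to an $\mm$-a.e.\ pointwise statement, which is precisely where the countability furnished by Proposition \ref{prop:spanning_sects} is indispensable.
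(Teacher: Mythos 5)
Your proposal is correct and follows essentially the same route as the paper's proof: necessity by integrating the fiberwise parallelogram identity over \(\X\), and sufficiency via the countable \(\mathbb Q\)-linear spanning family of Proposition \ref{prop:spanning_sects} together with localization by indicator functions. The only difference is presentational: the paper argues sufficiency by contradiction (extracting a single pair \(v_n,v_m\) and a positive-measure set where a strict parallelogram inequality holds, then integrating it), whereas you prove the pointwise identity directly for every pair in the family and then pass to the whole fiber by density and continuity --- a slightly cleaner formulation of the same mechanism.
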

\begin{proof}
Suppose \(\B\) is an arbitrary Banach space and \(\mathbf E\) is a Hilbert bundle. Fix \(v,w\in\Gamma_2(\mathbf E)\).
Then \(\|v(x)+w(x)\|_{\mathbf E(x)}^2+\|v(x)-w(x)\|_{\mathbf E(x)}^2=2\|v(x)\|_{\mathbf E(x)}^2+2\|w(x)\|_{\mathbf E(x)}^2\)
for \(\mm\)-a.e.\ \(x\in\X\). By integrating it over \(\X\) we obtain \(\|v+w\|_{\Gamma_2(\mathbf E)}^2+\|v-w\|_{\Gamma_2(\mathbf E)}^2
=2\|v\|_{\Gamma_2(\mathbf E)}^2+2\|w\|_{\Gamma_2(\mathbf E)}^2\), whence it follows that \(\Gamma_2(\mathbf E)\) is a Hilbert module.

Conversely, suppose \(\B\) is separable and \(\Gamma_2(\mathbf E)\) is a Hilbert module. Thanks to Proposition \ref{prop:spanning_sects},
we can find a \(\mathbb Q\)-linear space \((v_n)_{n\in\N}\subseteq\Gamma_2(\mathbf E)\) such that \(\{v_n(x)\,:\,n\in\N\}\) is dense in \(\mathbf E(x)\) for
\(\mm\)-a.e.\ \(x\in\X\). We argue by contradiction: suppose there exists \(P'\in\Sigma\) with \(\mm(P')>0\) such that \(\mathbf E(x)\)
is non-Hilbert for every \(x\in P'\). Hence, there must exist \(n,m\in\N\) and \(P\in\Sigma\), with \(P\subseteq P'\) and \(\mm(P)>0\),
such that for \(\mm\)-a.e.\ \(x\in P\) it holds
\[
\|v_n(x)+v_m(x)\|_{\mathbf E(x)}^2+\|v_n(x)-v_m(x)\|_{\mathbf E(x)}^2<2\|v_n(x)\|_{\mathbf E(x)}^2+2\|v_m(x)\|_{\mathbf E(x)}^2.
\]
By integrating the above inequality over \(P\), we conclude that
\[
\|\1_P\cdot v_n+\1_P\cdot v_m\|_{\Gamma_2(\mathbf E)}^2+\|\1_P\cdot v_n-\1_P\cdot v_m\|_{\Gamma_2(\mathbf E)}^2
<2\|\1_P\cdot v_n\|_{\Gamma_2(\mathbf E)}^2+2\|\1_P\cdot v_m\|_{\Gamma_2(\mathbf E)}^2,
\]
thus leading to a contradiction with the assumption that \(\Gamma_2(\mathbf E)\) is a Hilbert module.
\end{proof}
Next we aim at obtaining the analogue of Theorem \ref{thm:Hilb_bundles_mod}, but with the term `Hilbert' replaced by `uniformly convex'.
Its proof, which is more involved than the one for the Hilbertian case, requires some auxiliary notions and results. More precisely,
we have to work with an intermediate concept of `pointwise uniform convexity', which we introduce in Definition \ref{def:ptwse_unif_conv}.
It is a slight generalisation of the notion of random uniform convexity, proposed and studied by Guo--Zeng in \cite{GZRNM,GZRNM2}.
In Theorem \ref{thm:ptwse_unif_conv_equiv} we will extend their main result to \(\sigma\)-finite measures.
\begin{definition}[Pointwise uniform convexity]\label{def:ptwse_unif_conv}
Let \((\X,\Sigma,\mm)\) be a \(\sigma\)-finite measure space and let \(\mathscr M\) be an \(L^p(\mm)\)-Banach \(L^\infty(\mm)\)-module
for some exponent \(p\in(1,\infty)\). Let us define the \textbf{pointwise modulus of convexity} \(\delta_{\mathscr M}^{\rm pw}\colon
(0,2]\to L^\infty(\mm)\) associated with the space \(\mathscr M\) as
\[
\delta_{\mathscr M}^{\rm pw}(\varepsilon)\coloneqq\bigwedge\bigg\{1-\1_E\bigg|\frac{v+w}{2}\bigg|\;\bigg|\;
v,w\in\mathscr M,\,E\in\Sigma,\,|v|=|w|=1\text{ and }|v-w|>\varepsilon\text{ }\mm\text{-a.e.\ on }E\bigg\},
\]
for every \(\varepsilon\in(0,2]\). Then we say that \(\mathscr M\) is \textbf{pointwise uniformly convex} if and only if
\[
{\rm ess\,inf}\,\delta_{\mathscr M}^{\rm pw}(\varepsilon)>0,\quad\text{ for every }\varepsilon\in(0,2].
\]
\end{definition}
The very same definition of the function \(\delta_{\mathscr M}^{\rm pw}\) can be given also
in the case where \(\mathscr M\) is an \(L^0(\mm)\)-Banach \(L^0(\mm)\)-module. When \(\mm\) is a probability measure,
the notion of pointwise uniform convexity introduced in Definition \ref{def:ptwse_unif_conv} coincides with the one of
\emph{random uniform convexity} (see \cite[Definition 4.1]{GZRNM}). One can also easily verify that, given an \(L^p(\mm)\)-Banach
\(L^\infty(\mm)\)-module and called \(\mathscr M^0\) its \(L^0(\mm)\)-completion, it holds that
\(\delta_{\mathscr M^0}^{\rm pw}=\delta_{\mathscr M}^{\rm pw}\). In particular, given a Banach \(\B\)-bundle \(\mathbf E\) on \(\X\)
and two exponents \(p,p'\in(1,\infty)\), one has \(\delta_{\Gamma_p(\mathbf E)}^{\rm pw}=\delta_{\Gamma_{p'}(\mathbf E)}^{\rm pw}\).
\medskip

In the following result we prove the `easy implication' of Theorem \ref{thm:ptwse_unif_conv_equiv}.
The proof argument is strongly inspired by \cite[Theorem 4.3]{GZRNM}.
\begin{lemma}\label{lem:partial_pw_unif_convex}
Let \((\X,\Sigma,\mm)\) be a \(\sigma\)-finite measure space and let \(\mathscr M\) be an \(L^p(\mm)\)-Banach
\(L^\infty(\mm)\)-module for some exponent \(p\in(1,\infty)\). Then it holds that
\begin{equation}\label{eq:partial_pw_unif_convex}
\delta_{\mathscr M}(\varepsilon)\leq{\rm ess\,inf}\,\delta_{\mathscr M}^{\rm pw}(\varepsilon),
\quad\text{ for every }\varepsilon\in(0,2].
\end{equation}
\end{lemma}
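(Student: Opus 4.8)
The plan is to reduce the claimed scalar inequality to a pointwise estimate and then prove that estimate by a localisation-and-renormalisation argument played against the Banach-space modulus of convexity of \((\mathscr M,\|\cdot\|_{\mathscr M})\). Since an essential infimum of a function dominates a constant \(c\) precisely when the function itself dominates \(c\) \(\mm\)-a.e., it is enough to show that \(\delta_{\mathscr M}(\varepsilon)\leq\delta_{\mathscr M}^{\rm pw}(\varepsilon)\) holds \(\mm\)-a.e. Recalling that \(\delta_{\mathscr M}^{\rm pw}(\varepsilon)\) is the essential infimum \(\bigwedge\) of the family of functions \(1-\1_E\big|\tfrac{v+w}{2}\big|\), and that \(\bigwedge\) is by definition the greatest \(\mm\)-a.e.\ lower bound of that family, I would verify that the constant \(\delta_{\mathscr M}(\varepsilon)\) is itself such a lower bound. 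That is, for every admissible triple \((v,w,E)\) with \(|v|=|w|=1\) and \(|v-w|>\varepsilon\) \(\mm\)-a.e.\ on \(E\), I would prove
\[
\delta_{\mathscr M}(\varepsilon)\leq 1-\1_E\bigg|\frac{v+w}{2}\bigg|\quad\mm\text{-a.e.}
\]
Off \(E\) this is immediate, since there the right-hand side equals \(1\) while \(\delta_{\mathscr M}(\varepsilon)\in[0,1]\); hence everything reduces to the pointwise estimate \(\big|\tfrac{v+w}{2}\big|\leq 1-\delta_{\mathscr M}(\varepsilon)\) \(\mm\)-a.e.\ on \(E\).

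To establish this pointwise estimate I would argue by contradiction. If it failed, then by \(\sigma\)-finiteness I could select \(\eta>0\) and a set \(F\subseteq E\) with \(0<\mm(F)<+\infty\) on which \(\big|\tfrac{v+w}{2}\big|\geq 1-\delta_{\mathscr M}(\varepsilon)+\eta\) holds \(\mm\)-a.e. The idea is then to localise and renormalise: I set \(\tilde v\coloneqq\mm(F)^{-1/p}\,\1_F\cdot v\) and \(\tilde w\coloneqq\mm(F)^{-1/p}\,\1_F\cdot w\), which lie in \(\mathscr M\) by the \(L^\infty(\mm)\)-module structure. Using \eqref{eq:ptwse_norm3} together with \(|v|=|w|=1\) on \(F\), one computes \(|\tilde v|=|\tilde w|=\mm(F)^{-1/p}\1_F\), so that \(\|\tilde v\|_{\mathscr M}=\|\tilde w\|_{\mathscr M}=1\), i.e.\ \(\tilde v,\tilde w\in\mathbb S_{\mathscr M}\). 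Moreover, since \(|v-w|>\varepsilon\) \(\mm\)-a.e.\ on \(F\), the \(L^p\)-nature of the norm gives
\[
\|\tilde v-\tilde w\|_{\mathscr M}=\mm(F)^{-1/p}\bigg(\int_F|v-w|^p\,\d\mm\bigg)^{1/p}\geq\varepsilon.
\]

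At this stage the definition of the Banach-space modulus of convexity \(\delta_{\mathscr M}\) applies to the pair \(\tilde v,\tilde w\) and yields \(\big\|\tfrac{\tilde v+\tilde w}{2}\big\|_{\mathscr M}\leq 1-\delta_{\mathscr M}(\varepsilon)\). On the other hand, the lower bound for \(\big|\tfrac{v+w}{2}\big|\) on \(F\) gives \(\big\|\tfrac{\tilde v+\tilde w}{2}\big\|_{\mathscr M}=\mm(F)^{-1/p}\big(\int_F\big|\tfrac{v+w}{2}\big|^p\,\d\mm\big)^{1/p}\geq 1-\delta_{\mathscr M}(\varepsilon)+\eta\), a contradiction. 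This proves the pointwise estimate, hence that \(\delta_{\mathscr M}(\varepsilon)\) is an \(\mm\)-a.e.\ lower bound for the defining family, and therefore \eqref{eq:partial_pw_unif_convex}. I expect the renormalisation to be the only delicate point: one must invoke \(\sigma\)-finiteness to secure a set \(F\) of finite positive measure so that the scalings by \(\mm(F)^{-1/p}\) are legitimate and place \(\tilde v,\tilde w\) exactly on \(\mathbb S_{\mathscr M}\), and one must combine \eqref{eq:ptwse_norm3} with the \(L^p\)-structure of \(\|\cdot\|_{\mathscr M}\) in order to turn the a.e.\ pointwise hypotheses into the global norm inequalities required to activate the definition of \(\delta_{\mathscr M}\).
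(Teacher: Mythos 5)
Your proof is correct and follows essentially the same route as the paper's: both reduce \eqref{eq:partial_pw_unif_convex} to the pointwise bound \(\big|\frac{v+w}{2}\big|\leq 1-\delta_{\mathscr M}(\varepsilon)\) \(\mm\)-a.e.\ on \(E\) and establish it via the localised, renormalised elements \(\mm(F)^{-1/p}\1_F\cdot v\), \(\mm(F)^{-1/p}\1_F\cdot w\) on finite-measure subsets \(F\subseteq E\), fed into the definition of \(\delta_{\mathscr M}\). The only difference is presentational: you phrase the last step as a contradiction on a set where the bound fails by \(\eta\), while the paper concludes directly from the arbitrariness of \(F\).
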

\begin{proof}
Fix \(\varepsilon\in(0,2]\). Take also \(v,w\in\mathscr M\) and \(E\in\Sigma\) with \(\mm(E)>0\) such that \(|v|=|w|=1\)
and \(|v-w|>\varepsilon\) hold \(\mm\)-a.e.\ on \(E\). Given any set \(F\in\Sigma\) such that \(F\subseteq E\) and
\(0<\mm(F)<+\infty\), we define \(v_F,w_F\in\mathscr M\) as \(v_F\coloneqq\mm(F)^{-1/p}\1_F\cdot v\) and
\(w_F\coloneqq\mm(F)^{-1/p}\1_F\cdot w\). Observe that
\[
\|v_F\|_{\mathscr M}=\bigg(\int|v_F|^p\,\d\mm\bigg)^{\frac{1}{p}}=\bigg(\fint_F|v|^p\,\d\mm\bigg)^{\frac{1}{p}}=1.
\]
By similar computations, one can show that \(\|w_F\|_{\mathscr M}=1\) and \(\|v_F-w_F\|_{\mathscr M}\geq\varepsilon\).
Consequently, we have that \(1-\|v_F+w_F\|_{\mathscr M}/2\geq\delta_{\mathscr M}(\varepsilon)\), which can be equivalently rewritten as
\[
\fint_F\bigg|\frac{v+w}{2}\bigg|^p\,\d\mm=\int\bigg|\frac{v_F+w_F}{2}\bigg|^p\,\d\mm=
\frac{\|v_F+w_F\|_{\mathscr M}^p}{2^p}\leq\big(1-\delta_{\mathscr M}(\varepsilon)\big)^p.
\]
Thanks to the arbitrariness of \(F\), we deduce that \(\big|\frac{v+w}{2}\big|^p\leq\big(1-\delta_{\mathscr M}(\varepsilon)\big)^p\)
holds \(\mm\)-a.e.\ on \(E\). This implies that \(\delta_{\mathscr M}^{\rm pw}(\varepsilon)\geq\delta_{\mathscr M}(\varepsilon)\)
holds \(\mm\)-a.e.\ on \(\X\), which is equivalent to \eqref{eq:partial_pw_unif_convex}.
\end{proof}
The following theorem, which states that the pointwise uniform convexity of any given Banach module is equivalent to its
uniform convexity as a Banach space, is an almost direct consequence of a beautiful result obtained by Guo--Zeng in \cite{GZRNM,GZRNM2}.
\begin{theorem}\label{thm:ptwse_unif_conv_equiv}
Let \((\X,\Sigma,\mm)\) be a \(\sigma\)-finite measure space and let \(\mathscr M\) be an \(L^p(\mm)\)-Banach \(L^\infty(\mm)\)-module for
some exponent \(p\in(1,\infty)\). Then \(\mathscr M\) is pointwise uniformly convex if and only if it is uniformly convex as a Banach space.
In this case, for every \(\varepsilon\in(0,2]\) it holds that
\begin{equation}\label{eq:bounds_mod_convex}
\phi_p\big(\varepsilon,{\rm ess\,inf}\,\delta_{\mathscr M}^{\rm pw}(\varepsilon/5)\big)
\leq\delta_{\mathscr M}(\varepsilon)\leq{\rm ess\,inf}\,\delta_{\mathscr M}^{\rm pw}(\varepsilon),
\end{equation}
for some \(\phi_p\colon(0,2]\times(0,1]\to(0,1]\) that is continuous and non-decreasing in each variable.
\end{theorem}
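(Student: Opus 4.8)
The plan is to treat the two inequalities in \eqref{eq:bounds_mod_convex} separately. The upper bound is exactly the content of Lemma \ref{lem:partial_pw_unif_convex}, and it already yields one implication: if \(\mathscr M\) is uniformly convex as a Banach space, so that \(\delta_{\mathscr M}(\varepsilon)>0\) for every \(\varepsilon\in(0,2]\), then \({\rm ess\,inf}\,\delta_{\mathscr M}^{\rm pw}(\varepsilon)\geq\delta_{\mathscr M}(\varepsilon)>0\), whence \(\mathscr M\) is pointwise uniformly convex. The genuine work lies in the lower bound, which conversely forces \(\delta_{\mathscr M}(\varepsilon)>0\) as soon as \({\rm ess\,inf}\,\delta_{\mathscr M}^{\rm pw}(\varepsilon/5)>0\), since \(\phi_p\) takes values in \((0,1]\). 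The overall strategy for this bound is to reduce the \(\sigma\)-finite setting to the probability setting, where the quantitative estimate of Guo--Zeng \cite{GZRNM,GZRNM2} applies verbatim.

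First I would fix a probability measure \(\mm'\) on \(\Sigma\) with \(\mm\ll\mm'\ll\mm\); writing \(\mm'=g\cdot\mm\) with \(g>0\) holding \(\mm\)-a.e., such a measure exists by \(\sigma\)-finiteness. Since \(\mm\) and \(\mm'\) share the same null sets, one has \(L^0(\mm)=L^0(\mm')\) and \(L^\infty(\mm)=L^\infty(\mm')\), so the \(L^0(\mm)\)-completion \(\mathscr M^0\) of \(\mathscr M\) is simultaneously the \(L^0(\mm')\)-completion of the \(L^p(\mm')\)-Banach \(L^\infty(\mm')\)-module \(\mathscr M'\coloneqq\big\{u\in\mathscr M^0\,:\,|u|\in L^p(\mm')\big\}\), endowed with the pointwise norm inherited from \(\mathscr M^0\). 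Invoking the invariance of the pointwise modulus under \(L^0\)-completion recalled after Definition \ref{def:ptwse_unif_conv}, this gives \(\delta_{\mathscr M'}^{\rm pw}=\delta_{\mathscr M^0}^{\rm pw}=\delta_{\mathscr M}^{\rm pw}\); moreover the \({\rm ess\,inf}\) is unchanged when computed with respect to \(\mm'\) rather than \(\mm\), because the two measures are mutually absolutely continuous.

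The crux is then to identify the two Banach-space moduli \(\delta_{\mathscr M}\) and \(\delta_{\mathscr M'}\). To this end I would consider the map \(U\colon\mathscr M\to\mathscr M'\) defined by \(U(v)\coloneqq g^{-1/p}\cdot v\), computed inside \(\mathscr M^0\). It is \(\R\)-linear, and a direct change-of-measure computation shows that it is a surjective isometry: indeed, \(|U(v)|=g^{-1/p}|v|\) \(\mm\)-a.e., whence
\[
\|U(v)\|_{\mathscr M'}^p=\int\big(g^{-1/p}|v|\big)^p\,\d\mm'=\int g^{-1}|v|^p\,\d\mm'=\int|v|^p\,\d\mm=\|v\|_{\mathscr M}^p,
\]
and surjectivity follows by running the same computation for the inverse \(u\mapsto g^{1/p}\cdot u\). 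Since the modulus of convexity is an isometric invariant of a Banach space, this yields \(\delta_{\mathscr M}(\varepsilon)=\delta_{\mathscr M'}(\varepsilon)\) for every \(\varepsilon\in(0,2]\).

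At this point \(\mathscr M'\) is an \(L^p(\mm')\)-Banach \(L^\infty(\mm')\)-module over the probability space \((\X,\Sigma,\mm')\), so its pointwise uniform convexity coincides with the random uniform convexity studied by Guo--Zeng, and their quantitative theorem \cite{GZRNM,GZRNM2} furnishes the function \(\phi_p\colon(0,2]\times(0,1]\to(0,1]\), continuous and non-decreasing in each variable, with \(\phi_p\big(\varepsilon,{\rm ess\,inf}\,\delta_{\mathscr M'}^{\rm pw}(\varepsilon/5)\big)\leq\delta_{\mathscr M'}(\varepsilon)\); substituting the identities just established produces the lower bound in \eqref{eq:bounds_mod_convex}. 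I expect the main obstacle to be bookkeeping rather than conceptual, namely verifying that \(\mathscr M'\) really is the \(L^p(\mm')\)-module whose \(L^0\)-completion is \(\mathscr M^0\). Here the passage to a probability measure is essential: on \((\X,\Sigma,\mm')\) every bounded element of \(\mathscr M^0\) lies in \(\mathscr M'\), so truncating an arbitrary \(u\in\mathscr M^0\) by \(\1_{\{|u|\leq k\}}\cdot u\) yields a sequence in \(\mathscr M'\) converging to \(u\) in the \(L^0\)-topology, which gives the required density and legitimises the use of the Guo--Zeng framework; once the reduction is set up cleanly, the hard analytic estimate is entirely supplied by their result.
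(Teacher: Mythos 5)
Your argument is correct, but it takes a genuinely different route from the paper's. Both proofs handle the upper bound in \eqref{eq:bounds_mod_convex} and the sufficiency implication via Lemma \ref{lem:partial_pw_unif_convex}, and both ultimately rest on the Guo--Zeng estimate in the probability case, i.e.\ \(\phi_p\big(\varepsilon,{\rm ess\,inf}\,\delta_{\mathscr M}^{\rm pw}(\varepsilon/4)\big)\leq\delta_{\mathscr M}(\varepsilon)\) when \(\mm(\X)=1\) (note the constant: the inspection of Guo--Zeng's proof yields \(\varepsilon/4\), not \(\varepsilon/5\) as you state; your weaker form then follows from the monotonicity of \({\rm ess\,inf}\,\delta_{\mathscr M}^{\rm pw}(\cdot)\) and of \(\phi_p\) in its second variable). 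The difference lies in the reduction from \(\sigma\)-finite to probability measures. The paper exhausts \(\X\) by finite-measure sets \(E_n\), views each localisation \(\mathscr M|_{E_n}\) as a module over the normalised measure \(\mm(E_n)^{-1}\mm|_{E_n}\), applies the probability-case bound there, and passes to the limit along normalised truncations of a fixed pair \(v,w\); it is exactly this limiting step that degrades the constant from \(\varepsilon/4\) to \(\varepsilon/5\). Your change-of-measure isometry \(U(v)=g^{-1/p}\cdot v\) eliminates the limiting procedure altogether: it identifies \(\delta_{\mathscr M}\) with \(\delta_{\mathscr M'}\) in one stroke and in fact proves the stronger inequality with \(\varepsilon/4\) in place of \(\varepsilon/5\), which implies \eqref{eq:bounds_mod_convex}. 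This is cleaner and quantitatively sharper than the paper's argument.

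One step of your sketch does need to be promoted to a lemma, namely the surjectivity of \(U\). Running the computation for \(u\mapsto g^{1/p}\cdot u\) only shows that, for \(u\in\mathscr M'\), the element \(g^{1/p}\cdot u\in\mathscr M^0\) has pointwise norm in \(L^p(\mm)\); to conclude it lies in the image of \(\mathscr M\) you need the recovery fact \(\iota(\mathscr M)=\big\{w\in\mathscr M^0\,:\,|w|\in L^p(\mm)\big\}\). This is true but is not a ``computation'': given such a \(w\), pick \(v_n\in\mathscr M\) with \(\iota(v_n)\to w\) in \(\mathscr M^0\), hence (along a subsequence) \(|\iota(v_n)-w|\to 0\) \(\mm\)-a.e.; set \(A_n\coloneqq\big\{|\iota(v_n)-w|\leq|w|\big\}\) and \(w_n\coloneqq\1_{A_n}\cdot v_n\in\mathscr M\). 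Then \(|\iota(w_n)-w|=\1_{A_n}|\iota(v_n)-w|+\1_{\X\setminus A_n}|w|\to 0\) \(\mm\)-a.e.\ and is dominated by \(|w|\in L^p(\mm)\), so by dominated convergence \((w_n)_n\) is Cauchy in \(\mathscr M\), and its limit is sent by \(\iota\) to \(w\). The remaining bookkeeping you flag -- completeness of \(\mathscr M'\) (so that the probability-case statement applies to it) and the identification of \(\mathscr M^0\) as the \(L^0(\mm')\)-completion of \(\mathscr M'\) (so that \(\delta^{\rm pw}_{\mathscr M'}=\delta^{\rm pw}_{\mathscr M^0}=\delta^{\rm pw}_{\mathscr M}\)) -- is indeed routine, the latter by exactly the truncation you describe. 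With these details filled in, your proof is complete.
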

\begin{proof}
We first deal with the case where \(\mm\) is a probability measure. Call \(\mathscr M^0\) the \(L^0(\mm)\)-completion
of \(\mathscr M\), which is a complete random normed module over \(\R\) with base \((\X,\Sigma,\mm)\). The fact that \(\mathscr M\) is pointwise
uniformly convex if and only if it is uniformly convex in the sense of Banach spaces was proved in \cite[Theorem 1.3]{GZRNM2} after
\cite[Theorems 4.3 and 4.4]{GZRNM}; the sufficiency part and the upper bound in \eqref{eq:bounds_mod_convex} follow also from
Lemma \ref{lem:partial_pw_unif_convex}. By inspecting the proof of \cite[Proposition 4.5]{GZRNM}, one can see that there exists
a function \(\phi_p\colon(0,2]\times(0,1]\to(0,1]\) that is continuous and non-decreasing in each variable, and that for any
\(\varepsilon\in(0,2]\) satisfies
\begin{equation}\label{eq:bounds_mod_convex_stronger}
\phi_p\big(\varepsilon,{\rm ess\,inf}\,\delta_{\mathscr M}^{\rm pw}(\varepsilon/4)\big)\leq\delta_{\mathscr M}(\varepsilon),\quad\text{ when }\mm(\X)=1.
\end{equation}
Next we explain how to drop the finiteness assumption on \(\mm\). For an arbitrary \(\sigma\)-finite measure \(\mm\), the sufficiency
part of the statement and the upper bound in \eqref{eq:bounds_mod_convex} follow, as before, directly from Lemma \ref{lem:partial_pw_unif_convex}.
Concerning the lower bound in \eqref{eq:bounds_mod_convex}, which in turn implies the necessity part of the statement, we argue as follows.
First, we set \(f(\varepsilon')\coloneqq{\rm ess\,inf}\,\delta_{\mathscr M}^{\rm pw}(\varepsilon')\) for every \(\varepsilon'\in(0,2]\).
Notice that \(f\) is non-decreasing by construction.
Now fix any \(\varepsilon\in(0,2]\). We may suppose that \(f(\varepsilon/5)>0\), otherwise there is nothing to prove.
Pick some sequence \((\varepsilon_k)_{k\in\N}\subseteq(4\varepsilon/5,\varepsilon)\) such that \(\varepsilon_k\to\varepsilon\).
Fix an increasing sequence of sets \((E_n)_{n\in\N}\subseteq\Sigma\) such that \(0<\mm(E_n)<+\infty\) for every \(n\in\N\) and
\(\bigcup_{n\in\N}E_n=\X\). The localised space \(\mathscr M|_{E_n}\) can be regarded as an \(L^p(\mm_n)\)-Banach \(L^\infty(\mm_n)\)-module,
where \(\mm_n\) stands for the normalised measure \(\mm(E_n)^{-1}\mm|_{E_n}\). Given that, trivially, \(\delta_{\mathscr M|_{E_n}}^{\rm pw}(\varepsilon')
\geq\delta_{\mathscr M}^{\rm pw}(\varepsilon')\) holds \(\mm_n\)-a.e.\ for every \(\varepsilon'\in(0,2]\), we deduce from \eqref{eq:bounds_mod_convex_stronger}
that \(\delta_{\mathscr M|_{E_n}}(\varepsilon')\geq\phi_p\big(\varepsilon',f(\varepsilon'/4)\big)\) for every \(\varepsilon'\in(0,2]\).
Now fix \(k\in\N\) and \(v,w\in\mathscr M\) such that \(\|v\|_{\mathscr M}=\|w\|_{\mathscr M}=1\) and \(\|v-w\|_{\mathscr M}\geq\varepsilon_k\).
We define \(v_n,w_n\in\mathscr M|_{E_n}\) as
\[
v_n\coloneqq\bigg(\frac{\mm(E_n)}{\int_{E_n}|v|^p\,\d\mm}\bigg)^{\frac{1}{p}}\1_{E_n}\cdot v,\qquad
w_n\coloneqq\bigg(\frac{\mm(E_n)}{\int_{E_n}|w|^p\,\d\mm}\bigg)^{\frac{1}{p}}\1_{E_n}\cdot w,
\]
for every \(n\in\N\) sufficiently big (so that \(\int_{E_n}|v|^p\,\d\mm\) and \(\int_{E_n}|w|^p\,\d\mm\) are non-zero). Hence,
recalling that we are viewing \(\mathscr M|_{E_n}\) as a Banach module over \((\X,\Sigma,\mm_n)\), we have that
\[
\|v_n\|_{\mathscr M|_{E_n}}=\bigg(\int|v_n|^p\,\d\mm_n\bigg)^{\frac{1}{p}}=
\bigg(\frac{\mm(E_n)}{\int_{E_n}|v|^p\,\d\mm}\int_{E_n}|v|^p\,\d\mm_n\bigg)^{\frac{1}{p}}=1
\]
and similarly \(\|w_n\|_{\mathscr M|_{E_n}}=1\). Moreover, by dominated convergence theorem we deduce that
\[
\tilde\varepsilon_n\coloneqq\|v_n-w_n\|_{\mathscr M|_{E_n}}=\bigg(\int_{E_n}\bigg|\frac{v}{\big(\int_{E_n}|v|^p\,\d\mm\big)^{1/p}}
-\frac{w}{\big(\int_{E_n}|w|^p\,\d\mm\big)^{1/p}}\bigg|^p\,\d\mm\bigg)^{\frac{1}{p}}\overset{n}\rightarrow\|v-w\|_{\mathscr M}.
\]
Analogous computations yield \(\big\|\frac{v_n+w_n}{2}\big\|_{\mathscr M|_{E_n}}\to\big\|\frac{v+w}{2}\big\|_{\mathscr M}\) as \(n\to\infty\).
We thus obtain that
\[\begin{split}
1-\bigg\|\frac{v+w}{2}\bigg\|_{\mathscr M}&=\lim_{n\to\infty}\bigg(1-\bigg\|\frac{v_n+w_n}{2}\bigg\|_{\mathscr M|_{E_n}}\bigg)\geq
\lims_{n\to\infty}\delta_{\mathscr M|_{E_n}}(\tilde\varepsilon_n)\geq\lims_{n\to\infty}\phi_p\big(\tilde\varepsilon_n,f(\tilde\varepsilon_n/4)\big)\\
&\geq\lim_{n\to\infty}\phi_p\big(\tilde\varepsilon_n,f(\varepsilon/5)\big)\overset{\star}=
\phi_p\big(\|v-w\|_{\mathscr M},f(\varepsilon/5)\big)\geq\phi_p\big(\varepsilon_k,f(\varepsilon/5)\big),
\end{split}\]
where in the passage from the first to the second line we used the fact that \(\tilde\varepsilon_n/4>\varepsilon/5\) holds for all \(n\in\N\)
sufficiently large, while for the starred equality we used the continuity of the function \(\varepsilon'\mapsto\phi_p\big(\varepsilon',f(\varepsilon/5)\big)\).
This implies \(\delta_{\mathscr M}(\varepsilon)\geq\delta_{\mathscr M}(\varepsilon_k)\geq\phi_p\big(\varepsilon_k,f(\varepsilon/5)\big)\)
for every \(k\in\N\). By letting \(k\to\infty\),
we can finally conclude that \(\delta_{\mathscr M}(\varepsilon)\geq\phi_p\big(\varepsilon,f(\varepsilon/5)\big)\), as desired.
\end{proof}
Before stating the main result of this section, we point out the following technical fact.
\begin{remark}{\rm
Let \((\X,\Sigma,\mm)\) be a \(\sigma\)-finite measure space, \(\B\) a separable Banach space, \(\mathbf E\) a Banach \(\B\)-bundle over \(\X\).
Then \(\X\ni x\mapsto\delta_{\mathbf E(x)}(\varepsilon)\in[0,1]\) is measurable for every \(\varepsilon\in(0,2]\).

Indeed, given any \(\mathcal C\) as in Proposition \ref{prop:spanning_sects}, we have that
\(\big\{v(x)/\|v(x)\|_\B\,:\,v\in\mathcal C,\,v(x)\neq 0\big\}\) is dense in \(\mathbb S_{\mathbf E(x)}\) for every \(x\in\X\),
thus Remark \ref{rmk:check_unif_conv_on_dense} ensures that \(\delta_{\mathbf E(x)}(\varepsilon)\) can be written as
\[
\inf\bigg\{1-\frac{1}{2}\bigg\|\frac{v(x)}{\|v(x)\|_\B}+\frac{w(x)}{\|w(x)\|_\B}\bigg\|_\B\;\bigg|\;
v,w\in\mathcal C,\,v(x),w(x)\neq 0,\,\bigg\|\frac{v(x)}{\|v(x)\|_\B}-\frac{w(x)}{\|w(x)\|_\B}\bigg\|_\B>\varepsilon\bigg\},
\]
for every \(x\in\X\). In particular, the function \(\delta_{\mathbf E(\cdot)}(\varepsilon)\), which can be expressed as
a countable infimum of measurable functions, is measurable. This yields the claim.
\fr}\end{remark}
Finally, we are in a position to prove the equivalence between the uniform convexity of a given separable Banach bundle and the uniform
convexity of its space of sections.
\begin{theorem}[Uniformly convex bundles/modules]\label{thm:unif_conv_bundles_mod}
Let \((\X,\Sigma,\mm)\) be a \(\sigma\)-finite measure space. Let \(\B\) be a separable Banach space and let \(\mathbf E\)
be a Banach \(\B\)-bundle over \(\X\). Then
\begin{equation}\label{eq:equiv_to_pw_2}
\delta_{\mathbf E(x)}(\varepsilon)=\delta_{\Gamma_p(\mathbf E)}^{\rm pw}(\varepsilon)(x),
\quad\text{ for every }\varepsilon\in(0,2]\text{ and }\mm\text{-a.e.\ }x\in\X.
\end{equation}
Moreover, the following two conditions are equivalent:
\begin{itemize}
\item[\(\rm i)\)] \(\mathbf E\) is a uniformly convex bundle and \({\rm ess\,inf}\,\delta_{\mathbf E(\cdot)}(\varepsilon)>0\)
for every \(\varepsilon>0\).
\item[\(\rm ii)\)] \(\Gamma_p({\mathbf E})\) is uniformly convex for every (or, equivalently, for some) exponent \(p\in(1,\infty)\).
\end{itemize}
More precisely, letting the function \(\phi_p\colon(0,2]\times(0,1]\to(0,1]\) be as in Theorem \ref{thm:ptwse_unif_conv_equiv}, it holds
\[
{\rm ess\,inf}\,\phi_p\big(\varepsilon,\delta_{\mathbf E(\cdot)}(\varepsilon/5)\big)\leq\delta_{\Gamma_p(\mathbf E)}(\varepsilon)
\leq{\rm ess\,inf}\,\delta_{\mathbf E(\cdot)}(\varepsilon),\quad\text{ for every }\varepsilon\in(0,2]\text{ and }p\in(1,\infty).
\]
\end{theorem}
\begin{proof}
Fix some \(\mathcal C\) as in Proposition \ref{prop:spanning_sects}.
For any \(v\in\mathcal C\), choose a representative \(\bar v\in\bar\Gamma_\infty(\mathbf E)\) of
\((\1_{\{|v|>0\}}|v|^{-1})\cdot v\) and define \(A_v\coloneqq\{|\bar v|>0\}=\{|\bar v|=1\}\in\Sigma\). We claim that
\begin{equation}\label{eq:equiv_to_pw_aux1}
\delta_{\Gamma_p(\mathbf E)}^{\rm pw}(\varepsilon)=\bigwedge\bigg\{1-\1_E\bigg|\frac{\bar v+\bar w}{2}\bigg|\;\bigg|\;
v,w\in\mathcal C,\,E\in\Sigma,\,E\subseteq A_v\cap A_w,\,|\bar v-\bar w|>\varepsilon\text{ in }E\bigg\}
\end{equation}
holds in the \(\mm\)-a.e.\ sense for any given \(\varepsilon\in(0,2]\). The inequality \(\leq\) is obvious, so let us focus on the converse one.
Pick any \(u,z\in\Gamma_p(\mathbf E)\) and \(E\in\Sigma\) such that \(|u|=|z|=1\) and \(|u-z|>\varepsilon\) \(\mm\)-a.e.\ on \(E\). Fix
\(\lambda\in(0,1)\). We aim at finding a partition \((E_k)_{k\in\N}\subseteq\Sigma\) of \(E\) up to \(\mm\)-null sets and vector fields
\(v_k,w_k\in\mathcal C\) with \(E_k\subseteq A_{v_k}\cap A_{w_k}\) such that \(|\bar v_k-\bar w_k|>\varepsilon\) on \(E_k\) and
\begin{equation}\label{eq:equiv_to_pw_aux2}
1-\1_{E_k}\bigg|\frac{\bar v_k+\bar w_k}{2}\bigg|\leq 1-\1_{E_k}\bigg|\frac{u+z}{2}\bigg|+\lambda,\quad\text{ in the }\mm\text{-a.e.\ sense.}
\end{equation}
First, set \(\tilde E_1\coloneqq\big\{x\in E\,:\,|u-z|(x)>\varepsilon+\lambda\big\}\) and
\(\tilde E_{i+1}\coloneqq\big\{x\in E\,:\,|u-z|(x)>\varepsilon+\frac{\lambda}{i+1}\big\}\setminus\tilde E_i\) for every \(i\in\N\).
Note that \((\tilde E_i)_{i\in\N}\subseteq\Sigma\) is a partition of \(E\) up to \(\mm\)-null sets. Given any \(i\in\N\),
it follows from Proposition \ref{prop:spanning_sects} that there exists a partition \((\tilde E_{i,j})_{j\in\N}\subseteq\Sigma\)
of the set \(\tilde E_i\) and sequences \((v_{i,j})_{j\in\N},(w_{i,j})_{j\in\N}\subseteq\mathcal C\) such that
\(|v_{i,j}-u|,|w_{i,j}-z|\leq\frac{\lambda}{4(i+1)}\) holds \(\mm\)-a.e.\ in \(\tilde E_{i,j}\). In particular,
\(|v_{i,j}|,|w_{i,j}|\geq 1-\frac{\lambda}{4(i+1)}\) holds \(\mm\)-a.e.\ on \(\tilde E_{i,j}\), thus up to removing an \(\mm\)-null set from
\(\tilde E_{i,j}\) we can assume \(\tilde E_{i,j}\subseteq A_{v_{i,j}}\cap A_{w_{i,j}}\). We have the following \(\mm\)-a.e.\ estimates on \(\tilde E_{i,j}\):
\begin{equation}\label{eq:equiv_to_pw_aux3}\begin{split}
|\bar v_{i,j}-u|&\leq\bigg|\frac{v_{i,j}}{|v_{i,j}|}-v_{i,j}\bigg|+|v_{i,j}-u|=\big|1-|v_{i,j}|\big|+|v_{i,j}-u|\leq\frac{\lambda}{2(i+1)},\\
|\bar w_{i,j}-z|&\leq\frac{\lambda}{2(i+1)}.
\end{split}\end{equation}
Consequently, we have that \(|\bar v_{i,j}-\bar w_{i,j}|\geq|u-z|-|\bar v_{i,j}-u|-|\bar w_{i,j}-z|
>\varepsilon+\frac{\lambda}{i+1}-\frac{\lambda}{i+1}=\varepsilon\) holds
\(\mm\)-a.e.\ on \(\tilde E_{i,j}\). Moreover, again in the \(\mm|_{\tilde E_{i,j}}\)-a.e.\ sense, we can estimate
\[
1-\bigg|\frac{\bar v_{i,j}+\bar w_{i,j}}{2}\bigg|\leq 1-\bigg|\frac{u+z}{2}\bigg|+\frac{|\bar v_{i,j}-u|+|\bar w_{i,j}-z|}{2}
\overset{\eqref{eq:equiv_to_pw_aux3}}\leq 1-\bigg|\frac{u+z}{2}\bigg|+\frac{\lambda}{2(i+1)}<1-\bigg|\frac{u+z}{2}\bigg|+\lambda.
\]
Therefore, relabeling the countable family \(\big\{(\tilde E_{i,j},v_{i,j},w_{i,j})\,:\,i,j\in\N\big\}\) as
\(\big((E_k,v_k,w_k)\big)_{k\in\N}\), we have obtained \eqref{eq:equiv_to_pw_aux2}.
All in all, the claim \eqref{eq:equiv_to_pw_aux1} is proved. Now observe that there exists a set \(N\in\Sigma\) with \(\mm(N)=0\)
such that \(\big\{\bar v(x)\,:\,v\in\mathcal C,\,\bar v(x)\neq 0\big\}\) is dense in \(\mathbb S_{\mathbf E(x)}\) for every point
\(x\in\X\setminus N\). Hence, Remark \ref{rmk:check_unif_conv_on_dense} ensures that for any \(\varepsilon\in(0,2]\) it holds that
\[
\delta_{\mathbf E(x)}(\varepsilon)=\inf\bigg\{1-\bigg\|\frac{\bar v(x)+\bar w(x)}{2}\bigg\|_{\mathbf E(x)}\;\bigg|\;
v,w\in\mathcal C,\,\bar v(x),\bar w(x)\neq 0,\,\|\bar v(x)-\bar w(x)\|_{\mathbf E(x)}>\varepsilon\bigg\}.
\]
By combining the previous identity with \eqref{eq:equiv_to_pw_aux1}, we conclude that \eqref{eq:equiv_to_pw_2} is verified.
In particular, it holds \({\rm ess\,inf}\,\delta_{\mathbf E(\cdot)}(\varepsilon)={\rm ess\,inf}\,\delta_{\Gamma_p(\mathbf E)}^{\rm pw}(\varepsilon)\).
By taking into account the properties of \(\phi_p\), we get
\[
{\rm ess\,inf}\,\phi_p\big(\varepsilon,\delta_{\mathbf E(\cdot)}(\varepsilon/5)\big)=
\phi_p\big(\varepsilon,{\rm ess\,inf}\,\delta_{\Gamma_p(\mathbf E)}^{\rm pw}(\varepsilon/5)\big),
\]
which is sufficient to conclude the proof of the statement thanks to Theorem \ref{thm:ptwse_unif_conv_equiv}.
\end{proof}
\subsection{Characterisation of the dual of a section space}\label{s:dual_sections}
Let \((\X,\Sigma,\mm)\) be a \(\sigma\)-finite measure space. Let \(\B\) be a Banach space and \(\mathbf E\) a Banach \(\B\)-bundle over \(\X\).
Then we define \(\bar\Gamma_0(\mathbf E'_{w^*})\) as the space of all maps \(\bar\omega\colon\X\to\bigsqcup_{x\in\X}\mathbf E(x)'\) such that
\(\bar\omega(x)\in\mathbf E(x)'\) for every \(x\in\X\) and
\[
\X\ni x\longmapsto\langle\bar\omega(x),\bar v(x)\rangle\in\R\;\;\;\text{is measurable},\quad\text{ for every }\bar v\in\bar\Gamma_0(\mathbf E).
\]
We introduce an equivalence relation on the space \(\bar\Gamma_0(\mathbf E'_{w^*})\): given any \(\bar\omega,\bar\eta\in\bar\Gamma_0(\mathbf E'_{w^*})\),
we declare that \(\bar\omega\sim\bar\eta\) if for any \(\bar v\in\bar\Gamma_0(\mathbf E)\) it holds that \(\langle\bar\omega(x)-\bar\eta(x),\bar v(x)\rangle=0\)
for \(\mm\)-a.e.\ \(x\in\X\).
\begin{remark}{\rm
In the case where the ambient Banach space \(\B\) is separable, it holds that
\[
\bar\omega\sim\bar\eta\quad\Longleftrightarrow\quad\bar\omega(x)=\bar\eta(x),\;\;\text{for }\mm\text{-a.e.\ }x\in\X.
\]
On the other hand, on arbitrary Banach spaces this needs not necessarily be the case.
\fr}\end{remark}
We denote the associated quotient space by
\[
\Gamma_0(\mathbf E'_{w^*})\coloneqq\bar\Gamma_0(\mathbf E'_{w^*})/\sim,
\]
while \(\pi_\mm\colon\bar\Gamma_0(\mathbf E'_{w^*})\to\Gamma_0(\mathbf E'_{w^*})\) stands for the projection map. Then the space
\(\Gamma_0(\mathbf E'_{w^*})\) is an \(L^0(\mm)\)-normed \(L^0(\mm)\)-module if endowed with the following \(L^0(\mm)\)-pointwise norm operator:
\[
|\omega|\coloneqq\bigvee\big\{\langle\bar\omega(\cdot),\bar v(\cdot)\rangle\;\big|\;\bar v\in\bar\Gamma_0(\mathbf E),\,|\bar v|\leq 1\big\},
\quad\text{ for every }\omega=\pi_\mm(\bar\omega)\in\Gamma_0(\mathbf E'_{w^*})
\]
\begin{remark}\label{rmk:fact_sep_B}{\rm
When \(\B\) is separable, it holds \(|\omega|(x)=\|\bar\omega(x)\|_{\mathbf E(x)'}\) for \(\mm\)-a.e.\ \(x\in\X\).
\fr}\end{remark}
In particular, for any given exponent \(q\in(1,\infty)\) we can consider the space
\[
\Gamma_q(\mathbf E'_{w^*})\coloneqq\big\{\omega\in\Gamma_0(\mathbf E'_{w^*})\;\big|\;|\omega|\in L^q(\mm)\big\},
\]
which inherits an \(L^q(\mm)\)-normed \(L^\infty(\mm)\)-module structure.
\medskip

Our interest towards the space \(\Gamma_q(\mathbf E'_{w^*})\) is motivated by the following result, which states that
the module dual of the section space \(\Gamma_p(\mathbf E)\) can be identified with \(\Gamma_q(\mathbf E'_{w^*})\) itself.
\begin{theorem}[Dual of a section space]\label{thm:char_dual}
Let \((\X,\Sigma,\mm)\) be a \(\sigma\)-finite measure space. Fix any exponent \(p\in(1,\infty)\).
Let \(\B\) be a Banach space and \(\mathbf E\) a Banach \(\B\)-bundle over \(\X\). Then
\[
\Gamma_q(\mathbf E'_{w^*})\cong\Gamma_p(\mathbf E)^*.
\]
An isomorphism \({\rm I}\colon\Gamma_q(\mathbf E'_{w^*})\to\Gamma_p(\mathbf E)^*\) of \(L^q(\mm)\)-normed \(L^\infty(\mm)\)-modules is given by the map
\begin{equation}\label{eq:def_I}
\langle{\rm I}(\omega),v\rangle\coloneqq\pi_\mm\big(\langle\bar\omega(\cdot),\bar v(\cdot)\rangle\big),\quad
\text{ for all }\omega=\pi_\mm(\bar\omega)\in\Gamma_q(\mathbf E'_{w^*})\text{ and }v=\pi_\mm(\bar v)\in\Gamma_p(\mathbf E).
\end{equation}
In particular, the space \(\Gamma_q(\mathbf E'_{w^*})\) is an \(L^q(\mm)\)-Banach \(L^\infty(\mm)\)-module.
\end{theorem}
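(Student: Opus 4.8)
The plan is to verify that the explicitly given map \({\rm I}\) is a well-defined homomorphism preserving the pointwise norm (which forces injectivity), and then to prove surjectivity by reconstructing a weakly\(^*\) measurable dual section from an abstract module functional. The basic tool throughout is the pointwise Hölder-type bound: for \(\omega=\pi_\mm(\bar\omega)\in\Gamma_q(\mathbf E'_{w^*})\) and \(v=\pi_\mm(\bar v)\in\Gamma_p(\mathbf E)\), applying the defining join of \(|\omega|\) to the normalised section \(\1_{\{|\bar v|>0\}}|\bar v|^{-1}\cdot\bar v\) (which has pointwise norm \(\leq 1\)) yields \(\1_{\{|\bar v|>0\}}\langle\bar\omega(\cdot),\bar v(\cdot)\rangle\leq|\omega||v|\) \(\mm\)-a.e., and hence
\[
\big|\langle\bar\omega(\cdot),\bar v(\cdot)\rangle\big|\leq|\omega|\,|v|\quad\mm\text{-a.e.}
\]
Since \(|\omega|\in L^q(\mm)\) and \(|v|\in L^p(\mm)\), the right-hand side lies in \(L^1(\mm)\), so \(\langle{\rm I}(\omega),v\rangle\) is well-posed (independence of the representatives \(\bar\omega,\bar v\) follows from the definition of \(\sim\) and linearity of \(\bar\omega(x)\)); \(L^\infty(\mm)\)-linearity in \(v\) and in \(\omega\) is immediate, so \({\rm I}(\omega)\in\Gamma_p(\mathbf E)^*\) and \({\rm I}\) is a homomorphism.

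Next I would establish the norm identity \(|{\rm I}(\omega)|=|\omega|\) \(\mm\)-a.e., which simultaneously gives that \({\rm I}\) is isometric and injective. The inequality \(|{\rm I}(\omega)|\leq|\omega|\) is exactly the displayed bound. For the reverse, I use \(\sigma\)-finiteness: write \(\X=\bigcup_n E_n\) with \(\mm(E_n)<+\infty\), \(E_n\) increasing. Any \(\bar v\in\bar\Gamma_0(\mathbf E)\) with \(|\bar v|\leq 1\) gives \(\1_{E_n}\cdot\bar v\in\bar\Gamma_p(\mathbf E)\) with \(|\1_{E_n}\cdot\bar v|\leq 1\), and \(\langle{\rm I}(\omega),\pi_\mm(\1_{E_n}\bar v)\rangle=\pi_\mm\big(\1_{E_n}\langle\bar\omega(\cdot),\bar v(\cdot)\rangle\big)\). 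Taking the join of these over \(n\) and over all such \(\bar v\) recovers the join defining \(|\omega|\), whence \(|{\rm I}(\omega)|\geq|\omega|\). Thus \({\rm I}\) is an isometric embedding of \(L^q(\mm)\)-normed \(L^\infty(\mm)\)-modules.

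The core of the argument is surjectivity. Given \(T\in\Gamma_p(\mathbf E)^*\), with pointwise norm \(|T|\in L^q(\mm)\) satisfying \(|T(v)|\leq|T|\,|v|\) \(\mm\)-a.e., I would reconstruct \(\bar\omega\) fibrewise via the differentiation machinery. Fix the basis \(\mathcal I\) provided by Theorem \ref{thm:Leb_diff} and Theorem \ref{thm:approx_cont}, and for each \(v\in\bar\Gamma_p(\mathbf E)\) let \(\widehat{T(v)}\) denote the precise representative of \(T(v)\in L^1(\mm)\). The idea is to set \(\langle\bar\omega(x),\hat v(x)\rangle\coloneqq\widehat{T(v)}(x)\). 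The crucial estimate is \(|\widehat{T(v)}(x)|\leq|T|(x)\,|\hat v(x)|\) at \(\mm\)-a.e.\ \(x\): choosing an approximately continuous representative \(\bar v\) so that \(|\bar v(y)|\to|\hat v(x)|\) along \(\mathcal I_x\), and splitting \(\fint_I|T|\,|\bar v|\,\d\mm=\fint_I|T|\big(|\bar v|-|\hat v(x)|\big)\,\d\mm+|\hat v(x)|\fint_I|T|\,\d\mm\), the first term is controlled by approximate continuity and the second converges to \(|\hat v(x)|\,|T|(x)\) at Lebesgue points of \(|T|\in L^1_{\rm loc}\). This bound forces, by linearity of \(T\) and of the precise representative at common Lebesgue points, both well-definedness (if \(\hat v(x)=\hat w(x)\) then \(\widehat{T(v)}(x)=\widehat{T(w)}(x)\)) and \(\R\)-linearity of \(\bar\omega(x)\), together with \(\|\bar\omega(x)\|_{\mathbf E(x)'}\leq|T|(x)\). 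I then extend \(\bar\omega(x)\) by continuity to the closed span \(V_x\) of the section values and by Hahn--Banach to a norm-preserving functional on \(\mathbf E(x)\), setting \(\bar\omega(x)\coloneqq 0\) on the exceptional null set. Weak\(^*\) measurability holds since \(x\mapsto\langle\bar\omega(x),\bar v(x)\rangle\) coincides \(\mm\)-a.e.\ with the measurable \(\widehat{T(v)}\) for \(v\in\bar\Gamma_p(\mathbf E)\), and with a \(\sigma\)-finite localisation argument for general \(\bar v\in\bar\Gamma_0(\mathbf E)\); finally \(\langle{\rm I}(\omega),v\rangle=\pi_\mm\big(\langle\bar\omega(\cdot),\hat v(\cdot)\rangle\big)=\pi_\mm\big(\widehat{T(v)}\big)=T(v)\), so \({\rm I}(\omega)=T\), and \(|\omega|\leq\|\bar\omega(\cdot)\|_{\mathbf E(\cdot)'}\leq|T|\in L^q(\mm)\) confirms \(\omega\in\Gamma_q(\mathbf E'_{w^*})\).

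I expect the main obstacle to be the surjectivity step, specifically controlling a single \(\mm\)-negligible set on which the assignment \(\hat v(x)\mapsto\widehat{T(v)}(x)\) is simultaneously well-defined, linear, and bounded \emph{for all} sections \(v\). For separable \(\B\) this is handled cleanly: a countable \(\mathbb Q\)-linear spanning family \(\mathcal C\) as in Proposition \ref{prop:spanning_sects} pins down one good null set, the values \(\{\hat v(x):v\in\mathcal C\}\) are dense in \(\mathbf E(x)\), and Remark \ref{rmk:fact_sep_B} then even gives \(|\omega|(x)=\|\bar\omega(x)\|_{\mathbf E(x)'}\); the general case is reduced by embedding \(\Gamma_p(\mathbf E)\) as a closed submodule of \(\Gamma_p(\B)=L^p(\mm;\B)\), extending \(T\) via the module Hahn--Banach theorem and restricting the resulting dual section to the subbundle. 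Once surjectivity is secured, \({\rm I}\) is a bijective, pointwise-norm-preserving homomorphism, hence an isomorphism; and since module duals are always complete, \(\Gamma_q(\mathbf E'_{w^*})\cong\Gamma_p(\mathbf E)^*\) is an \(L^q(\mm)\)-Banach \(L^\infty(\mm)\)-module.
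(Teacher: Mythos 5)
Your first three paragraphs are essentially the paper's own proof: the same H\"{o}lder-type bound gives well-posedness of \({\rm I}\) and \(|{\rm I}(\omega)|\leq|\omega|\) (your direct truncation argument for the reverse inequality is a valid variant; the paper instead recovers norm preservation as a byproduct of surjectivity, via \(|\omega|\leq|T|\) combined with \(|{\rm I}(\omega)|\leq|\omega|\)), and the surjectivity construction via the differentiation basis of Theorems \ref{thm:Leb_diff} and \ref{thm:approx_cont} -- precise representatives, the estimate \(\big|\widehat{T(v)}(x)\big|\leq\overline{|T|}(x)\,\|\hat v(x)\|_{\mathbf E(x)}\) proved through approximate continuity of a representative of \(|T|\), well-posedness and linearity of \(\hat v(x)\mapsto\widehat{T(v)}(x)\), Hahn--Banach extension fibre by fibre, and the partition argument for weak\(^*\) measurability -- is exactly the paper's. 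The genuine gap is in your final paragraph: the theorem is stated for an \emph{arbitrary} Banach space \(\B\), and your handling of the non-separable case does not work. The proposed reduction -- embed \(\Gamma_p(\mathbf E)\) into \(L^p(\mm;\B)\), extend \(T\) by a module Hahn--Banach theorem, then ``restrict the resulting dual section'' -- is circular: to produce a dual \emph{section} representing the extended functional you need precisely the present theorem for the constant bundle \(\B\), which is again a bundle with non-separable ambient space, so the separable-case argument (the countable spanning family of Proposition \ref{prop:spanning_sects}, which simply does not exist here) cannot be applied to it. The classical identification \(L^p(\mm;\B)'\cong L^q_{w^*}(\mm;\B')\) for non-separable \(\B\) is a result of the same depth, classically obtained via lifting theory, and is neither elementary nor available in the paper.

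Moreover, the obstacle that pushed you into this detour -- the need for ``a single \(\mm\)-negligible set on which the assignment is simultaneously well-defined, linear, and bounded for all sections \(v\)'' -- is illusory, and recognising this is the key point of the paper's proof. One lets the domain of \(\varphi_x\) \emph{adapt to the point}: for each \(x\) outside the single null set \(N\) where the approximately continuous representative \(\overline{|T|}\) misbehaves, one sets \(\mathcal V_x\coloneqq\{\hat v(x)\,:\,v\in\tilde{\mathcal V}_x\}\), where \(\tilde{\mathcal V}_x\) consists of \emph{all} sections \(v\in\Gamma_p(\mathbf E)\) such that \(x\) is a point of approximate continuity of some representative of \(v\) and a Lebesgue point of \(T(v)\). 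The well-definedness, linearity and bound \(\|\varphi_x\|\leq\overline{|T|}(x)\) are then checked at each such \(x\) using only sections that are good at that \(x\), so the exceptional sets are allowed to depend on the section; and this suffices, because the two remaining verifications -- measurability of \(\langle\bar\omega(\cdot),\bar v(\cdot)\rangle\) (via your own partition argument) and \({\rm I}(\omega)=T\) -- are per-section, \(\mm\)-a.e.\ statements. Read this way, your paragraph-three construction already proves the theorem for arbitrary \(\B\), with no case distinction, no countable spanning family, and no reduction to Lebesgue--Bochner duality; only the one null set coming from \(\overline{|T|}\) needs to be uniform.
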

\begin{proof}
Without loss of generality, we can suppose that \((\X,\Sigma,\mm)\) is a complete measure space. Let \(\mathcal I\) be the
differentiation basis on \((\X,\Sigma,\mm)\) given by Theorem \ref{thm:Leb_diff}. The validity of the  \(\mm\)-a.e.\ inequality
\(\big|\pi_\mm\big(\langle\bar\omega(\cdot),\bar v(\cdot)\rangle\big)\big|\leq|\omega||v|\) implies that \(\rm I\) is a well-defined homomorphism
of \(L^q(\mm)\)-normed \(L^\infty(\mm)\)-modules satisfying \(|{\rm I}(\omega)|\leq|\omega|\) \(\mm\)-a.e.\ for every \(\omega\in\Gamma_q(\mathbf E'_{w^*})\).
In order to conclude, it only remains to prove that the map \(\rm I\) is surjective and satisfies \(|{\rm I}(\omega)|\geq|\omega|\) \(\mm\)-a.e.\ for every
\(\omega\in\Gamma_q(\mathbf E'_{w^*})\). To this aim, let \(T\in\Gamma_p(\mathbf E)^*\) be fixed. Applying Theorem \ref{thm:approx_cont} to
\(|T|\in L^q(\mm)\), we obtain a representative \(\overline{|T|}\in\mathcal L^q(\Sigma)\) of \(|T|\) and an \(\mm\)-null set \(N\in\Sigma\)
such that \(\overline{|T|}\) is approximately continuous at each point of \(\X\setminus N\).
For any \(x\in\X\setminus N\), we define the linear space \(\mathcal V_x\subseteq\mathbf E(x)\) as
\(\mathcal V_x\coloneqq\{\hat v(x)\,:\,v\in\tilde{\mathcal V}_x\}\), where we set
\[
\tilde{\mathcal V}_x\coloneqq\Big\{v\in\Gamma_p(\mathbf E)\;\Big|\;x\text{ is of approximate continuity for some }\bar v\in\pi_\mm^{-1}(v)
\text{ and }x\in{\rm Leb}\big(T(v)\big)\Big\}.
\]
Moreover, we define the function \(\varphi_x\colon\mathcal V_x\to\R\) as
\begin{equation}\label{eq:def_varphi_x}
\varphi_x\big(\hat v(x)\big)\coloneqq\widehat{T(v)}(x),\quad\text{ for every }v\in\tilde{\mathcal V}_x.
\end{equation}
Let us check that \(\varphi_x\) is well-defined: if \(v,w\in\tilde{\mathcal V}_x\) satisfy \(\hat v(x)=\hat w(x)\), then Theorem \ref{thm:Leb_diff} yields
\[\begin{split}
\big|\widehat{T(v)}(x)-\widehat{T(w)}(x)\big|&=\bigg|\lim_{I\Rightarrow x}\fint_I\big(T(v)-T(w)\big)\,\d\mm\bigg|
\leq\limi_{I\Rightarrow x}\fint_I\big|T(v)-T(w)\big|\,\d\mm\\
&\leq\limi_{I\Rightarrow x}\fint_I|T||v-w|\,\d\mm\overset\star\leq\overline{|T|}(x)\limi_{I\Rightarrow x}\fint_I|v-w|\,\d\mm\\
&\leq\overline{|T|}(x)\lim_{I\Rightarrow x}\fint_I\big\|v(\cdot)-\hat v(x)\big\|_\B\,\d\mm
+\overline{|T|}(x)\lim_{I\Rightarrow x}\fint_I\big\|w(\cdot)-\hat w(x)\big\|_\B\,\d\mm\\
&=0,
\end{split}\]
whence it follows that the definition in \eqref{eq:def_varphi_x} is well-posed. Let us justify the starred inequality:
fix any \(\sigma\in\R\) for which there exists \(I\in\mathcal I_x\) such that \(\fint_J|T||v-w|\,\d\mm\geq\sigma\) for every \(J\in\mathcal I_x\)
with \(J\subseteq I\). By Theorem \ref{thm:approx_cont}, we can also assume that \(|v-w|\leq M\) holds \(\mm\)-a.e.\ on \(I\) for some constant \(M>0\).
Indeed, chosen two representatives \(\bar v\in\pi_\mm^{-1}(v)\) and \(\bar w\in\pi_\mm^{-1}(w)\) so that \(x\) is a point of approximate continuity both
for \(\bar v\) and for \(\bar w\), we have that there exist sets \(I_v,I_w\in\mathcal I_x\) satisfying \(\big\|\bar v(y)-\bar v(x)\big\|_\B\leq 1\)
for every \(y\in I_v\) and \(\big\|\bar w(y)-\bar w(x)\big\|_\B\leq 1\) for every \(y\in I_w\). Hence, picking any \(I\in\mathcal I_x\) with
\(I\subseteq I_v\cap I_w\), we have that\[
\big\|\bar v(y)-\bar w(y)\big\|_\B\leq\big\|\hat v(x)-\hat w(x)\big\|_\B+2\eqqcolon M,\quad\text{ for every }y\in I,
\]
which yields \(|v-w|\leq M\) \(\mm\)-a.e.\ on \(I\).
Given any \(\varepsilon>0\), we can find \(I_\varepsilon\in\mathcal I_x\) with \(I_\varepsilon\subseteq I\) such that the inequality
\(\overline{|T|}(y)\leq\overline{|T|}(x)+\varepsilon\) holds for all \(y\in I_\varepsilon\). In particular, for any \(J\in\mathcal I_x\)
such that \(J\subseteq I_\varepsilon\) we can estimate \(\overline{|T|}(x)\fint_J|v-w|\,\d\mm\geq\sigma-M\varepsilon\). Given that \(\varepsilon>0\)
was arbitrary, we deduce that \(\overline{|T|}(x)\limi_{I\Rightarrow x}\fint_I|v-w|\,\d\mm\geq\sigma\), whence the starred inequality above follows.

The linearity of \(\varphi_x\) can be easily proved. Moreover, for any \(v\in\tilde{\mathcal V}_x\) we can estimate
\[
\big|\varphi_x\big(\hat v(x)\big)\big|=\big|\widehat{T(v)}(x)\big|\leq\limi_{I\Rightarrow x}\fint_I|T||v|\,\d\mm
\overset\star\leq\overline{|T|}(x)\lim_{I\Rightarrow x}\fint_I|v|\,\d\mm=\overline{|T|}(x)\|\hat v(x)\|_{\mathbf E(x)},
\]
where the starred inequality can be justified exactly as before. This grants that the map \(\varphi_x\) is continuous and that is operator norm
does not exceed \(\overline{|T|}(x)\). Thanks to Hahn--Banach Theorem, we can find \(\bar\omega(x)\in\mathbf E(x)'\) such that
\(\bar\omega(x)|_{\mathcal V_x}=\varphi_x\) and \(\|\bar\omega(x)\|_{\mathbf E(x)'}\leq\overline{|T|}(x)\). Finally, for any point \(x\in N\)
we define \(\bar\omega(x)\coloneqq 0_{\mathbf E(x)'}\). We claim that the resulting map \(\bar\omega\) belongs to \(\bar\Gamma_0(\mathbf E'_{w^*})\).
In order to verify it, fix any \(\bar v\in\bar\Gamma_0(\mathbf E)\). Pick some partition \((A_n)_{n\in\N}\subseteq\Sigma\) of \(\X\) satisfying
\(v_n\coloneqq\1_{A_n}\cdot\bar v\in\bar\Gamma_p(\mathbf E)\) for every \(n\in\N\). Observe that \(\bar v_n(x)\in\mathcal V_x\)
for \(\mm\)-a.e.\ \(x\in\X\) by Theorem \ref{thm:Leb_diff}. Hence, calling \(v_n\coloneqq\pi_\mm(\bar v_n)\in\Gamma_p(\mathbf E)\)
for brevity, for \(\mm\)-a.e.\ \(x\in\X\) it holds that
\[
\langle\bar\omega(x),\bar v(x)\rangle=\sum_{n\in\N}\1_{A_n}(x)\,\langle\bar\omega(x),\bar v_n(x)\rangle=
\sum_{n\in\N}\1_{A_n}(x)\,\varphi_x\big(\hat v_n(x)\big)=\sum_{n\in\N}\1_{A_n}(x)\,\widehat{T(v_n)}(x).
\]
This implies that \(\X\ni x\mapsto\langle\bar\omega(x),\bar v(x)\rangle\in\R\) is measurable, thus accordingly \(\bar\omega\in\bar\Gamma_0(\mathbf E'_{w^*})\).

In order to conclude, it only remains to show that \({\rm I}(\omega)=T\) and \(|\omega|\leq|T|\) in the \(\mm\)-a.e.\ sense, where
\(\omega\in\Gamma_0(\mathbf E'_{w^*})\) stands for the equivalence class of \(\bar\omega\). Fix any \(v=\pi_\mm(\bar v)\in\Gamma_p(\mathbf E)\).
Then \(\hat v(x)\in\mathcal V_x\) for \(\mm\)-a.e.\ \(x\in\X\) by Theorem \ref{thm:Leb_diff}, so accordingly we have that
\[
\langle{\rm I}(\omega),v\rangle(x)=\langle\bar\omega(x),\bar v(x)\rangle=\varphi_x\big(\hat v(x)\big)=\widehat{T(v)}(x)=T(v)(x),
\quad\text{ for }\mm\text{-a.e.\ }x\in\X.
\]
This shows that \(\langle{\rm I}(\omega),v\rangle=T(v)\) for all \(v\in\Gamma_p(\mathbf E)\) and thus \({\rm I}(\omega)=T\).
Finally, recalling that \(\|\bar\omega(x)\|_{\mathbf E(x)'}\leq\overline{|T|}(x)\) for \(\mm\)-a.e.\ \(x\in\X\), we deduce
that for any \(\bar v\in\bar\Gamma_0(\mathbf E)\) with \(|\bar v|\leq 1\) it holds
\[
\langle\bar\omega(x),\bar v(x)\rangle\leq\|\bar\omega(x)\|_{\mathbf E(x)'}\|\bar v(x)\|_{\mathbf E(x)}\leq\overline{|T|}(x),
\quad\text{ for }\mm\text{-a.e.\ }x\in\X,
\]
so that \(|\omega|\leq|T|\) holds in the \(\mm\)-a.e.\ sense. Therefore, the statement is achieved.
\end{proof}
\subsection{Reflexive bundles/modules}\label{s:reflex}
In this section, we will prove that the section space of a separable Banach bundle is reflexive if and only if (almost all)
its fibers are reflexive. Before stating the main theorem, we need to discuss a few auxiliary results.
\begin{remark}\label{rmk:sep_predual}{\rm
Let us recall a standard fact in Banach space theory. Let \(\B\) be a Banach space whose dual \(\B'\) is separable. Let \((v_n)_{n\in\N}\subseteq\B\) and
\((\omega_n)_{n\in\N}\subseteq\B'\) be given sequences satisfying \(\|v_n\|_\B=\|\omega_n\|_{\B'}=\langle\omega_n,v_n\rangle=1\) for every \(n\in\N\).
Suppose \((\omega_n)_{n\in\N}\) is dense in the unit sphere \(\mathbb S_{\B'}\). Then the \(\mathbb Q\)-linear subspace of \(\B\) generated by
\((v_n)_{n\in\N}\) is dense in \(\B\).
\fr}\end{remark}
\begin{proposition}\label{prop:partial_reflex_impl}
Let \((\X,\Sigma,\mm)\) be a \(\sigma\)-finite measure space and \(p\in(1,\infty)\) a given exponent. Let \(\B\) be a separable Banach space and let
\(\mathbf E\) be a reflexive Banach \(\B\)-bundle over \(\X\). Consider the mapping \(\theta\colon\Gamma_p(\mathbf E)\to\Gamma_q(\mathbf E'_{w^*})^*\),
which is given by
\begin{equation}\label{eq:def_theta}
\langle\theta(v),\omega\rangle\coloneqq\pi_\mm\big(\langle\bar\omega(\cdot),\bar v(\cdot)\rangle\big),
\quad\text{ for all }v=\pi_\mm(\bar v)\in\Gamma_p(\mathbf E)\text{ and }\omega=\pi_\mm(\bar\omega)\in\Gamma_q(\mathbf E'_{w^*}).
\end{equation}
Then the operator \(\theta\) is an isomorphism of \(L^p(\mm)\)-Banach \(L^\infty(\mm)\)-modules.
\end{proposition}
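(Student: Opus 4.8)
The plan is to show that $\theta$ is a surjective, pointwise-norm-preserving homomorphism of $L^p(\mm)$-Banach $L^\infty(\mm)$-modules. By Theorem~\ref{thm:char_dual}, we already know that the map $\mathrm I\colon\Gamma_q(\mathbf E'_{w^*})\to\Gamma_p(\mathbf E)^*$ is an isomorphism. I would combine this with the reflexivity of the fibers by observing that $\theta$ is essentially $\mathrm I^{\mathrm{ad}}$ restricted (via the James embedding) from $\Gamma_p(\mathbf E)$ into its bidual, so that the whole problem reduces to an \emph{application of Theorem~\ref{thm:char_dual} to the dual bundle} $\mathbf E'_{w^*}$ rather than to $\mathbf E$ itself. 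The key structural point is that the reflexivity of each fiber $\mathbf E(x)$ lets us identify the bidual fibers $\mathbf E(x)''$ canonically with $\mathbf E(x)$, so that a weakly$^*$ measurable section of $(\mathbf E'_{w^*})'_{w^*}$ corresponds to an honest section of $\mathbf E$.

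\textbf{First steps.}
First I would verify, as in the opening of the proof of Theorem~\ref{thm:char_dual}, that $\theta$ is a well-defined homomorphism with $|\theta(v)|\le|v|$ in the $\mm$-a.e.\ sense; this is immediate from the pointwise bound $\big|\langle\bar\omega(x),\bar v(x)\rangle\big|\le\|\bar\omega(x)\|_{\mathbf E(x)'}\|\bar v(x)\|_{\mathbf E(x)}$ together with Remark~\ref{rmk:fact_sep_B}. The reverse inequality $|\theta(v)|\ge|v|$ $\mm$-a.e.\ is where reflexivity of the fibers enters: for $\mm$-a.e.\ $x$ and any approximating sections, the norm $\|\bar v(x)\|_{\mathbf E(x)}=\|\bar v(x)\|_{\mathbf E(x)''}$ can be realised (up to $\varepsilon$) by testing against a functional in the unit ball of $\mathbf E(x)'$, and such functionals can be organised into a measurable section $\bar\omega\in\bar\Gamma_0(\mathbf E'_{w^*})$ of norm $\le 1$ by a selection/spanning-section argument analogous to Proposition~\ref{prop:spanning_sects} applied to the dual bundle. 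This would yield $|\theta(v)|=|v|$ $\mm$-a.e., so that $\theta$ preserves the pointwise norm and is in particular injective.

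\textbf{Surjectivity.}
The heart of the argument is surjectivity of $\theta$, and I expect this to be the main obstacle. Given $T\in\Gamma_q(\mathbf E'_{w^*})^*$, one wants to produce a section $v\in\Gamma_p(\mathbf E)$ with $\theta(v)=T$. The natural route is to mimic the construction in Theorem~\ref{thm:char_dual}: apply the Lebesgue differentiation machinery (Theorems~\ref{thm:Leb_diff} and~\ref{thm:approx_cont}) to build, for $\mm$-a.e.\ $x$, a bounded linear functional $\psi_x$ on a dense subspace $\mathcal W_x\subseteq\mathbf E(x)'$ of operator norm $\le\overline{|T|}(x)$, and then to \emph{invoke the reflexivity of $\mathbf E(x)$} to identify $\psi_x\in\mathbf E(x)''$ with an element $\bar v(x)\in\mathbf E(x)$, instead of merely applying Hahn--Banach to obtain an element of $\mathbf E(x)''$. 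The delicate point is to check that the resulting $\bar v$ is strongly measurable as a section of $\mathbf E$ (not just weakly$^*$ measurable into a bidual), which should follow from the separability of $\B$ together with the measurability of $x\mapsto\langle\bar\omega(x),\bar v(x)\rangle$ obtained by testing against a countable spanning family $\mathcal C\subseteq\bar\Gamma_q(\mathbf E'_{w^*})$ as in Proposition~\ref{prop:spanning_sects}. A cleaner alternative, which I would attempt first, is to bypass this construction entirely: since $\theta=\mathrm I^{\mathrm{ad}}\circ J_{\Gamma_p(\mathbf E)}$ where $\mathrm I$ is the isomorphism of Theorem~\ref{thm:char_dual} and $J$ is James' embedding, and since $\mathrm I^{\mathrm{ad}}$ is an isomorphism, surjectivity of $\theta$ would be equivalent to surjectivity of $J_{\Gamma_p(\mathbf E)}$—but that is exactly the reflexivity we are trying to prove (it is deduced as Theorem~\ref{thm:reflex_impl}), so this route is circular and one must instead carry out the fiberwise construction above, using reflexivity of $\mathbf E(x)$ as the genuinely new ingredient.

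\textbf{Conclusion.}
Once $\theta$ is shown to be a pointwise-norm-preserving homomorphism that is surjective, it is automatically an isomorphism of $L^p(\mm)$-Banach $L^\infty(\mm)$-modules, since pointwise-norm preservation gives injectivity and a bounded inverse. The only subtlety to double-check is that the $L^\infty(\mm)$-linearity and continuity of $\theta$, already recorded in the first step, are inherited by $\theta^{-1}$; this is standard once norm preservation is in hand.
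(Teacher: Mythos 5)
Your skeleton agrees with the paper's in outline: the first step (well-definedness and \(|\theta(v)|\le|v|\)) is identical, you rightly reject the route \(\theta={\rm J}^{\rm ad}\circ J_{\Gamma_p(\mathbf E)}\) as circular, and you correctly identify fiber reflexivity as the ingredient that turns an abstract bidual element into an honest vector of \(\mathbf E(x)\). However, there is a genuine gap at the core of your surjectivity argument: you never construct a countable family of weakly\(^*\) measurable dual sections whose values are dense in \(\mathbf E(x)'\) for \(\mm\)-a.e.\ \(x\), and the tools you invoke for this do not apply. Proposition \ref{prop:spanning_sects} and Theorems \ref{thm:Leb_diff}, \ref{thm:approx_cont} are statements about Banach \(\B\)-bundles, i.e.\ weakly measurable multivalued maps into a fixed ambient space; the dual object \(\mathbf E'_{w^*}\) is not such a bundle (its sections are only weakly\(^*\) measurable, the fibers \(\mathbf E(x)'\) do not sit measurably inside a common ambient space, and no Bochner integrals -- hence no Lebesgue points \(\hat\omega(x)\) -- of dual sections are available). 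So there is no ``spanning-section argument analogous to Proposition \ref{prop:spanning_sects} applied to the dual bundle'', and the Lebesgue differentiation machinery cannot be run on elements of \(\Gamma_q(\mathbf E'_{w^*})\). Without a fiberwise dense countable family, your functional \(\psi_x\) lives on a subspace \(\mathcal W_x\) that you cannot show is dense: Hahn--Banach plus reflexivity would still give some \(\bar v(x)\in\mathbf E(x)\) representing an extension, but the extension is non-unique (so measurability of \(x\mapsto\bar v(x)\) is in jeopardy), and -- decisively -- the final verification \(\theta(v)=L\) requires approximating \(\bar\eta(x)\), for an arbitrary \(\eta\in\Gamma_q(\mathbf E'_{w^*})\), in the norm of \(\mathbf E(x)'\) by members of the countable family, which is exactly the density you are missing.

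The mechanism the paper uses to fill this gap is worth recording, since it is the real content of the proof. One takes spanning sections \(v_n\) of \(\mathbf E\) normalised so that \(|v_n|\in\{0,1\}\), applies the module Hahn--Banach theorem \cite[Corollary 1.2.16]{Gigli14} to obtain \(\tilde\omega_n\in\Gamma_p(\mathbf E)^*\) with \(|\tilde\omega_n|=|v_n|=\langle\tilde\omega_n,v_n\rangle\) \(\mm\)-a.e., and transfers these to dual sections \(w_n\coloneqq{\rm I}^{-1}(\tilde\omega_n)\in\Gamma_q(\mathbf E'_{w^*})\) via Theorem \ref{thm:char_dual}; the fiberwise density of the \(\mathbb Q\)-span of \(\{\bar w_n(x)\}\) in \(\mathbf E(x)'\) then follows from Remark \ref{rmk:sep_predual}, applied with the roles \(\B\coloneqq\mathbf E(x)'\) and \(\B'=\mathbf E(x)''\cong\mathbf E(x)\): reflexivity provides the identification, and the separability of \(\mathbf E(x)\) (a subspace of the separable \(\B\)) is what the remark needs. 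Once this density is available, no differentiation basis is needed at all: a single \(\mm\)-null set outside of which countably many identities hold suffices, and both the measurability of \(\bar v\) and the identity \(\theta(v)=L\) follow by testing against countable \(\1_{A}\)-combinations of the \(w_n\). A final small correction: your ``first steps'' attribute the inequality \(|\theta(v)|\ge|v|\) to reflexivity, but it needs none -- taking a norming \(T\in\Gamma_p(\mathbf E)^*\) from \cite[Corollary 1.2.16]{Gigli14} and setting \(\omega\coloneqq{\rm I}^{-1}(T)\), one gets \(|\theta(v)|\ge\langle\theta(v),\omega\rangle=\langle T,v\rangle=|v|\) \(\mm\)-a.e.; reflexivity is needed only for surjectivity, as you yourself state later.
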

\begin{proof}
Without loss of generality, we can suppose that \((\X,\Sigma,\mm)\) is a complete measure space.\\
{\color{blue}\textsc{Step 1.}} The \(\mm\)-a.e.\ inequality \(\big|\pi_\mm(\langle\bar\omega(\cdot),\bar v(\cdot)\rangle)\big|\leq|v||\omega|\)
ensures that \(\theta\) is a well-defined homomorphism of \(L^p(\mm)\)-Banach \(L^\infty(\mm)\)-modules satisfying \(|\theta(v)|\leq|v|\)
for every \(v\in\Gamma_p(\mathbf E)\).\\
{\color{blue}\textsc{Step 2.}} It remains to prove that \(\theta\) is surjective and satisfies \(|\theta(v)|\geq|v|\) for every
\(v\in\Gamma_p(\mathbf E)\). To this aim, fix any \(L\in\Gamma_q(\mathbf E'_{w^*})^*\). Pick a sequence \((v_n)_{n\in\N}\subseteq\Gamma_p(\mathbf E)\)
with \(|v_n|(x)\in\{0,1\}\) for every \(n\in\N\) and \(\mm\)-a.e.\ \(x\in\X\), and such that
\[
\{v_n(x)\,:\,n\in\N\}\setminus\{0_{\mathbf E(x)}\}\;\;\text{is dense in }\mathbb S_{\mathbf E(x)},\quad\text{ for }\mm\text{-a.e.\ }x\in\X.
\]
Given any \(n\in\N\), thanks to \cite[Corollary 1.2.16]{Gigli14} we can find an element \(\tilde\omega_n\in\Gamma_p(\mathbf E)^*\)
such that \(|\tilde\omega_n|=|v_n|=\langle\tilde\omega_n,v_n\rangle\) holds \(\mm\)-a.e.\ on \(\X\).
Now define \(w_n\coloneqq{\rm I}^{-1}(\tilde w_n)\in\Gamma_q(\mathbf E'_{w^*})\), where
\({\rm I}\colon\Gamma_q(\mathbf E'_{w^*})\to\Gamma_p(\mathbf E)^*\) stands for the isomorphism provided by Theorem \ref{thm:char_dual}.
Let us denote by \(\mathcal V\) the \(\mathbb Q\)-linear subspace of \(\Gamma_q(\mathbf E'_{w^*})\) generated by \((w_n)_{n\in\N}\).
Notice that \(\mathcal V\) is a countable family by construction. Given \(n\in\N\) and \(w\in\mathcal V\), fix representatives
\(\bar v_n\), \(\bar\omega\), \(\overline{L(\omega)}\), and \(\overline{|L|}\) of \(v_n\), \(\omega\), \(L(\omega)\), and \(|L|\), respectively.
By Remark \ref{rmk:fact_sep_B}, there exists \(N\in\Sigma\) with \(\mm(N)=0\) such that for any \(x\in\X\setminus N\) it holds that
\begin{subequations}\begin{align}
\label{eq:choice_N_-1}\mathbf E(x)&,\quad\text{ is reflexive,}\\
\label{eq:choice_N_0}\{\bar v_m(x)\,:\,m\in\N\}\setminus\{0_{\mathbf E(x)}\}&,\quad\text{ is dense in }\mathbb S_{\mathbf E(x)},\\
\label{eq:choice_N_1}\|\bar\omega_n(x)\|_{\mathbf E(x)'}&=\|\bar v_n(x)\|_{\mathbf E(x)}=\langle\bar\omega_n(x),\bar v_n(x)\rangle,\\
\label{eq:choice_N_2}\overline{(\omega+\eta)}(x)&=\bar\omega(x)+\bar\eta(x),\\
\label{eq:choice_N_3}\overline{(\lambda\,\omega)}(x)&=\lambda\,\bar\omega(x),\\
\label{eq:choice_N_4}\overline{L(\omega+\eta)}(x)&=\overline{L(\omega)}(x)+\overline{L(\eta)}(x),\\
\label{eq:choice_N_5}\overline{L(\lambda\,\omega)}(x)&=\lambda\,\overline{L(\omega)}(x),\\
\label{eq:choice_N_6}\overline{L(\omega)}(x)&\leq\overline{|L|}(x)\|\bar\omega(x)\|_{\mathbf E(x)'},
\end{align}\end{subequations}
for every \(n\in\N\), \(\omega,\eta\in\mathcal V\), and \(\lambda\in\mathbb Q\). Given any \(x\in\X\setminus N\), let us consider the countable,
\(\mathbb Q\)-linear subspace \(\mathcal V_x\coloneqq\{\bar\omega(x)\,:\,\omega\in\mathcal V\}\) of \(\mathbf E(x)'\). The fact that \(\mathcal V_x\)
is a \(\mathbb Q\)-linear space is granted by \eqref{eq:choice_N_2} and \eqref{eq:choice_N_3}. By taking \eqref{eq:choice_N_-1}, \eqref{eq:choice_N_0},
\eqref{eq:choice_N_1}, and Remark \ref{rmk:sep_predual} into account, we deduce that \(\mathcal V_x\) is dense in \(\mathbf E(x)'\). Now we define
the function \(\varphi_x\colon\mathcal V_x\to\R\) as
\[
\varphi_x\big(\bar\omega(x)\big)\coloneqq\overline{L(\omega)}(x),\quad\text{ for every }\omega\in\mathcal V.
\]
The well-posedness of \(\varphi_x\) stems from the observation that for any \(\omega,\eta\in\mathcal V\) it holds that
\[\begin{split}
\big|\overline{L(\omega)}(x)-\overline{L(\eta)}(x)\big|&\overset{\eqref{eq:choice_N_4}}=\big|\overline{L(\omega-\eta)}(x)\big|
\overset{\eqref{eq:choice_N_6}}\leq\overline{|L|}(x)\big\|\overline{(\omega-\eta)}(x)\big\|_{\mathbf E(x)'}\\
&\overset{\eqref{eq:choice_N_2}}=\overline{|L|}(x)\big\|\bar\omega(x)-\bar\eta(x)\big\|_{\mathbf E(x)'}.
\end{split}\]
The \(\mathbb Q\)-linearity of \(\varphi_x\) is a consequence of \eqref{eq:choice_N_4} and \eqref{eq:choice_N_5}. Moreover, \eqref{eq:choice_N_6} grants
the validity of the inequality \(\big|\varphi_x\big(\bar\omega(x)\big)\big|\leq\overline{|L|}(x)\|\bar\omega(x)\|_{\mathbf E(x)'}\) for every
\(\omega\in\mathcal V\), whence the continuity of the function \(\varphi_x\) follows. Therefore, there exists a unique element
\(\bar v(x)\in\mathbf E(x)\cong\mathbf E(x)''\) such that \(\langle\bar\omega(x),\bar v(x)\rangle=\overline{L(\omega)}(x)\)
holds for every \(\omega\in\mathcal V\) and \(\|\bar v(x)\|_{\mathbf E(x)} \leq\overline{|L|}(x)\). Finally, for any point \(x\in N\)
we define \(\bar v(x)\coloneqq 0_{\mathbf E(x)}\).\\
{\color{blue}\textsc{Step 3.}} Next we claim that the resulting map \(\bar v\) belongs to \(\bar\Gamma_0(\mathbf E)\). By virtue of the separability
of \(\B\), it is sufficient to prove that \(\bar v\colon\X\to\B\) is weakly measurable. To this aim, fix any element \(\eta_0\in\B'\). Define
\(\bar\eta(x)\coloneqq\eta_0|_{\mathbf E(x)}\in\mathbf E(x)'\) for every \(x\in\X\). For any \(\omega\in\mathcal V\), one has that
\begin{equation}\label{eq:expr_bareta}
\big\|\bar\eta(x)-\bar\omega(x)\big\|_{\mathbf E(x)'}=\sup_{n\in\N}\big\langle\eta_0-\bar\omega(x),\bar v_n(x)\big\rangle,
\quad\text{ for }\mm\text{-a.e.\ }x\in\X.
\end{equation}
Since the function \(\X\ni x\mapsto\big\|\bar\eta(x)-\bar\omega(x)\big\|_{\mathbf E(x)'}\) is measurable for every \(\omega\in\mathcal V\)
thanks to \eqref{eq:expr_bareta} and the space \(\mathcal V_x\) is dense in \(\mathbf E(x)'\) for \(\mm\)-a.e.\ \(x\in\X\), we deduce that
for any \(k\in\N\) we can find a partition \(\{A^k_\omega\}_{\omega\in\mathcal V}\subseteq\Sigma\) of \(\X\) (up to \(\mm\)-null sets) such
that \(\big\|\bar\eta(x)-\bar\eta_k(x)\big\|_{\mathbf E(x)'}\leq 1/k\) for \(\mm\)-a.e.\ \(x\in\X\), where we set
\(\bar\eta_k\coloneqq\sum_{\omega\in\mathcal V}\1_{A^k_\omega}\,\bar\omega\). Therefore, for \(\mm\)-a.e.\ \(x\in\X\) we can express
\[\begin{split}
\langle\eta_0,\bar v(x)\rangle&=\langle\bar\eta(x),\bar v(x)\rangle=\lim_{k\to\infty}\langle\bar\eta_k(x),\bar v(x)\rangle=
\lim_{k\to\infty}\sum_{\omega\in\mathcal V}\1_{A^k_\omega}(x)\,\langle\bar\omega(x),\bar v(x)\rangle\\
&=\lim_{k\to\infty}\sum_{\omega\in\mathcal V}\1_{A^k_\omega}(x)\,\overline{L(\omega)}(x),
\end{split}\]
thus accordingly \(\langle\eta_0,\bar v(\cdot)\rangle\) is measurable. By arbitrariness of \(\eta_0\in\B'\), we conclude that
\(\bar v\) is weakly (thus, strongly) measurable, as desired. Let us then define \(v\coloneqq\pi_\mm(\bar v)\in\Gamma_0(\mathbf E)\).\\
{\color{blue}\textsc{Step 4.}} In order to conclude, it only remains to show that \(\theta(v)=L\) and \(|v|\leq|L|\) in the \(\mm\)-a.e.\ sense.
Fix any \(\eta\in\Gamma_q(\mathbf E'_{w^*})\), with representative \(\bar\eta\in\bar\Gamma_0(\mathbf E'_{w^*})\). By arguing as we did
in \textsc{Step 3}, we can construct a sequence \((\bar\eta_k)_{k\in\N}\subseteq\bar\Gamma_0(\mathbf E'_{w^*})\) of the form
\(\bar\eta_k=\sum_{\omega\in\mathcal V}\1_{A^k_\omega}\,\bar\omega\), such that \(\lim_k\big\|\bar\eta_k(x)-\bar\eta(x)\big\|_{\mathbf E(x)'}=0\)
for \(\mm\)-a.e.\ \(x\in\X\).
Therefore, for \(\mm\)-a.e.\ \(x\in\X\) it holds that
\[\begin{split}
\langle\theta(v),\eta\rangle(x)&=\langle\bar\eta(x),\bar v(x)\rangle=\lim_{k\to\infty}\langle\bar\eta_k(x),\bar v(x)\rangle
=\lim_{k\to\infty}\sum_{\omega\in\mathcal V}\1_{A^k_\omega}(x)\,\langle\bar\omega(x),\bar v(x)\rangle\\
&=\lim_{k\to\infty}\sum_{\omega\in\mathcal V}\1_{A^k_\omega}(x)\,\overline{L(\omega)}(x)=\lim_{k\to\infty}L\big(\pi_\mm(\bar\eta_k)\big)(x)=L(\eta)(x).
\end{split}\]
By arbitrariness of \(\eta\in\Gamma_q(\mathbf E'_{w^*})\), it follows that \(\theta(v)=L\). Finally, since \(\|\bar v(x)\|_{\mathbf E(x)}\leq\overline{|L|}(x)\)
for \(\mm\)-a.e.\ \(x\in\X\), we obtain the \(\mm\)-a.e.\ inequality \(|v|\leq|L|\). Hence, the statement is achieved.
\end{proof}
Proposition \ref{prop:partial_reflex_impl} will play a key role in proving one implication of the main result of this section, namely,
Theorem \ref{thm:reflex_impl}. In order to prove the converse implication, we need the alternative -- more `quantitative' -- characterisation
of reflexivity that we report in Lemma \ref{lem:equiv_not_reflex}. Before passing to its statement, it is convenient to introduce
some additional notation.
\medskip

We denote by \(\bigoplus_\N\mathbb Q\) the set of sequences \(\boldsymbol q=(q_i)_{i\in\N}\in\mathbb Q^\N\)
satisfying \(q_i=0\) for all but finitely many \(i\in\N\). We define
\(\Delta\coloneqq\big\{\boldsymbol q\in\bigoplus_\N\mathbb Q\cap[0,1]^\N\,:\,\sum_{i\in\N}q_i=1\big\}\).
Given any \(\boldsymbol q,\boldsymbol r\in\Delta\), we declare that \(\boldsymbol q\prec\boldsymbol r\)
if \(\max\{i\in\N\,:\,q_i\neq 0\}<\min\{j\in\N\,:\,r_j\neq 0\}\). Finally, we define
\[
\mathscr F\coloneqq\big\{(\boldsymbol q,\boldsymbol r)\in\Delta\times\Delta\;\big|\;\boldsymbol q\prec\boldsymbol r\big\}.
\]
\begin{lemma}\label{lem:equiv_not_reflex}
Let \(\B\) be a Banach space. Then the following two conditions are equivalent:
\begin{itemize}
\item[\(\rm i)\)] \(\B\) is not reflexive.
\item[\(\rm ii)\)] Given any \(\lambda\in(0,1)\), there exists a sequence \((v_i)_{i\in\N}\subset B_\B\) such that
\begin{equation}\label{eq:equiv_refl_Banach}
\bigg\|\sum_{i\in\N}q_i v_i-\sum_{i\in\N}r_i v_i\bigg\|_\B\geq\lambda,\quad\text{ for every }(\boldsymbol q,\boldsymbol r)\in\mathscr F.
\end{equation}
\end{itemize}
\end{lemma}
\begin{proof}
The Eberlein--\v{S}mulian Theorem (see, \emph{e.g.}, \cite[Theorems 3.18 and 3.19]{Brezis11}) says that
\(\B\) is reflexive if and only if every sequence in \(B_\B\) admits a weakly converging subsequence. Then:\\
{\color{blue}\({\rm i)}\Rightarrow{\rm ii)}.\)} It readily follows, \emph{e.g.}, from \cite[Theorem 3.132]{FHHMZ11}.\\
{\color{blue}\({\rm ii)}\Rightarrow{\rm i)}.\)} Let \((v_i)_{i\in\N}\subset B_\B\) satisfy \eqref{eq:equiv_refl_Banach}. We argue by contradiction:
suppose \(\B\) is reflexive. Then Mazur's Lemma (see, \emph{e.g.}, \cite[Corollary 3.8]{Brezis11}) yields an element
\(v\in B_\B\) and a sequence \((\boldsymbol q^j)_{j\in\N}\subset\Delta\) such that \(\boldsymbol q^j\prec\boldsymbol q^{j+1}\)
for every \(j\in\N\) and \(\sum_{i\in\N}q^j_i v_i\to v\) strongly in \(\B\) as \(j\to\infty\). In particular,
\(\big\|\sum_{i\in\N}q^j_i v_i-\sum_{i\in\N}q^{j+1}_i v_i\big\|_\B<\lambda\) for \(j\in\N\) big enough, contradicting ii).
\end{proof}
Combining Proposition \ref{prop:partial_reflex_impl} with Lemma \ref{lem:equiv_not_reflex}, we obtain the main result of this section:
\begin{theorem}[Reflexive bundles/modules]\label{thm:reflex_impl}
Let \((\X,\Sigma,\mm)\) be a \(\sigma\)-finite measure space, \(\B\) a separable Banach space, and \(\mathbf E\)
a Banach \(\B\)-bundle over \(\X\). Then \(\mathbf{E}\) is a reflexive bundle if and only if \(\Gamma_p(\mathbf E)\)
is a reflexive Banach space for every (or, equivalently, for some) \(p\in(1,\infty)\).
\end{theorem}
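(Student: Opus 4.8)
The plan is to show that the James embedding $J_{\Gamma_p(\mathbf E)}\colon\Gamma_p(\mathbf E)\to\Gamma_p(\mathbf E)^{**}$ is surjective, which by \cite[Corollary 1.2.18]{Gigli14} is equivalent to the reflexivity of $\Gamma_p(\mathbf E)$ as a Banach space. The key observation is that $J_{\Gamma_p(\mathbf E)}$ can be factored through the two isomorphisms already established: the identification ${\rm I}\colon\Gamma_q(\mathbf E'_{w^*})\to\Gamma_p(\mathbf E)^*$ of Theorem \ref{thm:char_dual}, and the isomorphism $\theta\colon\Gamma_p(\mathbf E)\to\Gamma_q(\mathbf E'_{w^*})^*$ of Proposition \ref{prop:partial_reflex_impl}.

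First I would pass to the adjoint ${\rm I}^{\rm ad}\colon\Gamma_p(\mathbf E)^{**}\to\Gamma_q(\mathbf E'_{w^*})^*$ of ${\rm I}$; since ${\rm I}$ is an isomorphism of modules, so is ${\rm I}^{\rm ad}$, as recalled in the preliminaries. The crux of the argument is then the factorisation identity
\[
{\rm I}^{\rm ad}\circ J_{\Gamma_p(\mathbf E)}=\theta,
\]
which I would verify by a direct computation from the defining relations. Indeed, for any $v=\pi_\mm(\bar v)\in\Gamma_p(\mathbf E)$ and any $\omega=\pi_\mm(\bar\omega)\in\Gamma_q(\mathbf E'_{w^*})$, combining the definition \eqref{eq:def_ad} of the adjoint, the definition \eqref{eq:def_James} of the James embedding, and the definition \eqref{eq:def_I} of ${\rm I}$ gives
\[
\langle{\rm I}^{\rm ad}(J_{\Gamma_p(\mathbf E)}(v)),\omega\rangle=\langle J_{\Gamma_p(\mathbf E)}(v),{\rm I}(\omega)\rangle=\langle{\rm I}(\omega),v\rangle=\pi_\mm\big(\langle\bar\omega(\cdot),\bar v(\cdot)\rangle\big),
\]
and the right-hand side is precisely $\langle\theta(v),\omega\rangle$ by \eqref{eq:def_theta}. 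Since $\omega$ was arbitrary, the two homomorphisms agree.

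Granting the factorisation, the conclusion is immediate: we may write $J_{\Gamma_p(\mathbf E)}=({\rm I}^{\rm ad})^{-1}\circ\theta$, which displays the James embedding as a composition of two isomorphisms. Hence $J_{\Gamma_p(\mathbf E)}$ is itself an isomorphism, in particular surjective, so $\Gamma_p(\mathbf E)$ is reflexive as a module and therefore reflexive as a Banach space. As this argument applies to each fixed exponent, the statement follows for all $p\in(1,\infty)$.

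I do not anticipate a genuine obstacle at this stage, since the substantive analytic work has already been carried out in Theorem \ref{thm:char_dual} and Proposition \ref{prop:partial_reflex_impl} (the latter being where reflexivity of the fibers is actually used, through the identification $\mathbf E(x)\cong\mathbf E(x)''$). The only delicate points are the bookkeeping of the several module duality pairings -- all valued in $L^1(\mm)$ and taken in the $L^\infty(\mm)$-linear sense -- and the standard fact, recalled in the preliminaries, that the adjoint of an isomorphism of modules is again an isomorphism.
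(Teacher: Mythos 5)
Your proposal is correct and follows essentially the same route as the paper: both proofs establish the commutativity of the triangle formed by \(J_{\Gamma_p(\mathbf E)}\), \(\theta\), and the duality isomorphism of Theorem \ref{thm:char_dual}, by unwrapping the defining identities \eqref{eq:def_ad}, \eqref{eq:def_James}, \eqref{eq:def_I}, \eqref{eq:def_theta}. The only cosmetic difference is that the paper takes the adjoint of \({\rm J}={\rm I}^{-1}\) and proves \({\rm J}^{\rm ad}\circ\theta=J_{\Gamma_p(\mathbf E)}\), whereas you take the adjoint of \({\rm I}\) itself and prove \({\rm I}^{\rm ad}\circ J_{\Gamma_p(\mathbf E)}=\theta\); since \(({\rm I}^{-1})^{\rm ad}=({\rm I}^{\rm ad})^{-1}\), the two factorisations are equivalent.
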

\begin{proof}
\ \\
{\color{blue}\textsc{Necessity.}}
Suppose that \(\mathbf E\) is a reflexive bundle and fix any exponent \(p\in(1,\infty)\).
We call \({\rm I}\colon\Gamma_q(\mathbf E'_{w^*})\to\Gamma_p(\mathbf E)^*\) the isomorphism provided by Theorem \ref{thm:char_dual}.
Call \({\rm J}\colon\Gamma_p(\mathbf E)^*\to\Gamma_q(\mathbf E'_{w^*})\) its inverse and consider the adjoint
\({\rm J}^{\rm ad}\colon\Gamma_q(\mathbf E'_{w^*})^*\to\Gamma_p(\mathbf E)^{**}\) of the isomorphism \(\rm J\). Moreover, let
\(\theta\colon\Gamma_p(\mathbf E)\to\Gamma_q(\mathbf E'_{w^*})^*\) be the isomorphism given by Proposition \ref{prop:partial_reflex_impl}.
By unwrapping the various definitions, it can be readily checked that
\[\begin{tikzcd}
\Gamma_p(\mathbf E) \arrow[r,"\theta"] \arrow[rd,swap,"J_{\Gamma_p(\mathbf E)}"]
& \Gamma_q(\mathbf E'_{w^*})^* \arrow[d,"{\rm J}^{\rm ad}"] \\ &\Gamma_p(\mathbf E)^{**}
\end{tikzcd}\]
is a commutative diagram. Indeed, let us fix any \(v=\pi_\mm(\bar v)\in\Gamma_p(\mathbf E)\) and \(T\in\Gamma_p(\mathbf E)^*\).
Also, define \(\omega\coloneqq{\rm J}(T)\in\Gamma_q(\mathbf E'_{w^*})\) and pick a representative \(\bar\omega\in\bar\Gamma_0(\mathbf E'_{w^*})\)
of \(\omega\). Then we have that
\[\begin{split}
\langle({\rm J}^{\rm ad}\circ\theta)(v),T\rangle&\overset{\eqref{eq:def_ad}}=\langle\theta(v),{\rm J}(T)\rangle
=\langle\theta(v),\omega\rangle\overset{\eqref{eq:def_theta}}=\pi_\mm\big(\langle\bar\omega(\cdot),\bar v(\cdot)\rangle\big)
\overset{\eqref{eq:def_I}}=\langle{\rm I}(\omega),v\rangle\\
&\overset{\phantom{\eqref{eq:def_ad}}}=\langle T,v\rangle\overset{\eqref{eq:def_James}}=\langle J_{\Gamma_p(\mathbf E)}(v),T\rangle,
\end{split}\]
yielding \({\rm J}^{\rm ad}\circ\theta=J_{\Gamma_p(\mathbf E)}\). Therefore, \(J_{\Gamma_p(\mathbf E)}\) is an isomorphism and thus
\(\Gamma_p(\mathbf E)\) is reflexive.\\
{\color{blue}\textsc{Sufficiency.}}
Suppose that \(\Gamma_p(\mathbf{E})\) is reflexive for some \(p\in(1,\infty)\). Using Proposition \ref{prop:spanning_sects}, we obtain
a countable family \(Z\subseteq\bar\Gamma_\infty({\bf E})\) such that \(\|v(x)\|_\B\leq 1\) for every \((v,x)\in Z\times\X\) and
\[
\big\{v(x)\;\big|\;v\in Z\big\}\text{ is dense in }B_{{\bf E}(x)},\quad\text{ for every }x\in\X.
\]
We equip \(Z\) with the discrete topology and \(Z^\N\) with the product topology. Then \(Z^\N\) is a Polish space (\emph{i.e.},
a metrisable space whose topology is induced by a complete, separable distance), which is homeomorphic to the Baire space \(\N^\N\)
(see, for example, \cite[Section 3.14]{AliprantisBorder99}). We define \(\boldsymbol\varphi\colon\X\twoheadrightarrow Z^\N\) as
\(\boldsymbol\varphi(x)\coloneqq\big\{\boldsymbol v\in Z^\N\;\big|\;(\boldsymbol v,x)\in H\big\}\) for every \(x\in\X\), where we set
\[
H\coloneqq\bigcap_{(\boldsymbol q,\boldsymbol r)\in\mathscr F}\Bigg\{(\boldsymbol v,x)\in Z^\N\times\X
\;\Bigg|\;\bigg\|\sum_{i\in\N}q_i v_i(x)-\sum_{i\in\N}r_i v_i(x)\bigg\|_\B\geq\frac{1}{2}\Bigg\}.
\]
Recalling that a base for the topology of \(Z^\N\) is given by those sets of the form
\[
\{v_1\}\times\dots\times\{v_n\}\times Z\times Z\times\dots,\quad\text{ with }n\in\N\text{ and }v_1,\ldots,v_n\in Z,
\]
one can readily check that \(\boldsymbol\varphi\) is a weakly measurable map from \(\X\) to \(Z^\N\) having closed values.

We now argue by contradiction: suppose that there exists \(P\in\Sigma\) with \(0<\mm(P)<+\infty\) such that \({\bf E}(x)\)
is not reflexive for every \(x\in P\). Applying Lemma \ref{lem:equiv_not_reflex} to each \({\bf E}(x)\) with \(x\in P\),
we deduce that \(\boldsymbol\varphi(x)\neq\varnothing\) for every \(x\in P\). Thanks to the Kuratowski--Ryll-Nardzewski Selection Theorem
(see, \emph{e.g.}, \cite[Theorem 18.13]{AliprantisBorder99}), we can find a measurable maping \(V\colon P\to Z^\N\) such that
\(V(x)\in\boldsymbol\varphi(x)\) for every \(x\in P\). For any \(i\in\N\), we denote by \(\pi_i\colon Z^\N\to Z\) the projection onto the
\(i\)-th component, which is continuous by definition of the product topology. Then \(\pi_i\circ V\colon P\to Z\) is measurable, so that
\(P^i_v\coloneqq(\pi_i\circ V)^{-1}(\{v\})\in\Sigma\) for every \(v\in Z\) and \((P^i_v)_{v\in Z}\) is a partition of \(P\).
Given any \(i\in\N\), we define \(\bar v_i\colon\X\to\B\) as
\[
\bar v_i(x)\coloneqq\frac{(\pi_i\circ V)(x)(x)}{\mm(P)^{1/p}}\in{\bf E}(x),\quad\text{ for every }x\in P,
\]
and \(\bar v_i(x)\coloneqq 0_\B\) for all \(x\in\X\setminus P\). Note that \(\bar v_i(x)=\sum_{v\in Z}\mm(P)^{-1/p}\1_{P^i_v}(x)v(x)\)
for all \(x\in P\), thus in particular \(\bar v_i\in\bar\Gamma_\infty({\bf E})\cap\bar\Gamma_p({\bf E})\) and
\(\|\pi_\mm(\bar v_i)\|_{\Gamma_p({\bf E})}\leq 1\). Observe also that it holds
\begin{equation}\label{eq:impl_2_reflex_aux}
\bigg\|\sum_{i\in\N}q_i\bar v_i(x)-\sum_{i\in\N}r_i\bar v_i(x)\bigg\|_\B\geq\frac{1}{2\mm(P)^{1/p}},
\quad\text{ for every }(\boldsymbol q,\boldsymbol r)\in\mathscr F\text{ and }x\in P.
\end{equation}
Hence, denoting by \(v_i\in\Gamma_p({\bf E})\) the equivalence class of \(\bar v_i\),
for any \((\boldsymbol q,\boldsymbol r)\in\mathscr F\) we can estimate
\[\begin{split}
\bigg\|\sum_{i\in\N}q_i v_i-\sum_{i\in\N}r_i v_i\bigg\|_{\Gamma_p({\bf E})}=
\bigg(\int_P\bigg\|\sum_{i\in\N}q_i\bar v_i(x)-\sum_{i\in\N}r_i\bar v_i(x)\bigg\|_\B^p\,\d\mm(x)\bigg)^{1/p}
\overset{\eqref{eq:impl_2_reflex_aux}}\geq\frac{1}{2}.
\end{split}\]
Using Lemma \ref{lem:equiv_not_reflex} again, we deduce that \(\Gamma_p({\bf E})\) is not reflexive, leading to a contradiction.
\end{proof}
\begin{remark}\label{rmk:on_proof_refl}{\rm
As we already mentioned in the Introduction, the proof of the sufficiency part of Theorem \ref{thm:reflex_impl}
follows along the lines sketched in the proof of \cite[Theorem 6.19]{HLR91}. On the other hand, the proof of the
necessity part is different from the one of \cite[Theorem 6.19]{HLR91}, and in particular it avoids the use of
Rosenthal's \(\ell^1\)-Theorem.
\fr}\end{remark}
\appendix
\section{A criterion to detect Banach modules}\label{app:criterion_nmod}
Aim of this appendix is to address the following problem: \emph{given a module \(\mathscr M\) over \(L^\infty(\mm)\), can we characterise those complete norms on
\(\mathscr M\) that come from an \(L^p(\mm)\)-pointwise norm?} We will provide a positive answer to this question in Theorem \ref{thm:when_norm_ind_ptwse_norm} below.
\medskip

First, we recall a well-known, elementary result concerning Radon--Nikod\'{y}m derivatives. We report its proof for the reader's usefulness.
\begin{lemma}\label{lem:ineq_meas}
Let \((\X,\Sigma)\) be a measurable space. Let \(\mm,\mu_1,\mu_2,\mu_3\) be \(\sigma\)-finite measures on \(\Sigma\) such that \(\mu_1,\mu_2,\mu_3\ll\mm\).
Let \(\alpha\in(0,+\infty)\) be given. Suppose that
\begin{equation}\label{eq:ineq_meas_cl1}
\mu_1(E)^\alpha\leq\mu_2(E)^\alpha+\mu_3(E)^\alpha,\quad\text{ for every }E\in\Sigma.
\end{equation}
Then it holds that
\begin{equation}\label{eq:ineq_meas_cl2}
\bigg(\frac{\d\mu_1}{\d\mm}\bigg)^\alpha\leq\bigg(\frac{\d\mu_2}{\d\mm}\bigg)^\alpha+\bigg(\frac{\d\mu_3}{\d\mm}\bigg)^\alpha,\quad\text{ in the }\mm\text{-a.e.\ sense.}
\end{equation}
\end{lemma}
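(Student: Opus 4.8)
The plan is to argue by contradiction via a rational discretisation, exploiting the strict monotonicity of $t\mapsto t^\alpha$ on $[0,+\infty)$. Write $f_i\coloneqq\frac{\d\mu_i}{\d\mm}$ for $i=1,2,3$, so that $\mu_i(E)=\int_E f_i\,\d\mm$ for every $E\in\Sigma$, and suppose towards a contradiction that the conclusion \eqref{eq:ineq_meas_cl2} fails. Then the set $A\coloneqq\{f_1^\alpha>f_2^\alpha+f_3^\alpha\}$ has positive $\mm$-measure, and by $\sigma$-finiteness of $\mm$ we may intersect $A$ with a set of finite measure, so that we may assume $0<\mm(A)<+\infty$.

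First I would discretise the strict pointwise inequality on $A$ using rational thresholds. For each $x\in A$ one has $f_1(x)^\alpha>f_2(x)^\alpha+f_3(x)^\alpha\geq 0$, and since the map $(a,b,c)\mapsto a^\alpha-b^\alpha-c^\alpha$ is continuous on $[0,+\infty)^3$, one can select rationals $0<a<f_1(x)$, $b>f_2(x)$, $c>f_3(x)$ for which the strict inequality $a^\alpha>b^\alpha+c^\alpha$ persists. Consequently $A$ is covered by the countable family of sets $A_{a,b,c}\coloneqq\{f_1>a\}\cap\{f_2<b\}\cap\{f_3<c\}$, where $(a,b,c)$ ranges over the rational triples with $a^\alpha>b^\alpha+c^\alpha$. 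As $\mm(A)>0$, at least one such set $B\coloneqq A_{a,b,c}$ must satisfy $\mm(B)>0$, and also $\mm(B)<+\infty$ after the reduction above.

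Next I would test the hypothesis \eqref{eq:ineq_meas_cl1} against this single set $E=B$. From the defining inequalities of $B$ one reads off $\mu_1(B)\geq a\,\mm(B)$, $\mu_2(B)\leq b\,\mm(B)$, and $\mu_3(B)\leq c\,\mm(B)$; applying the monotone map $t\mapsto t^\alpha$ to these bounds and inserting \eqref{eq:ineq_meas_cl1} gives
\[
\big(a\,\mm(B)\big)^\alpha\leq\mu_1(B)^\alpha\leq\mu_2(B)^\alpha+\mu_3(B)^\alpha\leq\big(b\,\mm(B)\big)^\alpha+\big(c\,\mm(B)\big)^\alpha.
\]
Since $0<\mm(B)<+\infty$, dividing through by $\mm(B)^\alpha$ yields $a^\alpha\leq b^\alpha+c^\alpha$, contradicting the choice of the triple $(a,b,c)$. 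This contradiction establishes \eqref{eq:ineq_meas_cl2}.

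The argument presents no serious analytic obstacle: its whole point is that the averages $\mu_i(B)/\mm(B)$ trap the densities between the chosen rational bounds on $B$, which is exactly the elementary substitute for the Lebesgue differentiation argument one would invoke over $\R^n$. The only steps requiring a little care are the verification that the rational approximation can be made to preserve the \emph{strict} inequality (guaranteed by continuity) and the reduction to a set $B$ of finite positive measure, which is where $\sigma$-finiteness of $\mm$ is used and which is what makes the final division by $\mm(B)^\alpha$ legitimate.
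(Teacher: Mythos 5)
Your argument is correct, up to one cosmetic repair: the set \(A_{a,b,c}\) itself need not have finite measure, so the test set should be \(B\coloneqq A\cap A_{a,b,c}\), with \(A\) already reduced to finite positive measure; the three defining inequalities \(f_1>a\), \(f_2<b\), \(f_3<c\) still hold on this intersection, so the computation goes through verbatim. This is evidently what you intend by ``after the reduction above'', but it should be said.

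Structurally your route differs from the paper's, although both rest on the same mechanism: testing the hypothesis on sets where all three densities are trapped in small windows, so that the inequality between measures transfers to the density values. The paper argues directly: for each \(k\in\N\) it builds a countable partition \((E_i)_{i\in\N}\) of \(\X\) into finite-measure sets on which each \(f_j\) oscillates by at most \(1/k\), observes that the averages \(\fint_{E_i}f_j\,\d\mm=\mu_j(E_i)/\mm(E_i)\) satisfy the \(\alpha\)-power inequality by the hypothesis applied to \(E=E_i\), deduces the \(\mm\)-a.e.\ bound \((f_1-1/k)^\alpha\leq(f_2+1/k)^\alpha+(f_3+1/k)^\alpha\), and lets \(k\to\infty\). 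You instead argue by contradiction and replace the partitions-plus-limit scheme with a single well-chosen test set extracted from a countable rational cover of the bad set: since the rational triple \((a,b,c)\) is chosen so that the strict inequality \(a^\alpha>b^\alpha+c^\alpha\) survives the approximation, no limiting process is needed at all. What your approach buys is economy --- one test set, no limit, no need to control the oscillation of all three functions simultaneously. What the paper's approach buys is a direct (non-contradiction) proof producing a quantitative a.e.\ estimate at every scale \(1/k\), via the same elementary partitioning idea that substitutes for Lebesgue differentiation elsewhere in the paper. Both proofs use the \(\sigma\)-finiteness of \(\mm\) in the same way, namely to manufacture test sets of finite positive measure so that the division by \(\mm(B)^\alpha\) (respectively \(\mm(E_i)^\alpha\)) is legitimate.
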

\begin{proof}
Let us denote \(f_j\coloneqq\frac{\d\mu_j}{\d\mm}\) for \(j=1,2,3\). Let \(k\in\N\) be fixed. By using the \(\sigma\)-finiteness of \(\mm\),
we can find a partition \((E_i)_{i\in\N}\subseteq\Sigma\) such that \(0<\mm(E_i)<+\infty\) for every \(i\in\N\) and
\begin{equation}\label{eq:ineq_meas_aux1}
\big|f_j(x)-f_j(y)\big|\leq\frac{1}{k},\quad\text{ for every }i\in\N,\,j=1,2,3,\text{ and }\mm\text{-a.e.\ }x,y\in E_i.
\end{equation}
Define \(\lambda_{ij}\coloneqq\fint_{E_i}f_j\,\d\mm\) for every \(i\in\N\) and \(j=1,2,3\). Observe that \eqref{eq:ineq_meas_aux1} ensures that
\begin{equation}\label{eq:ineq_meas_aux2}
\big|f_j(x)-\lambda_{ij}\big|\leq\frac{1}{k},\quad\text{ for every }i\in\N,\,j=1,2,3,\text{ and }\mm\text{-a.e.\ }x\in E_i.
\end{equation}
Given that \(\int_{E_i}f_j\,\d\mm=\int_{E_i}\frac{\d\mu_j}{\d\mm}\,\d\mm=\mu_j(E_i)\), we deduce that
\begin{equation}\label{eq:ineq_meas_aux3}
\lambda_{i1}^\alpha=\frac{\mu_1(E_i)^\alpha}{\mm(E_i)^\alpha}\overset{\eqref{eq:ineq_meas_cl1}}\leq
\frac{\mu_2(E_i)^\alpha}{\mm(E_i)^\alpha}+\frac{\mu_3(E_i)^\alpha}{\mm(E_i)^\alpha}=\lambda_{i2}^\alpha+\lambda_{i3}^\alpha,\quad\text{ for every }i\in\N.
\end{equation}
Hence, by combining \eqref{eq:ineq_meas_aux2} with \eqref{eq:ineq_meas_aux3} we see that for every \(i\in\N\) and \(\mm\)-a.e.\ \(x\in E_i\) it holds
\[
\bigg(f_1(x)-\frac{1}{k}\bigg)^\alpha\leq\lambda_{i1}^\alpha\leq\lambda_{i2}^\alpha+\lambda_{i3}^\alpha
\leq\bigg(f_2(x)+\frac{1}{k}\bigg)^\alpha+\bigg(f_3(x)+\frac{1}{k}\bigg)^\alpha.
\]
By arbitrariness of \(i,k\in\N\), we conclude that \(f_1^\alpha\leq f_2^\alpha+f_3^\alpha\) holds \(\mm\)-a.e., yielding \eqref{eq:ineq_meas_cl2}.
\end{proof}
We are in a position to characterise which complete norms \(\|\cdot\|\) on an \(L^\infty(\mm)\)-module \(\mathscr M\) are induced by an
\(L^p(\mm)\)-pointwise norm. Roughly speaking, the required compatibility between the norm and the module structure is expressed via
two conditions, labelled \(2\rm a)\) and \(2\rm b)\): the former relates the given norm with the multiplication by \(L^\infty(\mm)\)-functions
and the chosen exponent \(p\), while the latter is a weak continuity assumption on the multiplication operator.
\begin{theorem}[When a norm is induced by a pointwise norm]\label{thm:when_norm_ind_ptwse_norm}
Let \((\X,\Sigma,\mm)\) be a \(\sigma\)-finite measure space. Let \(\mathscr M\) be a module over the ring \(L^\infty(\mm)\) and \(\|\cdot\|\)
a complete norm on \(\mathscr M\). Let \(p\in[1,\infty)\) be a given exponent. Then the following two conditions are equivalent:
\begin{itemize}
\item[\(1)\)] There exists an \(L^p(\mm)\)-pointwise norm operator \(|\cdot|\colon\mathscr M\to L^p(\mm)\) on \(\mathscr M\) such that
\[
\|v\|=\big\||v|\big\|_{L^p(\mm)},\quad\text{ for every }v\in\mathscr M.
\]
\item[\(2)\)] The following two properties are satisfied:
\begin{itemize}
\item[\(2\rm a)\)] It holds that \(\|\1_E\cdot v\|^p+\|\1_{\X\setminus E}\cdot v\|^p=\|v\|^p\) for every \(E\in\Sigma\) and \(v\in\mathscr M\).
\item[\(2\rm b)\)] It holds \(\lim_{n\to\infty}\|f_n\cdot v\|=0\) for every \(v\in\mathscr M\) and for every \((f_n)_{n\in\N}\subseteq L^\infty(\mm)\)
such that \(f_n\rightharpoonup 0\) weakly\(^*\) in \(L^\infty(\mm)\) as \(n\to\infty\).
\end{itemize}
\end{itemize}
\end{theorem}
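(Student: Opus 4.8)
The plan is to treat the two implications separately, with essentially all the substance concentrated in $(2)\Rightarrow(1)$. The guiding idea for the difficult direction is to attach to each $v\in\mathscr M$ the set function $\mu_v(E)\coloneqq\|\1_E\cdot v\|^p$, to promote it to a genuine measure on $\Sigma$, and to recover the sought pointwise norm as the $p$-th root of its Radon--Nikod\'{y}m density with respect to $\mm$. Conditions $2\rm a)$ and $2\rm b)$ are precisely what is needed to make $\mu_v$ finitely additive and countably additive, respectively, while the triangle inequality for $|\cdot|$ will be extracted verbatim from Lemma \ref{lem:ineq_meas}.

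For $(1)\Rightarrow(2)$, which I regard as routine, I would argue as follows. Given the pointwise norm $|\cdot|$, property \eqref{eq:ptwse_norm3} yields $|\1_E\cdot v|=\1_E|v|$, hence $\|\1_E\cdot v\|^p=\int_E|v|^p\,\d\mm$; additivity of the integral over the partition $\{E,\X\setminus E\}$ gives $2\rm a)$ at once. For $2\rm b)$ I would again invoke \eqref{eq:ptwse_norm3} to write $\|f_n\cdot v\|^p=\int|f_n|^p|v|^p\,\d\mm$, and then deduce $\|f_n\cdot v\|\to 0$ from the boundedness of weakly\(^*\)-null sequences in $L^\infty(\mm)$ together with the pairing of $f_n$ against $|v|^p\in L^1(\mm)$.

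The heart of the argument is $(2)\Rightarrow(1)$. First I would fix $v\in\mathscr M$ and show that $\mu_v(E)\coloneqq\|\1_E\cdot v\|^p$ defines a finite, nonnegative, countably additive measure on $\Sigma$ with $\mu_v\ll\mm$. Finite additivity on disjoint $E,F$ follows by applying $2\rm a)$ to the element $\1_{E\cup F}\cdot v$ and using $\1_E\1_{E\cup F}=\1_E$ and $\1_{\X\setminus E}\1_{E\cup F}=\1_F$. Countable additivity is where $2\rm b)$ is indispensable: for $E_n\downarrow\varnothing$ one has $\1_{E_n}\rightharpoonup 0$ weakly\(^*\) in $L^\infty(\mm)$ (by dominated convergence, $\int_{E_n}g\,\d\mm\to 0$ for every $g\in L^1(\mm)$), whence $2\rm b)$ forces $\mu_v(E_n)=\|\1_{E_n}\cdot v\|^p\to 0$, i.e.\ continuity from above at $\varnothing$. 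Absolute continuity is immediate, since $\mm(E)=0$ makes $\1_E=0$ in $L^\infty(\mm)$ and thus $\1_E\cdot v=0$. By Radon--Nikod\'{y}m I would then set $|v|\coloneqq(\d\mu_v/\d\mm)^{1/p}$, which lies in $L^p(\mm)$ because $\int|v|^p\,\d\mm=\mu_v(\X)=\|v\|^p$; this identity is exactly the required compatibility between $\|\cdot\|$ and $|\cdot|$, and it also delivers the nondegeneracy \eqref{eq:ptwse_norm1}.

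It then remains to check the two algebraic properties of a pointwise norm. For homogeneity \eqref{eq:ptwse_norm3}, I would compute $\d\mu_{f\cdot v}/\d\mm=|f|^p\,\d\mu_v/\d\mm$ first for $f=\1_A$ (using $\mu_{\1_A\cdot v}(E)=\mu_v(E\cap A)$) and for constants (using scalar homogeneity of $\|\cdot\|$), then for simple $f$ via finite additivity of $\mu_{f\cdot v}$ across the level sets of $f$, and finally for arbitrary $f\in L^\infty(\mm)$ by approximating uniformly with simple functions $f_k$, where $2\rm b)$ guarantees $\|(f_k-f)\cdot v\|\to 0$ and hence $\mu_{f_k\cdot v}(E)\to\mu_{f\cdot v}(E)$ (combined with dominated convergence on the density side). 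The most elegant point is the triangle inequality \eqref{eq:ptwse_norm2}: the ordinary triangle inequality in $\mathscr M$ applied to $\1_E\cdot v$ and $\1_E\cdot w$ gives $\mu_{v+w}(E)^{1/p}\le\mu_v(E)^{1/p}+\mu_w(E)^{1/p}$ for every $E\in\Sigma$, which is exactly hypothesis \eqref{eq:ineq_meas_cl1} of Lemma \ref{lem:ineq_meas} with $\alpha=1/p$, and its conclusion \eqref{eq:ineq_meas_cl2} reads precisely $|v+w|\le|v|+|w|$ $\mm$-a.e. I expect the main subtlety to lie in the two essential uses of $2\rm b)$---the countable additivity of $\mu_v$ and the passage from simple to general $f$ in \eqref{eq:ptwse_norm3}---whereas the triangle inequality reduces cleanly to the already-available Radon--Nikod\'{y}m lemma.
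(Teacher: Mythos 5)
Your proposal reproduces the paper's own proof essentially step for step: the same set function \(\mu_v(E)=\|\1_E\cdot v\|^p\), promoted to a finite measure with \(\mu_v\ll\mm\), the same definition \(|v|\coloneqq(\d\mu_v/\d\mm)^{1/p}\) via Radon--Nikod\'{y}m, the same appeal to Lemma \ref{lem:ineq_meas} with \(\alpha=1/p\) for the pointwise triangle inequality, and the same simple-function approximation closed by \(2\rm b)\) for the homogeneity \(|f\cdot v|=|f||v|\). The only differences are cosmetic: you get \(\sigma\)-additivity from continuity from above at \(\varnothing\) (via \(\1_{E_n}\rightharpoonup 0\) for \(E_n\downarrow\varnothing\)), where the paper telescopes \(2\rm a)\) over a disjoint union and applies \(2\rm b)\) to the tails \(\1_{E\setminus E'_n}\); and you stage homogeneity through indicators and constants before simple functions, which the paper treats in a single computation.

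One remark, which concerns the paper's argument exactly as much as yours: in \(1)\Rightarrow 2\rm b)\), pairing \(f_n\) against \(|v|^p\in L^1(\mm)\) yields \(\int f_n|v|^p\,\d\mm\to 0\), but the estimate \(\|f_n\cdot v\|^p\leq M^{p-1}\int|f_n|\,|v|^p\,\d\mm\) requires \(\int|f_n|\,|v|^p\,\d\mm\to 0\), and weak\(^*\) convergence of \(f_n\) does not imply weak\(^*\) convergence of \(|f_n|\). Indeed, for \(\mathscr M=L^p(0,2\pi)\), \(v=\1\), and \(f_n(x)=\sin(nx)\), one has \(f_n\rightharpoonup 0\) weakly\(^*\) while \(\|f_n\cdot v\|_{L^p}\) is a positive constant independent of \(n\), so condition \(2\rm b)\) as literally stated fails even in this basic example; the statement (and both proofs) would be repaired by formulating \(2\rm b)\) for uniformly bounded sequences converging to \(0\) in measure or \(\mm\)-a.e., which is all that the implication \(2)\Rightarrow 1)\) ever uses. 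Since you wrote your sketch blind and it coincides with the paper's step, this is a defect inherited from the paper's formulation, not a gap of your proposal relative to the paper.
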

\begin{proof}
\ \\
{\color{blue}\(1)\Longrightarrow 2)\).} Suppose \(1)\) holds. Let us prove that \(2\rm a)\) is satisfied. Fix \(E\in\Sigma\) and \(v\in\mathscr M\). Then
\[
\|\1_E\cdot v\|^p+\|\1_{\X\setminus E}\cdot v\|^p=\int_E|v|^p\,\d\mm+\int_{\X\setminus E}|v|^p\,\d\mm=\int|v|^p\,\d\mm=\|v\|^p,
\]
thus \(2\rm a)\) holds. To prove \(2\rm b)\), fix any sequence \((f_n)_{n\in\N}\subseteq L^\infty(\mm)\) such that \(f_n\rightharpoonup 0\) weakly\(^*\)
in \(L^\infty(\mm)\). This yields \(M\coloneqq\sup_n\|f_n\|_{L^\infty(\mm)}<+\infty\). Therefore, since \(|v|^p\in L^1(\mm)\), we have
\[
\lims_{n\to\infty}\|f_n\cdot v\|=\lims_{n\to\infty}\bigg(\int|f_n\cdot v|^p\,\d\mm\bigg)^{1/p}\leq M^{(p-1)/p}\lim_{n\to\infty}\bigg(\int|f_n||v|^p\,\d\mm\bigg)^{1/p}=0,
\]
thus \(2\rm b)\) holds. All in all, \(2)\) is proven.\\
{\color{blue}\(2)\Longrightarrow 1)\).} Suppose \(2)\) holds. First of all, we claim that for any \(v\in\mathscr M\) one has that
\begin{equation}\label{eq:when_norm_ind_ptwse_norm_aux1}
\|\1_E\cdot v\|^p=\sum_{n\in\N}\|\1_{E_n}\cdot v\|^p,\quad\text{ if }(E_n)_{n\in\N}\subseteq\Sigma\text{ are pairwise disjoint and }E\coloneqq\bigcup_{n\in\N}E_n.
\end{equation}
In order to prove it, denote \(E'_n\coloneqq\bigcup_{i=1}^n E_i\) for every \(n\in\N\) and notice that \(\1_{E\setminus E'_n}\rightharpoonup 0\) weakly\(^*\)
in \(L^\infty(\mm)\) as \(n\to\infty\). By repeatedly applying \(2{\rm a})\), we obtain for any \(n\in\N\) that
\[
\|\1_E\cdot v\|^p=\|\1_{E_1}\cdot v\|^p+\|\1_{E\setminus E_1}\cdot v\|^p=\ldots=\sum_{i=1}^n\|\1_{E_i}\cdot v\|^p+\|\1_{E\setminus E'_n}\cdot v\|^p,
\]
whence by letting \(n\to\infty\) and using \(2{\rm b})\) we conclude that the identity in \eqref{eq:when_norm_ind_ptwse_norm_aux1} is verified.

Given any element \(v\in\mathscr M\), we define the set-function \(\mu_v\colon\Sigma\to[0,+\infty]\) as
\[
\mu_v(E)\coloneqq\|\1_E\cdot v\|^p,\quad\text{ for every }E\in\Sigma.
\]
It follows from \eqref{eq:when_norm_ind_ptwse_norm_aux1} that \(\mu_v\) is \(\sigma\)-additive. Given any \(N\in\Sigma\) with \(\mm(N)=0\), it holds
\(\1_N=0\) as elements of \(L^\infty(\mm)\), thus \(\mu_v(N)=\|0\cdot v\|^p=0\). Moreover, \(\mu_v(\X)=\|v\|^p<+\infty\). All in all, we have proven
that \(\mu_v\) is a finite measure on \(\Sigma\) satisfying \(\mu_v\ll\mm\). Hence, we can define
\[
|v|\coloneqq\bigg(\frac{\d\mu_v}{\d\mm}\bigg)^{1/p}\in L^p(\mm),\quad\text{ for every }v\in\mathscr M.
\]
Observe that \(\int|v|^p\,\d\mm=\mu_v(\X)=\|v\|^p\), thus in order to conclude it only remains to show that \(|\cdot|\colon\mathscr M\to L^p(\mm)\)
is a pointwise norm operator. Trivially, \(|v|=0\) holds \(\mm\)-a.e.\ if and only if \(v=0\). The \(\mm\)-a.e.\ inequality \(|v+w|\leq|v|+|w|\)
stems from Lemma \ref{lem:ineq_meas}: for \(E\in\Sigma\) we have
\[
\mu_{v+w}(E)^{1/p}=\big\|\1_E\cdot(v+w)\big\|=\big\|\1_E\cdot v+\1_E\cdot w\big\|\leq\|\1_E\cdot v\|+\|\1_E\cdot w\|=\mu_v(E)^{1/p}+\mu_w(E)^{1/p},
\]
thus Lemma \ref{lem:ineq_meas} ensures that \(|v+w|\leq|v|+|w|\) holds \(\mm\)-a.e.\ on \(\X\). Finally, we claim that
\begin{equation}\label{eq:when_norm_ind_ptwse_norm_aux2}
|f\cdot v|=|f||v|,\quad\text{ holds }\mm\text{-a.e.\ on }\X
\end{equation}
for every \(f\in L^\infty(\mm)\) and \(v\in\mathscr M\). Let us first prove it in the case where \(f\) is a simple function, namely,
\(f=\sum_{i=1}^n\lambda_i\,\1_{E_i}\) for some \(\lambda_1,\ldots,\lambda_n\in\R\) and pairwise disjoint sets \(E_1,\ldots,E_n\in\Sigma\).
To this aim, notice that for any set \(F\in\Sigma\) the following identities are satisfied:
\[\begin{split}
\int_F|f\cdot v|^p\,\d\mm&=\sum_{i=1}^n\int_{F\cap E_i}|f\cdot v|^p\,\d\mm=\sum_{i=1}^n\mu_{f\cdot v}(F\cap E_i)=\sum_{i=1}^n\big\|\1_{F\cap E_i}\cdot(f\cdot v)\big\|^p\\
&=\sum_{i=1}^n\big\|\lambda_i(\1_{F\cap E_i}\cdot v)\big\|^p=\sum_{i=1}^n|\lambda_i|^p\|\1_{F\cap E_i}\cdot v\|^p=\sum_{i=1}^n|\lambda_i|^p\int_F\1_{E_i}|v|^p\,\d\mm\\
&=\int_F|f|^p|v|^p\,\d\mm.
\end{split}\]
By arbitrariness of \(F\), we deduce that \eqref{eq:when_norm_ind_ptwse_norm_aux2} holds whenever \(f\) is a simple function. The general case follows by approximation:
given any \(f\in L^\infty(\mm)\), we can find a sequence \((f_n)_{n\in\N}\) of simple functions such that \(f_n\to f\) strongly in \(L^\infty(\mm)\) as \(n\to\infty\).
In particular, \(f_n\rightharpoonup f\) weakly\(^*\) in \(L^\infty(\mm)\), thus \(2{\rm b})\) yields
\[
\int\big||f_n\cdot v|-|f\cdot v|\big|^p\,\d\mm\leq\int\big|(f_n-f)\cdot v\big|^p\,\d\mm=\big\|(f_n-f)\cdot v\big\|^p\longrightarrow 0,\quad\text{ as }n\to\infty.
\]
Moreover, since \(|f_n|\to|f|\) in \(L^\infty(\mm)\), we have \(|f_n||v|\to|f||v|\) in \(L^p(\mm)\). Since we already know
that \(|f_n\cdot v|=|f_n||v|\) for all \(n\in\N\), we conclude that \(|f\cdot v|=\lim_n|f_n\cdot v|=\lim_n|f_n||v|=|f||v|\) strongly in \(L^p(\mm)\),
proving \eqref{eq:when_norm_ind_ptwse_norm_aux2}. Therefore, \(|\cdot|\) is a pointwise norm, whence \(1)\) follows.
\end{proof}
\def\cprime{$'$} \def\cprime{$'$}

\end{document}